\newcommand{\leqnomode}{\tagsleft@true\let\veqno\@@leqno}
\newcommand{\reqnomode}{\tagsleft@false\let\veqno\@@eqno}
\def\widebreve{\mathpalette\wide@breve}
\def\wide@breve#1#2{\sbox\z@{$#1#2$}%
     \mathop{\vbox{\m@th\ialign{##\crcr
\kern0.08em\brevefill#1{0.8\wd\z@}\crcr\noalign{\nointerlineskip}%
                    $\hss#1#2\hss$\crcr}}}\limits}
\def\brevefill#1#2{$\m@th\sbox\tw@{$#1($}%
  \hss\resizebox{#2}{\wd\tw@}{\rotatebox[origin=c]{90}{\upshape(}}\hss$}
\setlist[enumerate]{leftmargin=*,align=left,labelindent=\parindent}
\newcommand{\Bin}{\left\{ 0,1 \right\}}
\newcommand{\Ter}{\left\{ 0,1, 2\right\}}
\DeclareMathOperator{\imp}{\,\rightarrow\,}
\DeclareMathOperator{\defeqiv}{\stackrel{\textup{def}}{\iff}}
\DeclareMathOperator{\defeql}{\stackrel{\textup{def}}{\  =\  }}
\newcommand{\dotminus}{\mathbin{\ooalign{\hss\raise.5ex\hbox{$\cdot$}\hss\crcr$-$}}}
\newcommand{\Nat}{\mathbb{N}}
\newcommand{\FSeq}{\Nat^{*}}
\newcommand{\BSeq}{\Bin^{*}}
\newcommand{\TSeq}{\Ter^{*}}
\newcommand{\Baire}{\Nat^{\Nat}}
\newcommand{\Cantor}{\Bin^{\Nat}}
\newcommand{\CSet}{\mathbb{C}}
\newcommand{\TTree}{\Ter^{\Nat}}
\newcommand{\UInt}{[0,1]}
\newcommand{\RInt}{\mathbb{I}}
\newcommand{\Real}{\mathbb{R}}
\newcommand{\LoebReal}{\mathbb{R}_{\mathrm{S}}}
\newcommand{\Reg}{\lh{\UInt}}
\newcommand{\Rat}{\mathbb{Q}}
\newcommand{\Int}{\mathbb{Z}}
\newcommand{\RatInt}{\mathbb{T}}
\newcommand{\nil}{\langle\, \rangle}
\newcommand{\seq}[1]{\langle#1\rangle}
\newcommand{\lh}[1]{\lvert #1 \rvert}
\newcommand{\fst}[1]{{#1}'}
\newcommand{\snd}[1]{{#1}''}
\newcommand{\qfAC}{\mathrm{QF\text{-}\mathrm{AC}}}
\newcommand{\CFAN}{\textrm{\textup{CFT}}}
\newcommand{\DFAN}{\mathrm{DFT}}
\newcommand{\UCc}{\mathrm{UC_{c}}}
\newcommand{\UCTcC}{\mathrm{UCT_{c}}_{\BSeq}}
\newcommand{\UCTc}{\mathrm{UCT_{c}}}
\newcommand{\UCTcT}{\mathrm{UCT_{c}'}}
\newcommand{\UCT}{\mathrm{UCT}}
\DeclareMathOperator{\id}{\mathrm{id}}
\newcommand{\HAw}{\mathrm{HA}^{\omega}}
\newtheorem{theorem}{Theorem}[section]
\newtheorem{proposition}[theorem]{Proposition}
\newtheorem{lemma}[theorem]{Lemma}
\newtheorem{corollary}[theorem]{Corollary}
\theoremstyle{definition}
\newtheorem{definition}[theorem]{Definition}
\theoremstyle{remark}
\newtheorem{remark}[theorem]{Remark}
\newtheorem{notation}[theorem]{Notation}
 \numberwithin{equation}{section}
\title{Decidable fan theorem and uniform continuity theorem with continuous moduli}
\author{Makoto Fujiwara}
\affil{School of Science and Technology, Meiji University\authorcr
1-1-1 Higashi-Mita, Tama-ku, Kawasaki-shi, Kanagawa 214-8571, Japan\authorcr
\texttt{makotofujiwara@meiji.ac.jp}}
\author{Tatsuji Kawai}
\affil{Japan Advanced Institute of Science and Technology\authorcr
1-1 Asahidai, Nomi, Ishikawa 923-1292, Japan\authorcr
\texttt{tatsuji.kawai@jaist.ac.jp}}
\date{}
\begin{document}
\maketitle
\begin{abstract}
The uniform continuity theorem $(\UCT)$ states that every pointwise
continuous real-valued function on the unit interval is uniformly
continuous. In constructive mathematics, $\UCT$ is stronger than the
decidable fan theorem $(\DFAN)$; however, Loeb [Ann.\ Pure Appl.\
Logic, 132(1):51--66, 2005] has shown that the two principles become
equivalent with a suitable coding of ``continuous functions'' as
type-one objects. The question remains whether $\DFAN$
can be characterised by a weaker version of $\UCT$ using a natural
subclass of pointwise continuous functions without such a coding.
We show that when ``pointwise continuous'' is replaced with
``having a continuous modulus'', $\UCT$  becomes equivalent to
$\DFAN$. We also show that this weakening of $\UCT$ is equivalent to a
similar principle for real-valued functions on the Cantor space $\Cantor$.
These results extend Berger's characterisation of $\DFAN$ by
the similar principle for functions from $\Cantor$ to $\Nat$, and
unifies these characterisations of $\DFAN$ in terms of functions
having continuous moduli.
Furthermore, we directly show that the continuous real-valued functions on the
unit interval having  continuous moduli are exactly those functions which
admit the coding of ``continuous functions'' due to Loeb.
Our result allows us to interpret her work in the usual
context of mathematics.
\medskip

\noindent\textsl{Keywords:}
Constructive reverse mathematics;
Uniform continuity theorem;
Real-valued function;
Continuous modulus;
Fan theorem \\[3pt]
\noindent\textsl{MSC2010:}
03F60; 
26E40; 
03F55; 
03B30  
\end{abstract}
\section{Introduction}\label{sec:Introduction}

In 1927~\cite{BrouwerDomainsofFunctions}, Brouwer showed that every
real-valued function on the unit interval is uniformly continuous. As Brouwer emphasised, the crucial role was played by the intuitionistic principle called the \emph{fan theorem}, which has become a subject of intensive study in constructive reverse mathematics~\cite{ConstRevMatheCompactness}.\footnote{One may also
notice that the version of fan theorem used in his proof is the
decidable fan theorem.}

The focus of this paper is on the relation between the fan theorem and
 the uniform continuity of
real-valued functions. Brouwer's work hints that there is a  strong connection between
the two, but their precise relation remains somewhat subtle.
In this context, the most natural statement to look at is the
\emph{uniform continuity theorem}:
\begin{description}
 \item[($\UCT$)] 
  Every pointwise continuous function $f \colon \UInt \to \Real$ is uniformly
  continuous. 
\end{description}
Bridges and Diener~\cite{PseudoCompUIntEquivUCT} gave various analytic
statements equivalent to $\UCT$.\footnote{The uniform continuity
  theorem is constructively equivalent to
  an analogous statement where the domain of $f$ is replaced with
$\Cantor$ or an arbitrary compact metric space; see Bridges and
Diener~\cite[Theorem~10]{PseudoCompUIntEquivUCT}.}
However, the logical relation between $\UCT$
and the fan theorem is unsettled.
The principle $\UCT$ is weaker than the fan theorem
for monotone $\Pi^{0}_{1}$ bars and stronger than the continuous fan
theorem ($\CFAN$)~\cite{BergerUCandcFT}, but no fan theoretic
characterisation of $\UCT$ has been known (see Diener and
Loeb~\cite{DienerLoebSeqRealValonUInt} for a concise summary of
equivalents of various forms of fan theorem).

On the positive side, Loeb \cite{Loeb05} introduced a coding of
continuous functions and uniformly continuous functions from $\UInt$ to $\Real$ in the context of intuitionistic
second order arithmetic, and showed that $\UCT$ is equivalent to the
decidable fan theorem ($\DFAN$) with respect to the coding of
continuous functions.  In order to define continuous functions from
$\UInt$ to $\Real$ in the second order arithmetic where the type two
functionals are not available, she encodes a continuous function as a
type one function with certain properties. However, the encoding makes
it difficult to compare her version of $\UCT$ with the standard
version of $\UCT$. In particular, the question remains as to whether
we can characterise $\DFAN$ using more natural subclass of pointwise
continuous functions without such a coding. 

In this paper, we consider a natural strengthening of the notion of
pointwise continuity which makes $\UCT$ equivalent to $\DFAN$.
Specifically, we strengthen the notion of continuity for functions
from $\UInt$ to $\Real$ by equipping them with more information about
their moduli of pointwise continuity.
The starting point of our work is Berger's result \cite{BergerFANandUC} that $\DFAN$ is equivalent to
the following statement:
\begin{description}
\item[($\UCc$)] Every continuous function $f \colon \Cantor \to
  \Nat$ with a continuous modulus is uniformly continuous.
\end{description}
Here, a \emph{modulus} of $f \colon \Cantor \to \Nat$ is  a function
$g \colon \Cantor \to \Nat$ such that \begin{equation}
\label{eq:Modulus} \forall \alpha, \beta \in \Cantor \left 
( \overline{\alpha}g(\alpha) =  \overline{\beta}g(\alpha) \imp
f(\alpha) = f(\beta)\right), \end{equation} where
$\overline{\alpha}n$ denotes the initial segment of $\alpha$ of length
$n$. 
In Section~\ref{sec:RealValedFunctionsOnCantor}, we show that $\UCc$ is equivalent to the following principle for
real-valued continuous functions:
\begin{description}
\item[($\UCTcC$)] Every continuous function $f \colon \Cantor \to
  \Real$ with a continuous modulus is uniformly continuous.
\end{description}

The above equivalence suggests that for a suitable notion of
continuous modulus for functions from $\UInt$ to $\Real$,
the following statement becomes equivalent to $\DFAN$:
\begin{description}
\item[($\UCTc$)] Every continuous function $f \colon \UInt \to \Real$ with
  a continuous modulus is uniformly continuous.
\end{description}
There are several possible choices for the notion of continuous
modulus of a function of the type $\UInt \to \Real$.
Here, we consider
a modulus of pointwise continuity of $f \colon \UInt \to \Real$
to
be an \emph{operation}~\cite[Chapter~2, Section~1]{Bishop-67}, i.e., a
function which does not necessarily respect the
equality on the domain. Specifically, a \emph{modulus} of a function $f \colon \UInt \to \Real$ is a family of functions
$g_{k} \colon \UInt \to \Nat$ for each $k \in \Nat$ such that for any
$k \in \Nat$ and $x,y \in \UInt$, it holds that 
\[
  |x - y| \leq 2^{-g_{k}(x)} \imp |f(x) - f(y)| \leq 2^{-k}.
\]
More precisely, $g_{k} \colon \UInt \to \Nat$ is a function
from \emph{the underlying set of regular sequences in 
$\UInt$} (cf.\ Section \ref{sec:RegularSequences}), so it only needs to respect pointwise equality of regular
sequences. By identifying the underlying set of regular sequences with
a subset of $\Baire$, we define such a modulus to be \emph{continuous}
if each $g_{k} \colon \UInt \to \Nat$ is pointwise continuous with
respect to the topology on $\Baire$. 
See Section~\ref{sec:ContinuousModulus} for the details.

Using the above notion of continuous modulus, we show that $\UCTc$ is
equivalent to $\DFAN$ (Section~\ref{sec:EquivalenceDFANUCTc}).  The
non-trivial part is deriving $\DFAN$ from $\UCTc$. Here, as in the
related works~\cite{PseudoCompUIntEquivUCT,Loeb05,ConvexFan}, we use
the Cantor discontinuum to construct a real-valued function on $\UInt$
from a bar of the binary fan. Our construction is similar to those in
\cite{PseudoCompUIntEquivUCT,Loeb05}, and in particular to
\cite{PseudoCompUIntEquivUCT}. However, our explicit treatment of real
numbers as regular sequences allows us give a more concrete
construction without relying on the Bishop's lemma~\cite[Chapter~4,
Lemma~3.8]{Bishop-Bridges-85}, which requires the countable choice.

The question remains whether the notion of functions from $\UInt$ to
$\Real$ having continuous moduli and Loeb's notion of continuous
functions are equivalent. The following observation suggests that
the answer would be positive.
Loeb's encoding of continuous functions from $\UInt$ to $\Real$
can be considered as a neighbourhood function~\cite[Chapter 4, Section
6.8]{ConstMathI} of some function $f \colon \UInt \to
\Real$. On the other hand, for a function of type $\Baire \to \Nat$, the existence of its continuous modulus is equivalent to
the existence of its neighbourhood function (see
Beeson~\cite[Chapter~VI, Section~8,
Exercise~8]{BeesonFoundationConstMath},
Kohlenbach~\cite[Proposition~4.4]{KohlenbachContMod}). Thus, it is natural
to expect that her notion of continuous function gives rise to a
continuous function from $\UInt$ to $\Real$ having a continuous modulus in
our sense. Indeed, we show the continuous real-valued functions
on the unit interval having continuous moduli are exactly those
functions induced by continuous functions described in \cite{Loeb05} (see Section~\ref{sec:Code}).

It should be noted that the equivalence of $\UCTc$ and
$\DFAN$ is immediate from \cite{Loeb05} once the
above mentioned equivalence between having continuous moduli and being
induced by Loeb's ``continuous functions'' is established.
Nevertheless, the latter equivalence is not so straightforward as it
requires us to use the quotient property of the intuitionistic
representation of the unit interval by the ternary spread. 
Moreover, our detailed proof of the equivalence of $\UCTc$ and $\DFAN$
in Section~\ref{sec:UCTc} without using some coding of continuous functions
would be more accessible to those who work in Bishop-style
constructive mathematics.

Throughout this paper, we work informally in Bishop-style constructive mathematics
\cite{Bishop-67}. However, one should be able to formalise our work in
Heyting arithmetic in all finite types $\HAw$
\cite[1.6.15]{Troelstra1973} with the axiom scheme $\qfAC^{1,0}$ of
quantifier free choice from sequences to numbers.

\begin{notation}
  \label{not:general}
  The letters $s,t,u$ range over the elements of finite binary
  sequences $\BSeq$ or finite ternary sequences $\TSeq$;
  the letters $\alpha,\beta,\gamma$ range over 
  the the elements of infinite sequences $\Cantor$ or $\TTree$.
  The set of finite binary (or ternary) sequences 
  of length $n \in \Nat$ is denoted by $\Bin^{n}$ (or $\Ter^{n}$).
  We write
  $\seq{x_{0}, \dots, x_{n-1}}$ for an element of $\BSeq$ (or
  $\TSeq$) of length $n$. The constant finite sequence of length
  $n$ with the value $i$ at each index is denoted by $i^{n}$.
  The length of $s$ is denoted by $\lh{s}$.
  The concatenation of $s$ and
  $t$ will be denoted by $s*t$, and the concatenation
  of a finite sequence $s$ and an infinite sequence $\alpha$ is
  denoted by $s * \alpha$. 
  We write $\alpha_{n}$ and $s_{n}$ ($n < \lh{s}$) for the value of
  $n$-th term of $\alpha$ and $s$.
  We write $s \preccurlyeq t$ if 
  $s$ is an initial segment of $t$. 
  We write $\overline{\alpha}n$ for the initial segment of
  $\alpha$ of length $n$ 
  and 
  $\overline{s}n$
  for the initial segment of $s$ of length $n \leq \lh{s}$.
  We write $\alpha \in s$ if
  $\overline{\alpha}\lh{s} = s$. 
  We write $\widehat{s}$ for $s * 0^{\omega}$ and $\widebreve{s}$
  for $s * 1^{\omega}$, where 
  $0^{\omega}$ and $1^{\omega}$ are infinite sequences of $0$ and
  $1$ respectively. 
\end{notation}

\section{Real numbers}\label{sec:Real}
As our standard notion of real numbers, we adopt Cauchy sequences of
rational numbers with explicitly given moduli, which we recall in Section \ref{sec:FundmentalSequences}.
For the purpose of this paper, however, it is sometimes convenient to
work with other (but equivalent) representations of real numbers; see Section \ref{sec:RegularSequences} and 
Section~\ref{sec:LeobReal}.

\subsection{Fundamental sequences}\label{sec:FundmentalSequences}
Among
the several possible choices of definition, we adopt the following
from Troelstra and van Dalen \cite[Chapter 5, Section 2]{ConstMathI}.
\begin{definition}
  A \emph{fundamental sequence with a modulus} is a sequence
  $\seq{r_{n}}_{n \in
  \Nat}$ of rational numbers together with a function $\delta \colon \Nat \to \Nat$,
  called a \emph{modulus} of $\seq{r_{n}}_{n \in \Nat}$, such that
  \[
    \forall k,n,m \in \Nat
    \left( |r_{\delta(k) + n} - r_{\delta(k) + m}| \leq 2^{-k} \right).
  \]
  Two fundamental sequences $\seq{r_{n}}_{n \in \Nat}$ and
  $\seq{q_{n}}_{n \in \Nat}$ with moduli $\delta$ and $\xi$
  respectively are \emph{equal}, written $\seq{r_{n}}_{n \in \Nat}
  \simeq \seq{q_{n}}_{n \in \Nat}$, if
  \begin{equation}
    \label{def:FundamentalEqual}
    \forall k \exists n \forall m \left( |r_{n + m} - q_{n + m}|
    \leq
    2^{-k}\right).
  \end{equation}
  By a \emph{real number}, we mean a fundamental sequence 
  with a modulus. 
\end{definition}

  The orders on real numbers are defined by
  \begin{align*}
    \seq{r_{n}}_{n \in \Nat} < \seq{q_{n}}_{n \in \Nat}
    &\defeqiv \exists k, n \in \Nat \forall m \in \Nat \left( 
    q_{n+m} - r_{n+m} > 2^{-k} \right), \\
    \seq{r_{n}}_{n \in \Nat} \leq \seq{q_{n}}_{n \in \Nat}
    &\defeqiv 
    \neg \left( \seq{q_{n}}_{n \in \Nat} < \seq{r_{n}}_{n \in \Nat}
    \right).
  \end{align*}
  One can show that these orders are well-defined with respect to
  $\simeq$ and that 
  \[
    \seq{r_{n}}_{n \in \Nat} \simeq \seq{q_{n}}_{n \in \Nat}
    \leftrightarrow \seq{r_{n}}_{n \in \Nat} \leq \seq{q_{n}}_{n \in \Nat}
    \wedge \seq{q_{n}}_{n \in \Nat} \leq \seq{r_{n}}_{n \in \Nat}.
  \]
  Rational numbers are embedded into fundamental sequences by
  $r \mapsto \seq{r}_{n \in \Nat}$,
  which is an order embedding.

  The arithmetical operations on fundamental sequences are 
  defined term-wise: if $\seq{r_{n}}_{n \in \Nat}$ 
  and $\seq{q_{n}}_{n \in \Nat}$ are fundamental sequences
  with moduli $\delta$ and $\xi$ respectively, then
\begin{align*}
    \seq{r_{n}}_{n \in \Nat} + \seq{q_{n}}_{n \in \Nat}
    &\defeql
    \seq{r_{n} + q_{n}}_{n \in \Nat} &&\text{with modulus $\zeta(k) =
    \max\left\{\delta(k+1),\xi(k+1)  \right\}$},\\
    -\seq{r_{n}}_{n \in \Nat} 
    &\defeql
    \seq{-r_{n}}_{n \in \Nat} &&\text{with modulus $\delta$},\\
    |\seq{r_{n}}_{n \in \Nat}|
    &\defeql
    \seq{|r_{n}|}_{n \in \Nat} &&\text{with modulus $\delta$}.
\end{align*}

\subsection{Regular sequences}\label{sec:RegularSequences}
For the spread representation of the unit interval in Section
\ref{sec:SpreadRepresentation}, it is convenient to work with regular
sequences.\footnote{Our terminology differs from
  Bishop~\cite[Chapter 2]{Bishop-67} in which a regular sequence is
defined by the property 
$
  \forall n,m \in \Nat
  \left( |r_{n} - r_{m}| \leq 1/n + 1/m  \right).
$}
\begin{definition}\label{def:RegularSequences}
A sequence $\seq{r_{n}}_{n \in \Nat}$ of rational numbers is 
\emph{regular} if 
\[
  \forall n \in \Nat
  \left( |r_{n} - r_{n+1}| \leq 2^{-(n+1)}\right).
\]
Two regular sequences $\seq{r_{n}}_{n \in \Nat}$ and  $\seq{q_{n}}_{n \in \Nat}$
are defined to be \emph{equal}, written  
$\seq{r_{n}}_{n \in \Nat} \simeq \seq{q_{n}}_{n \in \Nat}$, if
\begin{equation}
  \label{def:RegularEqual}
  \forall n \in \Nat \left( |r_{n+1} - q_{n+1}| \leq 2^{-n} \right).
\end{equation}
\end{definition}
The orders on regular sequences are defined by
\begin{align*}
  \seq{r_{n}}_{n \in \Nat} < \seq{q_{n}}_{n \in \Nat}
  &\defeqiv \exists  n \in \Nat \left( q_{n+1} - r_{n+1} > 2^{-n}\right), \\
  \seq{r_{n}}_{n \in \Nat} \leq \seq{q_{n}}_{n \in \Nat}
  &\defeqiv 
  \neg \left(\seq{q_{n}}_{n \in \Nat} < \seq{r_{n}}_{n \in
  \Nat}  \right).
\end{align*}
Note that
$\seq{r_{n}}_{n \in \Nat} \simeq \seq{q_{n}}_{n \in \Nat}
\leftrightarrow \seq{r_{n}}_{n \in \Nat} \leq \seq{q_{n}}_{n \in \Nat}
\wedge \seq{q_{n}}_{n \in \Nat} \leq \seq{r_{n}}_{n \in \Nat}$.
Also, it is straightforward to show that
\[
  \seq{r_{n}}_{n \in \Nat} \leq \seq{q_{n}}_{n \in \Nat}
  \leftrightarrow \forall k \in \Nat \exists n \in \Nat \forall m \in \Nat
  \left( r_{n+m} - q_{n+m} \leq 2^{-k} \right),
\]
and so
\begin{equation}
  \label{eq:RegEquality}
  \seq{r_{n}}_{n \in \Nat} \simeq \seq{q_{n}}_{n \in \Nat}
  \leftrightarrow
  \forall k \in \Nat \exists n \in \Nat \forall m \in \Nat
  |r_{n+m} - q_{n+m}| \leq 2^{-k}.
\end{equation}
  The arithmetical operations on regular sequences are 
  defined by
\begin{align*}
    \seq{r_{n}}_{n \in \Nat} + \seq{q_{n}}_{n \in \Nat}
    &\defeql
    \seq{r_{n+1} + q_{n+1}}_{n \in \Nat}, \\
    -\seq{r_{n}}_{n \in \Nat} 
    &\defeql
    \seq{-r_{n}}_{n \in \Nat}, \\
    |\seq{r_{n}}_{n \in \Nat}|
    &\defeql
    \seq{|r_{n}|}_{n \in \Nat}. 
\end{align*}
\begin{lemma}
  \label{lem:RegularOrder}
  For any regular sequences $\seq{r_{n}}_{n \in \Nat}$ and 
  $\seq{q_{n}}_{n \in \Nat}$, we have
  \[
  \seq{r_{n}}_{n \in \Nat} < \seq{q_{n}}_{n \in \Nat}
  \leftrightarrow
  \exists k,n \in \Nat \forall m \in \Nat \left( q_{n+m} -
  r_{n+m} > 2^{-k}\right).
  \]
\end{lemma}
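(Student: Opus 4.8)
The plan is to prove the two directions of the biconditional separately, with essentially all the work concentrated in the left-to-right direction.

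First I would handle the implication from right to left, which is pure bookkeeping. Suppose $k,n \in \Nat$ satisfy $\forall m \in \Nat\,(q_{n+m} - r_{n+m} > 2^{-k})$. I would put $N = \max\{k,n\}$. Then $N \geq k$ gives $2^{-N} \leq 2^{-k}$, while $N + 1 \geq n + 1$ lets us write $N+1 = n + (N+1-n)$ with $N + 1 - n \geq 1 \geq 0$, so instantiating the hypothesis at $m = N+1-n$ yields $q_{N+1} - r_{N+1} > 2^{-k} \geq 2^{-N}$, which is exactly the defining condition for $\seq{r_n}_{n \in \Nat} < \seq{q_n}_{n \in \Nat}$.

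For the implication from left to right, I would fix $n$ with $q_{n+1} - r_{n+1} > 2^{-n}$ and set $d = q_{n+1} - r_{n+1}$, a rational number. Since $d - 2^{-n}$ is a positive rational, there is $k \in \Nat$ with $2^{-k} < d - 2^{-n}$. The key point is that regularity (Definition~\ref{def:RegularSequences}) bounds the drift of a regular sequence from any fixed index onwards: for every $j \geq n+1$,
\[
  |r_{j} - r_{n+1}| \leq \sum_{i=n+1}^{j-1} |r_{i+1} - r_{i}| \leq \sum_{i=n+1}^{\infty} 2^{-(i+1)} = 2^{-(n+1)},
\]
and likewise $|q_{j} - q_{n+1}| \leq 2^{-(n+1)}$ (with the convention that the empty sum, occurring when $j = n+1$, is $0$). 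Hence, taking $j = (n+1) + m$ for an arbitrary $m \in \Nat$, the triangle inequality gives
\[
  q_{j} - r_{j} \geq d - |q_{j} - q_{n+1}| - |r_{j} - r_{n+1}| \geq d - 2 \cdot 2^{-(n+1)} = d - 2^{-n} > 2^{-k},
\]
so $n+1$ and $k$ witness the right-hand side.

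The only genuine (and still fairly mild) obstacle I anticipate is in this forward direction: the naive attempt to retain the original witness $2^{-n}$ fails, because as $m$ grows the accumulated drift of $\seq{r_n}_{n\in\Nat}$ and $\seq{q_n}_{n\in\Nat}$ away from index $n+1$ can consume almost the entire gap $2^{-n}$. The remedy is precisely to exploit that $q_{n+1} - r_{n+1}$ strictly exceeds $2^{-n}$ by a positive rational surplus, which can be under-approximated by some $2^{-k}$; this surplus is exactly what survives after subtracting the maximal possible drift. Everything else reduces to the geometric-series estimate built into the notion of regularity together with the monotonicity of $n \mapsto 2^{-n}$.
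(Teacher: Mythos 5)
Your proof is correct and follows essentially the same route as the paper's: the right-to-left direction via $M=\max\{k,n\}$ is identical, and in the forward direction the paper likewise extracts a rational surplus $2^{-l}$ with $q_{n+1}-r_{n+1}>2^{-n}+2^{-l}$ and uses the telescoping/geometric bound $|r_{n+1+m}-r_{n+1}|\leq 2^{-(n+1)}$ coming from regularity. The only differences are notational (your $k$ is the paper's $l$) and that the paper writes the drift estimate as a single chained inequality rather than an explicit series.
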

\begin{proof}
  ($\Rightarrow$) Suppose that 
  $\seq{r_{n}}_{n \in \Nat} < \seq{q_{n}}_{n \in \Nat}$. 
  Then, there exists $k \in \Nat$ such that $q_{k+1} - r_{k+1} >
  2^{-k}$. Choose $l \in \Nat$ such that 
  $q_{k+1} - r_{k+1} > 2^{-k} + 2^{-l}$. For any $m \in \Nat$,
  \[
    \begin{aligned}
      q_{k+1 + m } - r_{k+1 + m}
      &= 
      q_{k+1 + m } - q_{k+1} + q_{k+1} - r_{k+1} + r_{k+1}- r_{k+1 + m} \\
      &> -2^{-(k+1)} + (2^{-k} + 2^{-l}) - 2^{-(k+1)}  \\
      &= 2^{-l}.
    \end{aligned}
  \]
   \noindent($\Leftarrow$) Suppose there are $k,n \in \Nat$ such that $\forall m
  \in \Nat \left(q_{n+m} - r_{n+m} > 2^{-k}\right)$. Put $M = \max
  \left\{ k, n \right\}$. Then
    $
    q_{M+1} - r_{M+1} > 2^{-k} \geq 2^{-M}.
    $
\end{proof}

\begin{proposition}
  \label{prop:OrderBiject}
  There exists an order preserving bijection between the set of fundamental sequences
  with moduli equipped with the equality \eqref{def:FundamentalEqual}
  and the set of regular sequences equipped with the equality
  \eqref{def:RegularEqual}: 
  \begin{enumerate}
    \item If $\seq{r_{n}}_{n \in \Nat}$ is a fundamental sequence with
      modulus $\delta$, then $\seq{r_{\delta(n+1)}}_{n \in \Nat}$ is
      a regular sequence.
    \item If $\seq{r_{n}}_{n \in \Nat}$ is a regular sequence, then it
      is a fundamental sequence with modulus $k \mapsto k$.
  \end{enumerate}
  Moreover, the bijection preserves arithmetical operations up to
  equalities on fundamental sequences and regular sequences.
\end{proposition}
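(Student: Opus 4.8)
The plan is to verify the following, in order: (a)~the assignment in item~(2) sends a regular sequence to a fundamental sequence with modulus $k \mapsto k$, and on regular sequences the relation \eqref{def:RegularEqual} coincides with \eqref{def:FundamentalEqual} while the strict order coincides with the one on fundamental sequences; (b)~the assignment in item~(1) sends a fundamental sequence with a modulus to a regular sequence; (c)~the two assignments are mutually inverse up to the respective equalities; (d)~the induced maps on $\simeq$-classes are therefore mutually inverse order-preserving bijections, where well-definedness and order-preservation of the map induced by~(1) come for free from (a), (c), and the already-noted fact that $<$ on fundamental sequences is well-defined with respect to $\simeq$; (e)~both assignments commute with $+$, $-$ and $|\cdot|$ up to equality, by a term-wise computation.

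For (a): if $\langle r_n\rangle_{n\in\Nat}$ is regular then telescoping gives $|r_i - r_j| \le \sum_{l=i}^{j-1} 2^{-(l+1)} < 2^{-i}$ for $i \le j$, hence $|r_{k+n} - r_{k+m}| \le 2^{-k}$ for all $k,n,m$, so it is fundamental with modulus $k\mapsto k$; the coincidence of the two equalities on regular sequences is immediate from \eqref{eq:RegEquality} and \eqref{def:FundamentalEqual}, and that of the two strict orders from Lemma~\ref{lem:RegularOrder} and the definition of $<$ on fundamental sequences. For (b): given $\langle r_n\rangle_{n\in\Nat}$ with modulus $\delta$, the inequality $|r_{\delta(n+1)} - r_{\delta(n+2)}| \le 2^{-(n+1)}$ follows by applying the modulus property at level $n+1$ when $\delta(n+1) \le \delta(n+2)$, and at level $n+2$ otherwise, using $2^{-(n+2)} \le 2^{-(n+1)}$; so $\langle r_{\delta(n+1)}\rangle_{n\in\Nat}$ is regular.

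For (c): applying~(2) then~(1) to a regular $\langle r_n\rangle_{n\in\Nat}$ produces $\langle r_{n+1}\rangle_{n\in\Nat}$, which is $\simeq$-equal to $\langle r_n\rangle_{n\in\Nat}$ by \eqref{def:RegularEqual} since $|r_{n+2} - r_{n+1}| \le 2^{-(n+2)} \le 2^{-n}$; applying~(1) then~(2) to $\langle r_n\rangle_{n\in\Nat}$ with modulus $\delta$ returns the rational sequence $\langle r_{\delta(n+1)}\rangle_{n\in\Nat}$, and $\langle r_n\rangle_{n\in\Nat} \simeq \langle r_{\delta(n+1)}\rangle_{n\in\Nat}$ in the sense of \eqref{def:FundamentalEqual}: given $k$, put $N = \max\{\delta(k),k\}$ and, for each $m$, set $j = N+m+1$; then either $\delta(j) \ge \delta(k)$, in which case the modulus property at level $k$ (both indices $N+m$ and $\delta(j)$ being $\ge \delta(k)$) bounds $|r_{N+m} - r_{\delta(j)}|$ by $2^{-k}$, or $\delta(j) < \delta(k) \le N \le N+m$, in which case the modulus property at level $j$ bounds it by $2^{-j} \le 2^{-k}$. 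This yields (d) — for instance, map~(1) descends to $\simeq$-classes because the round trip identifies $\langle r_{\delta(n+1)}\rangle_{n\in\Nat}$ with $\langle r_n\rangle_{n\in\Nat}$ up to $\simeq$ on fundamental sequences, which on regular sequences is $\simeq$ on regular sequences by (a) — and (e) is a routine check from the definitions of the operations together with \eqref{eq:RegEquality}. The only mildly delicate points, and hence the ``hard part'', are the two case distinctions on the relative sizes of the modulus values in (b) and (c), which are forced by allowing non-monotone moduli; everything else is bookkeeping, and in particular well-definedness and order-preservation of the map induced by~(1) need not be argued directly.
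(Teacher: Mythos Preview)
Your proposal is correct and follows essentially the same route as the paper: both arguments hinge on the identity $\langle r_n\rangle_{n\in\Nat} \simeq \langle r_{\delta(n+1)}\rangle_{n\in\Nat}$ as fundamental sequences, and then derive well-definedness and order-preservation of the map in item~(1) from it via \eqref{eq:RegEquality} and Lemma~\ref{lem:RegularOrder}. Your explicit case distinctions in (b) and (c), forced by allowing non-monotone moduli, are more carefully spelled out than in the paper, but the overall strategy is the same.
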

\begin{proof}
  If $\seq{r_{n}}_{n \in \Nat}$ is a fundamental sequence with modulus
  $\delta$, then 
  \[
    |r_{\delta(n+1)} - r_{\delta(n+2)}| \leq 2^{-(n+1)}
  \]
  for all $n \in \Nat$, so $\seq{r_{\delta(n+1)}}_{n \in \Nat}$ is a
  regular sequence.
  Conversely, if $\seq{r_{n}}_{n \in \Nat}$ is a regular sequence, then 
  \[
    |r_{k+n} - r_{k+n+m}| \leq 2^{-(k+n)}
  \]
  for all $k, n,m \in \Nat$. Thus, $\seq{r_{n}}_{n \in \Nat}$ 
  is a fundamental sequence with modulus $k \mapsto k$

  Let $\seq{r_{n}}_{n \in \Nat}$ and 
  $\seq{q_{n}}_{n \in \Nat}$ be fundamental sequences with
  moduli $\delta$ and $\xi$ respectively. We show that 
  \begin{equation}
  \label{prop:OrderBiject1}
    \seq{r_{n}}_{n \in \Nat} \simeq \seq{q_{n}}_{n \in \Nat} 
    \leftrightarrow
    \seq{r_{\delta(n+1)}}_{n \in \Nat} \simeq \seq{q_{\xi(n+1)}}_{n
    \in \Nat},
  \end{equation}
  where the left hand side is the equality of fundamental sequences
  and the right hand side is that of regular sequences.
  By \eqref{eq:RegEquality}, it suffice to show that
  \begin{equation}
  \label{prop:OrderBiject2}
    \seq{r_{n}}_{n \in \Nat} \simeq \seq{r_{\delta(n+1)}}_{n \in \Nat}
  \end{equation}
  as fundamental sequences, i.e.,
    $
    \forall k \in \Nat \exists n \in \Nat \forall m \in \Nat 
    |r_{n+m} - r_{\delta(n+1)+m} | \leq 2^{-k}.
    $
  Let $k \in \Nat$, and put $n = \max\left\{\delta(k), k \right\}$.
  Fix $m \in \Nat$.  If $n \geq \delta(n+1)$, then
  \[
    |r_{n+m}  - r_{\delta(n+1) + m} |
    \leq 2^{-(n+1)}
    \leq 2^{-(k+1)} < 2^{-k}.
  \]
  If $n \leq \delta(n+1)$, then $\delta(k) \leq \delta(n+1)$ and
  $\delta(k) \leq n$, so $|r_{n+m} - r_{\delta(n+1) + m}| \leq
  2^{-k}$.
  
  Let $\Real$ and $\Real'$ be the sets of fundamental sequences 
  and regular sequences, respectively, with respective equalities.
  Write $F \colon \Real \to \Real'$ and $G \colon \Real' \to \Real$
  for the mappings
  $\seq{r_{n}}_{n \in \Nat} \mapsto \seq{r_{\delta(n+1)}}_{n \in
  \Nat}$ and $\seq{r_{n}}_{n \in \Nat} \mapsto \seq{r_{n}}_{n \in
  \Nat}$, respectively.
  By \eqref{prop:OrderBiject1} and \eqref{eq:RegEquality}, these
  mappings are well-defined. 
  We have $F \circ G = \id_{\Real'}$, and we also have $G \circ F = \id_{\Real}$
  by \eqref{prop:OrderBiject2}. By Lemma \ref{lem:RegularOrder}, we
  have
  \[
    \seq{r_{n}}_{n \in \Nat} < \seq{q_{n}}_{n \in \Nat} 
    \leftrightarrow G( \seq{r_{n}}_{n \in \Nat} )
    < G( \seq{q_{n}}_{n \in \Nat} )
  \]
  for regular sequences $\seq{r_{n}}_{n \in \Nat}$ and $\seq{q_{n}}_{n
  \in \Nat}$. Thus
  \begin{align*}
    \seq{r_{n}}_{n \in \Nat} < \seq{q_{n}}_{n \in \Nat} 
    &\leftrightarrow G \circ F( \seq{r_{n}}_{n \in \Nat} )
    < G \circ F( \seq{q_{n}}_{n \in \Nat} )\\
    &\leftrightarrow F( \seq{r_{n}}_{n \in \Nat} )
    < F( \seq{q_{n}}_{n \in \Nat} )
  \end{align*}
  for fundamental sequences $\seq{r_{n}}_{n \in \Nat}$ and
  $\seq{q_{n}}_{n \in \Nat}$ (with some moduli).
  Thus $F$ and $G$ are order bijections. It is then straightforward to
  show that $F$ and $G$ preserve arithmetical operations.
\end{proof}
Proposition \ref{prop:OrderBiject} allows us
to freely use fundamental sequences with moduli or
regular sequences as the notion of real numbers whichever is
convenient, and we will do so without explicit reference to 
the proposition.
The set of real numbers is denoted by $\Real$ and 
the equality on $\Real$ will be written $\simeq$.
The unit interval $\UInt$ is defined as usual: $\UInt = \left\{ x \in
  \Real \mid 0 \leq x \leq 1 \right\}$.

\subsection{Shrinking sequences of rational intervals}
\label{sec:LeobReal}
Loeb~\cite{Loeb05} uses a representation of real numbers by shrinking
sequences of rational intervals. 
To relate our work with \cite{Loeb05} in Section \ref{sec:Code}, we
briefly recall some basic definitions about this representation.

\begin{notation}
  \label{not:Interval}
Let
\begin{equation}
  \label{def:RatInt}
  \RatInt \defeql \left\{(p,q) \in \Rat \times \Rat \mid p \leq q \right\}, 
\end{equation}
which can be thought of as the set of closed intervals $[p,q]$ with rational
endpoints $p$ and $q$.
For $(p,q) \in \RatInt$, let $\lh{(p,q)} = q - p$, the \emph{length}
of the interval $[p,q]$. For  $\mathbb{I}, \mathbb{J}\in \RatInt$,
define
  \begin{align*}
    \fst{\mathbb{I}} &\defeql \text{the first projection of
      $\mathbb{I}$},\\
    \snd{\mathbb{I}} &\defeql \text{the second projection of
      $\mathbb{I}$},\\
    \mathbb{I} \sqsubseteq \mathbb{J}
    &\defeqiv \fst{\mathbb{J}} \leq \fst{\mathbb{I}} \wedge
    \snd{\mathbb{I}} \leq \snd{\mathbb{J}},  \\
    \mathbb{I} \approx \mathbb{J} 
    &\defeqiv
    \fst{\mathbb{J}} \leq \snd{\mathbb{I}} \wedge
    \fst{\mathbb{I}} \leq \snd{\mathbb{J}}. 
  \end{align*}
\end{notation}
The following notion is called a \emph{real number} in Loeb
\cite[Section 3]{Loeb05}.
\begin{definition}
  \label{def:LoebReal}
  A sequence $\seq{\RInt_n}_{n \in \Nat} \colon \Nat \to \RatInt$ is
  a \emph{shrinking sequence} if 
  \begin{enumerate}[({S}1)]
    \item\label{def:LoebRea1} $\forall n \in \Nat \left( \RInt_{n+1} \sqsubseteq \RInt_{n}
      \right)$,
    \item\label{def:LoebRea2} $\forall k \in \Nat \exists n \in \Nat \left( \lh{\RInt_n}
      \leq 2^{-k} \right)$.
  \end{enumerate}
  Two shrinking sequences
  $\seq{\RInt_n}_{n \in \Nat}$
  and $\seq{\mathbb{J}_n}_{n \in \Nat}$
  are \emph{equal} if
  \begin{equation}
    \label{def:LoebRealEqual}
    \forall n \in \Nat \left( \RInt_n \approx \mathbb{J}_n \right).
  \end{equation}
  The set of shrinking sequences with the equality \eqref{def:LoebRealEqual} is denoted by
  $\LoebReal$.

  The orders on $\LoebReal$  are defined by
  \begin{align*}
    \seq{\RInt_n}_{n \in \Nat} < \seq{\mathbb{J}_n}_{n \in \Nat}
    &\defeqiv 
    \exists n \in \Nat
    \left( \snd{\RInt_n} < \fst{\mathbb{J}_n}\right),\\
    \seq{\RInt_n}_{n \in \Nat} \leq \seq{\mathbb{J}_n}_{n \in \Nat}
    &\defeqiv 
    \neg \left( \seq{\mathbb{J}_n}_{n \in \Nat} < \seq{\RInt_n}_{n \in \Nat}
    \right).
  \end{align*}
  These orders are well-defined with respect to the equality on $\LoebReal$.
  The arithmetical operations on $\LoebReal$ are 
  defined by 
\begin{align*}
  \seq{\mathbb{I}_{n}}_{n \in \Nat} + \seq{\mathbb{J}_{n}}_{n \in \Nat}
    &\defeql
    \seq{(\fst{\mathbb{I}_{n}} + \fst{\mathbb{J}_{n}},
    \snd{\mathbb{I}_{n}} + \snd{\mathbb{J}_{n}})}_{n \in \Nat},\\
    -\seq{\mathbb{I}_{n}}_{n \in \Nat} 
    &\defeql
    \seq{\left(-\snd{\mathbb{I}_{n}},-\fst{\mathbb{I}_{n}}\right)}_{n
    \in \Nat},\\
    |\seq{\mathbb{I}_{n}}_{n \in \Nat}|
    &\defeql
    \seq{\left(\max \left\{ -\snd{\mathbb{I}_{n}},
    \fst{\mathbb{I}_{n}} \right\},
    \max \left\{  -\fst{\mathbb{I}_{n}},
    \snd{\mathbb{I}_{n}} \right\}\right)}_{n \in \Nat}.
\end{align*}
\end{definition}
\begin{proposition}
  \label{prop:OrderBijectLoebReal}
  There exists an order preserving bijection between the set of shrinking
  sequences equipped with the equality \eqref{def:LoebRealEqual} and the set of
  regular sequences equipped with the equality \eqref{def:RegularEqual}: 
  \begin{enumerate}
    \item If $\seq{\RInt_n}_{n \in \Nat}$ is a shrinking sequence, then
      $\seq{\fst{\RInt_{\delta(n)}}}_{n \in \Nat}$
      is a regular sequence, where
      \[
        \delta(k) \defeql \text{the least $n \in \Nat$ such that
          $\lh{\RInt_{n}} \leq 2^{-(k+1)}$}.
      \]
    \item If $\seq{r_{n}}_{n \in \Nat}$ is a regular sequence, then
      $\seq{(r_{n+1} - 2^{-(n+1)}, r_{n+1} + 2^{-(n+1)})}_{n \in \Nat}$
      is a shrinking sequence.
  \end{enumerate}
  Moreover, the bijection preserves arithmetical operations up to
  equalities on shrinking sequences and regular sequences.
\end{proposition}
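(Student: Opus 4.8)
The plan is to follow the pattern of the proof of Proposition~\ref{prop:OrderBiject}. Write $F$ for the map in (1), sending a shrinking sequence $\seq{\RInt_n}_{n \in \Nat}$ to the regular sequence $\seq{\fst{\RInt_{\delta(n)}}}_{n \in \Nat}$, and $G$ for the map in (2), sending a regular sequence $\seq{r_n}_{n \in \Nat}$ to $\seq{(r_{n+1} - 2^{-(n+1)}, r_{n+1} + 2^{-(n+1)})}_{n \in \Nat}$. First I would check that the two maps land in the advertised sets. For $F$: condition (S1) makes the lengths $\lh{\RInt_n}$ non-increasing in $n$, so $\delta(k)$ exists by bounded search on the decidable rational predicate ``$\lh{\RInt_n} \leq 2^{-(k+1)}$'' (using (S2)) and is non-decreasing in $k$; since $\delta(n) \leq \delta(n+1)$ gives $\RInt_{\delta(n+1)} \sqsubseteq \RInt_{\delta(n)}$ and $\lh{\RInt_{\delta(n)}} \leq 2^{-(n+1)}$, both $\fst{\RInt_{\delta(n)}}$ and $\fst{\RInt_{\delta(n+1)}}$ lie in the interval $\RInt_{\delta(n)}$, whose length is at most $2^{-(n+1)}$, giving regularity. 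For $G$: regularity of $\seq{r_n}$ yields $r_{n+1} - r_{n+2} \leq 2^{-(n+2)}$ and $r_{n+2} - r_{n+1} \leq 2^{-(n+2)}$, which is exactly what (S1) for $G(\seq{r_n})$ requires, while $\lh{G(\seq{r_n})_n} = 2^{-n}$ gives (S2).

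Next I would show $F$ and $G$ are mutually inverse up to equality on representatives. For $F \circ G$, the modulus attached to $G(\seq{r_n})$ is $k \mapsto k+1$, so $F(G(\seq{r_n}))_n = r_{n+2} - 2^{-(n+2)}$, and $\seq{r_{n+2} - 2^{-(n+2)}}_{n \in \Nat} \simeq \seq{r_n}_{n \in \Nat}$ follows from \eqref{def:RegularEqual} and regularity in one line. For $G \circ F$, one has $G(F(\seq{\RInt_n}))_n = (\fst{\RInt_{\delta(n+1)}} - 2^{-(n+1)}, \fst{\RInt_{\delta(n+1)}} + 2^{-(n+1)})$, and one must verify $\RInt_n \approx G(F(\seq{\RInt_n}))_n$ for every $n$. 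I would decide whether $\delta(n+1) \geq n$: if so, $\RInt_{\delta(n+1)} \sqsubseteq \RInt_n$ and the overlap is immediate; if not, $\RInt_n \sqsubseteq \RInt_{\delta(n+1)}$ and $\lh{\RInt_{\delta(n+1)}} \leq 2^{-(n+2)}$, so $|\fst{\RInt_{\delta(n+1)}} - \fst{\RInt_n}| < 2^{-(n+1)}$ and the overlap again follows. This bookkeeping around the modulus $\delta$ is the fiddliest step, although each case is elementary.

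For the orders, let $\delta$ and $\delta'$ be the moduli of $\seq{\RInt_n}$ and $\seq{\mathbb{J}_n}$. I would show directly that $F$ preserves $<$: given $\snd{\RInt_N} < \fst{\mathbb{J}_N}$ with rational gap $\varepsilon > 0$, one has $\fst{\RInt_{\delta(j)}} \leq \snd{\RInt_N}$ for every $j$ (whether $\delta(j) \geq N$, via $\RInt_{\delta(j)} \sqsubseteq \RInt_N$, or $\delta(j) < N$, via $\RInt_N \sqsubseteq \RInt_{\delta(j)}$), and $\fst{\mathbb{J}_{\delta'(j)}} \geq \fst{\mathbb{J}_N} - 2^{-(j+1)}$ for every $j$ (in the case $\delta'(j) < N$ using $\lh{\mathbb{J}_{\delta'(j)}} \leq 2^{-(j+1)}$), hence $\fst{\mathbb{J}_{\delta'(j)}} - \fst{\RInt_{\delta(j)}} \geq \varepsilon - 2^{-(j+1)}$, which exceeds $2^{-k}$ for a suitable $k$ and all large $j$; Lemma~\ref{lem:RegularOrder} then gives $F(\seq{\RInt_n}) < F(\seq{\mathbb{J}_n})$. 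Symmetrically, $G$ preserves $<$: from a witness $\exists k, n\, \forall m\,(q_{n+m} - r_{n+m} > 2^{-k})$ of $\seq{r_n} < \seq{q_n}$ (Lemma~\ref{lem:RegularOrder}), taking $N = \max\{n, k\}$ gives $q_{N+1} - r_{N+1} > 2^{-N}$, i.e.\ $\snd{G(\seq{r_n})_N} < \fst{G(\seq{q_n})_N}$. Combining these with $G \circ F \approx \id$, $F \circ G \simeq \id$, and the well-definedness of $<$ with respect to the equalities on both sides, one obtains $\seq{\RInt_n} < \seq{\mathbb{J}_n} \iff F(\seq{\RInt_n}) < F(\seq{\mathbb{J}_n})$; since $\leq$ is the negation of $>$ and $\approx$ (resp.\ $\simeq$) is the conjunction of two $\leq$'s, $F$ and $G$ respect and reflect $\leq$ and equality, so they are mutually inverse order bijections.

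Finally, for the arithmetical operations I would compute on representatives: on intervals of the form $(r - 2^{-(n+1)}, r + 2^{-(n+1)})$ the defining formulas for $-$ and $|\cdot|$ on $\LoebReal$ reduce exactly to $-r$ and $|r|$, so $G$ preserves these on the nose, while for $+$ one checks $G(\seq{r_n} + \seq{q_n})_n \approx (G(\seq{r_n}) + G(\seq{q_n}))_n$ at each $n$ using $|(r_{n+2} + q_{n+2}) - (r_{n+1} + q_{n+1})| \leq 2^{-(n+1)}$. The main obstacle throughout is not any isolated hard idea but the careful handling of the modulus $\delta$ in $F$, which couples the index shifts in $F$ and $G$ and forces the small case distinctions above.
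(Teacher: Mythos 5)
Your proof is correct and is precisely the routine verification the paper omits (its proof of this proposition is literally ``Routine''), carried out along the same lines as the paper's detailed proof of Proposition~\ref{prop:OrderBiject}: check both maps land in the right sets, show they are mutually inverse up to the respective equalities, transfer the order via Lemma~\ref{lem:RegularOrder} and well-definedness, and verify the operations on representatives. All the case distinctions on $\delta$ and the arithmetic estimates you give check out.
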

\begin{proof}
  Routine.
\end{proof}

\section{Real-valued functions on the Cantor space}
\label{sec:RealValedFunctionsOnCantor}
In this section, we extend the equivalence between the decidable fan
theorem ($\DFAN$) and the uniform continuity principle with continuous modulus
($\UCc$) due to Berger~\cite{BergerFANandUC} to real-valued functions
on the Cantor space $\Cantor$ (cf.\
Introduction~\ref{sec:Introduction}).

First, we recall the notion of continuity on the Cantor space (see Troelstra and van Dalen~\cite[Chapter 4,
1.5]{ConstMathI}).
\begin{definition}
  \label{def:ContonCantor}
  \leavevmode
  \begin{enumerate}
    \item 
  A function $f \colon \Cantor \to \Nat$ is \emph{pointwise continuous} if
  \[
    \forall \alpha \in \Cantor \exists n \in \Nat \forall \beta \in
    \Cantor 
    \left( 
    \overline{\alpha}n = \overline{\beta}n \imp f(\alpha) = f(\beta)
    \right),
  \]
  and \emph{uniformly continuous} if
  \[
    \exists n \in \Nat \forall \alpha, \beta \in \Cantor
    \left( 
    \overline{\alpha}n = \overline{\beta}n \imp f(\alpha) = f(\beta)
    \right).
  \]

  \item
  A function $f \colon \Cantor \to \Real$ is \emph{pointwise
  continuous} if
  \[
    \forall \alpha \in \Cantor\forall k \in \Nat \exists n \in \Nat
    \forall \beta \in \Cantor \left( \overline{\alpha}n =
    \overline{\beta}n \imp |f(\alpha) - f(\beta)| \leq 2^{-k} \right),
  \]
  and \emph{uniformly continuous} if there exists $\omega \colon \Nat
  \to \Nat$, called a \emph{modulus of uniform continuity}, such that
  \[
    \forall k \in \Nat
    \forall \alpha, \beta \in \Cantor \left(
    \overline{\alpha}\omega(k)  = \overline{\beta}\omega(k) \imp
    |f(\alpha) - f(\beta) | \leq 2^{-k}\right).
  \]
  \end{enumerate}
  Unless otherwise noted, \emph{continuous} means pointwise
  continuous in this paper.
\end{definition}
\begin{remark}
  \label{rem:UnifContOnCantor}
  If $f \colon \Cantor \to \Nat$ is uniformly continuous, then
  there is a \emph{least modulus of uniform continuity} of $f$,
  i.e., there is a least $N \in \Nat$ such that 
  \begin{equation}
    \label{eq:UnifCont}
    \forall \alpha, \beta \in \Cantor
    \left( 
    \overline{\alpha}N = \overline{\beta}N \imp f(\alpha) = f(\beta)
    \right).
  \end{equation}
  Specifically, if $f \colon \Cantor \to \Nat$ is uniform
  continuous, then there exists $N \in \Nat$ which satisfies
  \eqref{eq:UnifCont}. Then
  \begin{align*}
    L \defeql
    \text{
      the least $n \leq N$ such that
      $\forall a \in \Bin^{n} \forall b \in \Bin^{N-n} \left(
      f(\widehat{a}) = f(\widehat{a * b})
      \right)$
    }
  \end{align*}
  is the least modulus of uniform continuity of $f$.
\end{remark}

\begin{definition}
  A \emph{modulus} of $f \colon \Cantor
\to \Real$ is  a function $g \colon \Nat \to \Cantor \to \Nat$ such that 
\begin{equation}
  \label{eq:ModulusRealValued}
  \forall k \in \Nat \forall \alpha, \beta \in \Cantor
  \left( \overline{\alpha}g_{k}(\alpha) =  \overline{\beta}g_{k}(\alpha)
  \imp |f(\alpha) - f(\beta)| \leq 2^{-k} \right).
\end{equation}
A modulus $g$ of $f \colon \Cantor
\to \Real $ is \emph{continuous} if $g_{k} \colon \Cantor \to \Nat$
is pointwise continuous for each $k \in \Nat$.

The principle $\UCTcC$ is the following statement:%
\begin{description}
 \item[($\UCTcC$)] 
  Every continuous function $f \colon \Cantor \to \Real$ 
  with a continuous modulus is uniformly continuous. 
\end{description}
\end{definition}

\begin{lemma}
  \label{lem:ContModItself}
  If $f \colon \Cantor \to \Real$ has a continuous
  modulus, then $f$ has a continuous modulus $g \colon \Nat \to
  \Cantor \to \Nat$ such that $g_{k}$ is a modulus of itself for each
  $k \in \Nat$.
\end{lemma}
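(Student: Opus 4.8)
The plan is to construct the required $g=(g_{k})_{k\in\Nat}$ componentwise, directly from a given continuous modulus $h$ of $f$, by an unbounded search anchored at the padded initial segments. Concretely, for each $k\in\Nat$ I would set
\[
  g_{k}(\alpha)\defeql\text{the least }n\in\Nat\text{ with }h_{k+1}\bigl(\widehat{\overline{\alpha}n}\bigr)\leq n,
\]
where $\widehat{\overline{\alpha}n}=\overline{\alpha}n*0^{\omega}$. The first thing to verify is that this search terminates, so that $g_{k}$ is a total function (which automatically respects the equality on $\Cantor$, since it depends only on the initial segments of $\alpha$). By pointwise continuity of $h_{k+1}$ there is $N_{0}$ such that $h_{k+1}(\gamma)=h_{k+1}(\alpha)$ whenever $\overline{\gamma}N_{0}=\overline{\alpha}N_{0}$; as $\widehat{\overline{\alpha}n}$ agrees with $\alpha$ on the first $N_{0}$ coordinates once $n\geq N_{0}$, we obtain $h_{k+1}(\widehat{\overline{\alpha}n})=h_{k+1}(\alpha)$ for all such $n$, so any $n\geq\max\{N_{0},h_{k+1}(\alpha)\}$ witnesses the (decidable) searched condition.

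Next I would check the two defining properties of $g_{k}$. For self-modulation, put $n=g_{k}(\alpha)$ and suppose $\overline{\alpha}n=\overline{\beta}n$; then $\overline{\alpha}m=\overline{\beta}m$, hence $\widehat{\overline{\alpha}m}=\widehat{\overline{\beta}m}$, for every $m\leq n$, so the condition $h_{k+1}(\widehat{\overline{\beta}m})\leq m$ has the same truth value as $h_{k+1}(\widehat{\overline{\alpha}m})\leq m$ for all $m\leq n$. Since the latter fails for every $m<n$ and holds at $m=n$, the search for $g_{k}(\beta)$ returns $n$ as well; thus $g_{k}$ is a modulus of itself, and in particular pointwise continuous (take $n=g_{k}(\alpha)$ in the definition of pointwise continuity). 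For the modulus property of $f$, again put $n=g_{k}(\alpha)$, suppose $\overline{\alpha}n=\overline{\beta}n$, and set $p=h_{k+1}(\widehat{\overline{\alpha}n})\leq n$. The three sequences $\alpha$, $\beta$ and $\widehat{\overline{\alpha}n}$ all extend $\overline{\alpha}n$, hence pairwise agree on their first $p$ coordinates; applying \eqref{eq:ModulusRealValued} at level $k+1$ to the pairs $(\widehat{\overline{\alpha}n},\alpha)$ and $(\widehat{\overline{\alpha}n},\beta)$ and then the triangle inequality yields $|f(\alpha)-f(\beta)|\leq 2^{-(k+1)}+2^{-(k+1)}=2^{-k}$. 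Collecting these, $g$ is a continuous modulus of $f$ each of whose components is a modulus of itself.

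The hard part, such as it is, is getting the constants right: because the search only pins down the value of $f$ at the single auxiliary point $\widehat{\overline{\alpha}n}$, the estimate for $|f(\alpha)-f(\beta)|$ must pass through that point and so loses a factor of two, which is precisely why $g_{k}$ is defined using $h_{k+1}$ rather than $h_{k}$. Apart from this bookkeeping, the proof is elementary: it uses only the pointwise continuity of each $h_{k}$ (to make the search terminate) and the defining inequality \eqref{eq:ModulusRealValued} of a modulus, and in particular it needs neither a fan-theoretic principle, nor Bishop's lemma, nor any coding of continuous functions.
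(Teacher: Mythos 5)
Your construction is essentially identical to the paper's: the paper defines $G_{k}(\alpha)$ as the least $n$ with $g_{k+1}(\widehat{\overline{\alpha}n})<n$ and runs exactly the same termination, self-modulation, and triangle-inequality arguments through the auxiliary point $\widehat{\overline{\alpha}n}$. The only difference is your use of $\leq n$ in place of $<n$ in the search condition, which is immaterial, so the proof is correct and takes the same route.
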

\begin{proof}
  Let $g \colon \Nat \to \Cantor \to \Nat$ be a continuous 
  modulus of  $f \colon \Cantor \to
  \Real$.  For each $k \in \Nat$,
  define $G_{k} \colon \Cantor \to \Nat$ by
  \[
    G_{k}(\alpha) \defeql \text{the least $n$ such that
      $g_{k+1}(\widehat{\overline{\alpha}n}) < n$}.
  \]
  Note that $G_{k}$ is well-defined because $g_{k+1}$ is
  continuous.
  It is straightforward to show that 
  for each $k$, the function $G_{k}$ is a continuous modulus
  of itself (see \cite[Lemma 2.2]{FujiwaraKawaiBRCC}). 
  We show that  $G$ is a modulus of $f$. Let $k \in \Nat$ and $\alpha,
  \beta \in \Cantor$, and suppose that
  $\overline{\alpha}G_{k}(\alpha) = \overline{\beta}G_{k}(\alpha)$.
  Since $g_{k+1}(\widehat{\overline{\alpha}G_{k}(\alpha)}) < G_{k}(\alpha)$, we
  have
  \begin{align*}
    \overline{\alpha}g_{k+1}(\widehat{\overline{\alpha}G_{k}(\alpha)})
    &=
    \overline{\widehat{\left(\overline{\alpha}G_{k}(\alpha)
  \right)}}g_{k+1}(\widehat{\overline{\alpha}G_{k}(\alpha)}) \\
    &= 
    \overline{\widehat{\left(\overline{\beta}G_{k}(\alpha)  \right)}}g_{k+1}(\widehat{\overline{\alpha}G_{k}(\alpha)})
    =
    \overline{\beta}g_{k+1}(\widehat{\overline{\alpha}G_{k}(\alpha)}).
  \end{align*}
    Thus
    \[
      \begin{aligned}
        |f(\alpha) - f(\beta)| 
        &\leq |f(\alpha) - f(\widehat{\overline{\alpha}G_{k}(\alpha)}) | +
        | f(\widehat{\overline{\alpha}G_{k}(\alpha)}) - f(\beta)|\\
        &\leq 2^{-(k+1)} + 2^{-(k+1)} = 2^{-k}.
      \end{aligned}
    \]
  Hence, $G$ is a modulus of $f$.
\end{proof}
\begin{proposition}\label{prop:ContMod}
  The principle $\UCTcC$ is equivalent to $\UCc$.
\end{proposition}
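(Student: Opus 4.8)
The plan is to prove the two implications separately. The implication $\UCTcC \Rightarrow \UCc$ is routine, while $\UCc \Rightarrow \UCTcC$ is the substantive direction, and its crux is a small boundedness argument.

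\emph{From $\UCTcC$ to $\UCc$.} Given $f \colon \Cantor \to \Nat$ which is pointwise continuous and has a continuous modulus $g \colon \Cantor \to \Nat$ in the sense of \eqref{eq:Modulus}, I would compose $f$ with the canonical order embedding $\iota \colon \Nat \to \Real$ (sending $n$ to the constant regular sequence $\seq{n}_{m \in \Nat}$) to obtain $\iota \circ f \colon \Cantor \to \Real$. Setting $g_{k} := g$ for every $k \in \Nat$ gives a continuous modulus of $\iota \circ f$, since $\overline{\alpha}g(\alpha) = \overline{\beta}g(\alpha)$ forces $f(\alpha) = f(\beta)$ and hence $|\iota f(\alpha) - \iota f(\beta)| = 0 \leq 2^{-k}$; in particular $\iota \circ f$ is pointwise continuous. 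Then $\UCTcC$ provides a modulus of uniform continuity $\omega$ of $\iota \circ f$, and taking $k = 1$ yields $\overline{\alpha}\omega(1) = \overline{\beta}\omega(1) \imp |\iota f(\alpha) - \iota f(\beta)| \leq 2^{-1} < 1$. Since $\iota$ is an order embedding and $f(\alpha), f(\beta) \in \Nat$, this forces $f(\alpha) = f(\beta)$, so $\omega(1)$ witnesses uniform continuity of $f$.

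\emph{From $\UCc$ to $\UCTcC$.} Let $f \colon \Cantor \to \Real$ be pointwise continuous with a continuous modulus. By Lemma \ref{lem:ContModItself} I may assume the modulus $g \colon \Nat \to \Cantor \to \Nat$ has the property that each $g_{k}$ is a modulus of itself; in particular each $g_{k} \colon \Cantor \to \Nat$ is pointwise continuous and has the continuous modulus $g_{k}$, so $\UCc$ applies to $g_{k}$ and yields a modulus of uniform continuity $N_{k} \in \Nat$ of $g_{k}$ (the least one exists by Remark \ref{rem:UnifContOnCantor}). Now for every $\alpha \in \Cantor$ we have $\overline{\alpha}N_{k} = \overline{\widehat{\overline{\alpha}N_{k}}}N_{k}$, hence $g_{k}(\alpha) = g_{k}(\widehat{\overline{\alpha}N_{k}})$, and therefore $g_{k}(\alpha) \leq M_{k}$, where $M_{k} := \max\{ g_{k}(\widehat{s}) \mid s \in \Bin^{N_{k}} \}$ is a well-defined natural number, being a maximum over the finite set $\Bin^{N_{k}}$. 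Finally, if $\overline{\alpha}M_{k} = \overline{\beta}M_{k}$ then, since $g_{k}(\alpha) \leq M_{k}$, also $\overline{\alpha}g_{k}(\alpha) = \overline{\beta}g_{k}(\alpha)$, so $|f(\alpha) - f(\beta)| \leq 2^{-k}$ by \eqref{eq:ModulusRealValued}. Thus $k \mapsto M_{k}$ is a modulus of uniform continuity of $f$.

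The step I expect to require the most care, though it is not really an obstacle, is the passage ``$g_{k}$ uniformly continuous $\Rightarrow$ $g_{k}$ bounded'' in the last paragraph: this is a compactness-flavoured argument, but it is completely explicit once $N_{k}$ is available, because $g_{k}$ then factors through the finitely many binary strings of length $N_{k}$. The role of Lemma \ref{lem:ContModItself} is exactly to upgrade ``each $g_{k}$ is pointwise continuous'' to ``each $g_{k}$ has a continuous modulus'', which is what makes $\UCc$ applicable to $g_{k}$.
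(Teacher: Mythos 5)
Your proof is correct and follows essentially the same route as the paper: the forward direction via the embedding $\Nat \hookrightarrow \Real$, and the converse via Lemma \ref{lem:ContModItself}, applying $\UCc$ to each $g_{k}$ and bounding it by a maximum over $\Bin^{N_{k}}$. Your final step is in fact marginally cleaner than the paper's, which works with $g_{k+1}$ and a triangle inequality through $f(\widehat{\overline{\alpha}\omega(k)})$, whereas you apply the modulus property \eqref{eq:ModulusRealValued} directly to $\alpha,\beta$ once $g_{k}(\alpha)\leq M_{k}$ is established.
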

\begin{proof}
  $\UCTcC$ obviously implies $\UCc$.
  For the converse,
  let $g \colon \Nat \to \Cantor \to \Nat$ be a continuous modulus of a function $f \colon \Cantor \to
  \Real$. By Lemma \ref{lem:ContModItself}, we may assume that
  $g_{k}$ is a modulus of itself for each $k \in \Nat$,
  and hence $g_{k}$ is uniformly continuous for each $k \in \Nat$ by $\UCc$.

  For each $k \in \Nat$, let $N_{k}$ be the least modulus of uniform
  continuity of $g_{k+1}$ (see Remark \ref{rem:UnifContOnCantor}). Put
  \[
    \omega(k) \defeql \max \left\{ g_{k+1}(\widehat{s}) \mid  s \in
    \Bin^{N_{k}} \right\}.
  \]
  Then, for any $\alpha, \beta \in \Cantor$ such that
  $\overline{\alpha}\omega(k) = \overline{\beta}\omega(k)$, we have 
  $\overline{\widehat{\overline{\alpha}\omega(k)}}g_{k+1}(\alpha) =
  \overline{\alpha}g_{k+1}(\alpha)$ and
  $\overline{\widehat{\overline{\beta}\omega(k)}}g_{k+1}(\beta) =
  \overline{\beta}g_{k+1}(\beta)$. Thus
  \[
    \begin{aligned}
      |f(\alpha) - f(\beta)| 
      &\leq |f(\alpha) - f(\widehat{\overline{\alpha}\omega(k)})|
      + |f(\beta) - f(\widehat{\overline{\beta}\omega(k)})| \\
      &\leq 2^{-(k+1)} + 2^{-(k+1)} 
      = 2^{-k}.
    \end{aligned}
  \]
  Therefore $f$ is uniformly continuous with modulus $\omega$.
\end{proof}

\section{Spread representation of the unit interval}
\label{sec:SpreadRepresentation}
We review some basic properties of the spread representation of the
unit interval $\UInt$ (Troelstra and van Dalen~\cite[Chapter 6,
Section 3]{ConstMathI}). Specifically, we use the representation
described in Loeb~\cite{Loeb05}, which is slightly different from the
one described in \cite{ConstMathI}.  This representation is
essentially the signed-digit representation of the unit interval, where
each real number in the unit interval is represented by a path in the
ternary tree (see e.g., Lubarsky and
Richman~\cite{SignedBitLubarskyRichman}).  For the reader's
convenience, we describe this representation in some detail. 

\begin{figure}[tb]
  \centering
\begin{tikzpicture}[xscale=0.8,yscale=1.3]
  %
  %
  \draw (0,0) -- (-4,1);
  \draw (0,0) -- (0,1);
  \draw (0,0) -- (4,1);
  %
  %
  \draw (-4,1) -- (-6,2);
  \draw (-4,1) -- (-4,2);
  \draw (-4,1) -- (-2,2);
  \draw (0,1) -- (-2,2);
  \draw (0,1) -- (0,2);
  \draw (0,1) -- (2,2);
  \draw (4,1) -- (2,2);
  \draw (4,1) -- (4,2);
  \draw (4,1) -- (6,2);
  %
  %
  \draw (-6,2) -- (-7,3);
  \draw (-6,2) -- (-6,3);
  \draw (-6,2) -- (-5,3);
  \draw (-4,2) -- (-5,3);
  \draw (-4,2) -- (-4,3);
  \draw (-4,2) -- (-3,3);
  \draw (-2,2) -- (-3,3);
  \draw (-2,2) -- (-2,3);
  \draw (-2,2) -- (-1,3);
  \draw (0,2) -- (-1,3);
  \draw (0,2) -- (0,3);
  \draw (0,2) -- (1,3);
  \draw (2,2) -- (1,3);
  \draw (2,2) -- (2,3);
  \draw (2,2) -- (3,3);
  \draw (4,2) -- (3,3);
  \draw (4,2) -- (4,3);
  \draw (4,2) -- (5,3);
  \draw (6,2) -- (5,3);
  \draw (6,2) -- (6,3);
  \draw (6,2) -- (7,3);
  %
  %
  \draw (-7,3) -- (-7.5,4);
  \draw (-7,3) -- (-7,4);
  \draw (-7,3) -- (-6.5,4);
  \draw (-6,3) -- (-6.5,4);
  \draw (-6,3) -- (-6,4);
  \draw (-6,3) -- (-5.5,4);
  \draw (-5,3) -- (-5.5,4);
  \draw (-5,3) -- (-5,4);
  \draw (-5,3) -- (-4.5,4);
  \draw (-4,3) -- (-4.5,4);
  \draw (-4,3) -- (-4,4);
  \draw (-4,3) -- (-3.5,4);
  \draw (-3,3) -- (-3.5,4);
  \draw (-3,3) -- (-3,4);
  \draw (-3,3) -- (-2.5,4);
  \draw (-2,3) -- (-2.5,4);
  \draw (-2,3) -- (-2,4);
  \draw (-2,3) -- (-1.5,4);
  \draw (-1,3) -- (-1.5,4);
  \draw (-1,3) -- (-1,4);
  \draw (-1,3) -- (-0.5,4);
  \draw (0,3) -- (-0.5,4);
  \draw (0,3) -- (0,4);
  \draw (0,3) -- (0.5,4);
  \draw (1,3) -- (0.5,4);
  \draw (1,3) -- (1,4);
  \draw (1,3) -- (1.5,4);
  \draw (2,3) -- (1.5,4);
  \draw (2,3) -- (2,4);
  \draw (2,3) -- (2.5,4);
  \draw (3,3) -- (2.5,4);
  \draw (3,3) -- (3,4);
  \draw (3,3) -- (3.5,4);
  \draw (4,3) -- (3.5,4);
  \draw (4,3) -- (4,4);
  \draw (4,3) -- (4.5,4);
  \draw (5,3) -- (4.5,4);
  \draw (5,3) -- (5,4);
  \draw (5,3) -- (5.5,4);
  \draw (6,3) -- (5.5,4);
  \draw (6,3) -- (6,4);
  \draw (6,3) -- (6.5,4);
  \draw (7,3) -- (6.5,4);
  \draw (7,3) -- (7,4);
  \draw (7,3) -- (7.5,4);
  %
  %
  \draw [] (0,0)  node[inner sep=2pt, fill=white] {$1$};
  %
  %
  \draw [] (-4,1) node[inner sep=2pt, fill=white] {$1$};
  \draw [] (0,1)  node[inner sep=2pt, fill=white] {$2$};
  \draw [] (4,1)  node[inner sep=2pt, fill=white] {$3$};
  %
  %
  \draw [] (-6,2)   node[inner sep=2pt, fill=white] {$1$};
  \draw [] (-4,2)   node[inner sep=2pt, fill=white] {$2$};
  \draw [] (-2,2)   node[inner sep=2pt, fill=white] {$3$};
  \draw [] (0,2)    node[inner sep=2pt, fill=white] {$4$};
  \draw [] (2,2)    node[inner sep=2pt, fill=white] {$5$};
  \draw [] (4,2)    node[inner sep=2pt, fill=white] {$6$};
  \draw [] (6,2)    node[inner sep=2pt, fill=white] {$7$};
  %
  %
  \draw [] (-7,3)   node[inner sep=2pt, fill=white] {$1$};
  \draw [] (-6,3)   node[inner sep=2pt, fill=white] {$2$};
  \draw [] (-5,3)   node[inner sep=2pt, fill=white] {$3$};
  \draw [] (-4,3)   node[inner sep=2pt, fill=white] {$4$};
  \draw [] (-3,3)   node[inner sep=2pt, fill=white] {$5$};
  \draw [] (-2,3)   node[inner sep=2pt, fill=white] {$6$};
  \draw [] (-1,3)   node[inner sep=2pt, fill=white] {$7$};
  \draw [] (0,3)    node[inner sep=2pt, fill=white] {$8$};
  \draw [] (1,3)    node[inner sep=2pt, fill=white] {$9$};
  \draw [] (2,3)    node[inner sep=2pt, fill=white] {$10$};
  \draw [] (3,3)    node[inner sep=2pt, fill=white] {$11$};
  \draw [] (4,3)    node[inner sep=2pt, fill=white] {$12$};
  \draw [] (5,3)    node[inner sep=2pt, fill=white] {$13$};
  \draw [] (6,3)    node[inner sep=2pt, fill=white] {$14$};
  \draw [] (7,3)    node[inner sep=2pt, fill=white] {$15$};
\end{tikzpicture}
\caption{The numbering of the nodes of $\TSeq$} \label{fig:Numbering}
\end{figure}

Consider the ternary tree $\TSeq$.
To each node $s \in \TSeq$ we assign a number $N(s)$ as follows (see Figure \ref{fig:Numbering}):
\begin{align*}
  N(\nil) &\defeql  1,&\quad
  N(s * \seq{i}) &\defeql 2 N(s) + (i - 1) \quad (i \in \Ter).
\end{align*}
Each path $\alpha$ in $\TSeq$ (i.e., $\alpha \in \TTree$)
determines a regular sequence
$x_{\alpha}$ in $\UInt$ by
\[
  x_{\alpha} \defeql \seq{2^{-(n+1)} N(\overline{\alpha}
n)}_{n \in \Nat}.
\]
Write $x^{n}_{\alpha}$ for the $n$-th term of $x_{\alpha}$, i.e., 
\[
  x_{\alpha}^{n} \defeql 2^{-(n+1)} N(\overline{\alpha} n).
\]
\begin{lemma}
  \label{lem:RepresentationApprox}
  For each $\alpha \in \TTree$ and $n \in \Nat$, we have
  $
  |x_{\alpha} - x^{n}_{\alpha} | \leq 2^{-(n+1)}.
  $
\end{lemma}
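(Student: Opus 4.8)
The plan is to reduce the claimed inequality between real numbers to a termwise estimate on the rational sequence $x_{\alpha}$. First I would compute the one‑step increment of $x_{\alpha}$. Using $\overline{\alpha}(n+1) = \overline{\alpha}n * \seq{\alpha_{n}}$ together with the recursion $N(s * \seq{i}) = 2N(s) + (i-1)$, one obtains
\[
  x^{m+1}_{\alpha}
  = 2^{-(m+2)}\bigl(2 N(\overline{\alpha}m) + \alpha_{m} - 1\bigr)
  = x^{m}_{\alpha} + 2^{-(m+2)}(\alpha_{m} - 1),
\]
and since $\alpha_{m} \in \Ter$ we have $|\alpha_{m} - 1| \leq 1$, hence $|x^{m}_{\alpha} - x^{m+1}_{\alpha}| \leq 2^{-(m+2)}$. (This is sharper than the bound $2^{-(m+1)}$ that mere regularity of $x_{\alpha}$ would give, and this extra factor of $2$ is exactly what produces $2^{-(n+1)}$ rather than $2^{-n}$ in the conclusion.)

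Next, telescoping and summing the resulting geometric series, I would prove the auxiliary estimate
\[
  |x^{j}_{\alpha} - x^{n}_{\alpha}|
  \leq \sum_{l=n}^{j-1} |x^{l+1}_{\alpha} - x^{l}_{\alpha}|
  \leq \sum_{l=n}^{j-1} 2^{-(l+2)}
  \leq 2^{-(n+1)}
  \qquad (j \geq n).
\]

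Finally I would unfold the left‑hand side of the lemma as a regular sequence. Regarding $x^{n}_{\alpha}$ as the constant regular sequence $\seq{x^{n}_{\alpha}}_{m \in \Nat}$, the definitions of $+$, $-$ and $|\cdot|$ on regular sequences show that $|x_{\alpha} - x^{n}_{\alpha}|$ is the regular sequence whose $m$‑th term is $|x^{m+1}_{\alpha} - x^{n}_{\alpha}|$, while $2^{-(n+1)}$ is the constant sequence with $m$‑th term $2^{-(n+1)}$. By the auxiliary estimate (applied with $j = m+1 \geq n$), these two sequences satisfy $|x^{m+1}_{\alpha} - x^{n}_{\alpha}| \leq 2^{-(n+1)}$ for every $m \geq n$, so choosing the witness $N = n$ (for every $k$) in the characterisation $\seq{a_{m}} \leq \seq{b_{m}} \leftrightarrow \forall k \exists N \forall l\,(a_{N+l} - b_{N+l} \leq 2^{-k})$ recorded in Section~\ref{sec:RegularSequences} immediately gives $|x_{\alpha} - x^{n}_{\alpha}| \leq 2^{-(n+1)}$. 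There is no genuine obstacle here: the only substantive point is the sharpened one‑step bound $|x^{m}_{\alpha} - x^{m+1}_{\alpha}| \leq 2^{-(m+2)}$, and the only thing needing care is the bookkeeping with the index shift in the definition of addition of regular sequences when passing from the real‑number inequality to the termwise one.
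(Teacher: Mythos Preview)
Your proof is correct and follows essentially the same approach as the paper: both compute the one-step increment $|x^{m}_{\alpha} - x^{m+1}_{\alpha}| \leq 2^{-(m+2)}$ from the recursion for $N$, telescope to bound $|x^{n}_{\alpha} - x^{n+m}_{\alpha}|$ by $2^{-(n+1)}$, and conclude. You are more explicit than the paper about the final passage from the termwise bound to the real-number inequality via the regular-sequence definitions, but this is only added detail, not a different strategy.
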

\begin{proof}
Note that
\begin{align*}
  |x^{n}_{\alpha} - x^{n+1}_{\alpha}|
  &= 2^{-(n+2)}|2N(\overline{\alpha}n) - N(\overline{\alpha}(n+1))|
  \\
  &= 2^{-(n+2)}|2N(\overline{\alpha}n) - (2N(\overline{\alpha}n) +
  (\alpha(n) - 1))| \\
  &\leq 2^{-(n+2)}.
\end{align*}
Hence $|x^{n}_{\alpha} - x^{n+m}_{\alpha}| < 2^{-(n+1)}$ for all
$n,m \in \Nat$. Thus
  $
    |x_{\alpha} - x^{n}_{\alpha} | \leq 2^{-(n+1)}
  $
for all $n \in \Nat$.
\end{proof}

\begin{corollary}
\label{cor:UnifContPhi}
The function $\Phi \colon \TTree \to \UInt$ defined by 
\[
  \Phi(\alpha) \defeql x_{\alpha}
\]
is uniformly continuous with modulus $k \mapsto k$.
\end{corollary}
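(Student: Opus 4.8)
The plan is to reduce the statement to Lemma~\ref{lem:RepresentationApprox} together with the elementary observation that the rational approximant $x_{\alpha}^{k}$ depends only on $\overline{\alpha}k$. Recall that by definition we must exhibit a function $\omega \colon \Nat \to \Nat$ such that $\overline{\alpha}\omega(k) = \overline{\beta}\omega(k)$ implies $|x_{\alpha} - x_{\beta}| \leq 2^{-k}$ for all $\alpha, \beta \in \TTree$, and the claim is that $\omega(k) = k$ works.

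First I would fix $k \in \Nat$ and $\alpha, \beta \in \TTree$ with $\overline{\alpha}k = \overline{\beta}k$. Since $x_{\alpha}^{k} = 2^{-(k+1)} N(\overline{\alpha}k)$ and $x_{\beta}^{k} = 2^{-(k+1)} N(\overline{\beta}k)$, the hypothesis gives $x_{\alpha}^{k} = x_{\beta}^{k}$ as rational numbers, hence as real numbers via the embedding $r \mapsto \seq{r}_{n \in \Nat}$. Applying Lemma~\ref{lem:RepresentationApprox} to $\alpha$ and to $\beta$ with $n = k$ and using the triangle inequality on $\Real$,
\[
  |x_{\alpha} - x_{\beta}| \leq |x_{\alpha} - x_{\alpha}^{k}| + |x_{\beta}^{k} - x_{\beta}| \leq 2^{-(k+1)} + 2^{-(k+1)} = 2^{-k}.
\]
Since $k$ and $\alpha, \beta$ were arbitrary, this shows that $\Phi$ is uniformly continuous with modulus $k \mapsto k$.

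The only points that require a little care are the bookkeeping between the two numeral representations: that $|x_{\alpha} - x_{\alpha}^{k}|$ is to be read in $\Real$ with $x_{\alpha}^{k}$ embedded as a constant regular sequence, that the triangle inequality and the identity $x_{\alpha}^{k} = x_{\beta}^{k}$ are invariant under $\simeq$, and (if one wishes to spell it out) that $x_{\alpha}$ genuinely lies in $\UInt$, which follows since $0 \leq 2^{-(n+1)} N(\overline{\alpha}n) \leq 1$ for every $n$ by a routine induction on the numbering $N$. None of this constitutes a real obstacle; once Lemma~\ref{lem:RepresentationApprox} is in hand, the corollary is essentially immediate.
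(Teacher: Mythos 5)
Your argument is correct and is exactly the paper's proof: fix $k$ with $\overline{\alpha}k = \overline{\beta}k$, note $x_{\alpha}^{k} = x_{\beta}^{k}$, and apply Lemma~\ref{lem:RepresentationApprox} twice with the triangle inequality. The additional bookkeeping remarks are accurate but not needed beyond what the paper itself takes for granted.
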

\begin{proof}
  Let $n\in \Nat$ and $\alpha,\beta \in \TTree$, and suppose that
  $\overline{\alpha}n = \overline{\beta}n$. Then
  $x^{n}_{\alpha} = x^{n}_{\beta}$, so
   by Lemma \ref{lem:RepresentationApprox}, we have
  \[
    |\Phi(\alpha) - \Phi(\beta) | 
    \leq |x_{\alpha} - x_{\alpha}^{n}| + |x_{\beta} - x_{\beta}^{n}|
    \leq 2^{-(n+1)} + 2^{-(n+1)} = 2^{-n}.
    \qedhere
  \]
\end{proof}

To each node $s \in \TSeq$, assign an interval $\RInt_{s}$ with rational
endpoints (see Figure \ref{fig:Interval}):
\begin{equation}
  \label{eq:AssignmentInterval}
  \RInt_{s} \defeql \left[2^{-(|s| + 1)} (N(s) - 1), 2^{-(|s| + 1)}
  (N(s) + 1) \right].
\end{equation}
Note that the length of $\RInt_{s}$ is $2^{-|s|}$ and the length of
the overlapping
area of 
adjacent intervals $\RInt_{s*\seq{i}}$ and $\RInt_{s*\seq{i+1}}$ is 
$2^{-(|s| + 2)}$.

\begin{figure}[tb]
  \centering
\begin{tikzpicture}[xscale=0.8,yscale=1.3]
  %
  %
  \draw (0,0) -- (-4,1);
  \draw (0,0) -- (0,1);
  \draw (0,0) -- (4,1);
  %
  %
  \draw (-4,1) -- (-6,2);
  \draw (-4,1) -- (-4,2);
  \draw (-4,1) -- (-2,2);
  \draw (0,1) -- (-2,2);
  \draw (0,1) -- (0,2);
  \draw (0,1) -- (2,2);
  \draw (4,1) -- (2,2);
  \draw (4,1) -- (4,2);
  \draw (4,1) -- (6,2);
  %
  %
  \draw (-6,2) -- (-7,3);
  \draw (-6,2) -- (-6,3);
  \draw (-6,2) -- (-5,3);
  \draw (-4,2) -- (-5,3);
  \draw (-4,2) -- (-4,3);
  \draw (-4,2) -- (-3,3);
  \draw (-2,2) -- (-3,3);
  \draw (-2,2) -- (-2,3);
  \draw (-2,2) -- (-1,3);
  \draw (0,2) -- (-1,3);
  \draw (0,2) -- (0,3);
  \draw (0,2) -- (1,3);
  \draw (2,2) -- (1,3);
  \draw (2,2) -- (2,3);
  \draw (2,2) -- (3,3);
  \draw (4,2) -- (3,3);
  \draw (4,2) -- (4,3);
  \draw (4,2) -- (5,3);
  \draw (6,2) -- (5,3);
  \draw (6,2) -- (6,3);
  \draw (6,2) -- (7,3);
  %
  \draw [] (0,0)  node[inner sep=1pt, fill=white] {$[0,1]$};
  %
  %
  \draw [] (-4,1) node[inner sep=1pt, fill=white]
  {$[0,\frac{1}{2}]$};
  \draw [] (0,1)  node[inner sep=1pt, fill=white]
  {$[\frac{1}{4}, \frac{3}{4}]$};
  \draw [] (4,1)  node[inner sep=1pt, fill=white] 
  {$[\frac{1}{2}, 1]$};
  %
  %
  \draw [] (-6,2)   node[inner sep=1pt, fill=white]
  {$[0, \frac{1}{4}]$};
  \draw [] (-4,2)   node[inner sep=1pt, fill=white]
  {$[\frac{1}{8}, \frac{3}{8}]$};
  \draw [] (-2,2)   node[inner sep=1pt, fill=white]
  {$[\frac{1}{4}, \frac{1}{2}]$};
  \draw [] (0,2)    node[inner sep=1pt, fill=white]
  {$[\frac{3}{8}, \frac{5}{8}]$};
  \draw [] (2,2)    node[inner sep=1pt, fill=white]
  {$[\frac{1}{2}, \frac{3}{4}]$};
  \draw [] (4,2)    node[inner sep=1pt, fill=white] 
  {$[\frac{5}{8}, \frac{7}{8}]$};
  \draw [] (6,2)    node[inner sep=1pt, fill=white] 
  {$[\frac{3}{4}, 1]$};
  %
  %
  \draw [above] (-7,3)   node[ font=\scriptsize] 
  {$[0, \frac{1}{8}]$};
  \draw [above] (-6,3)   node[ font=\scriptsize] 
  {$[\frac{1}{16}, \frac{3}{16}]$};
  \draw [above] (-5,3)   node[ font=\scriptsize] 
  {$[\frac{1}{8}, \frac{1}{4}]$};
  \draw [above] (-4,3)   node[ font=\scriptsize] 
  {$[\frac{3}{16}, \frac{5}{16}]$};
  \draw [above] (-3,3)   node[ font=\scriptsize] 
  {$[\frac{1}{4}, \frac{3}{8}]$};
  \draw [above] (-2,3)   node[ font=\scriptsize] 
  {$[\frac{5}{16}, \frac{7}{16}]$};
  \draw [above] (-1,3)   node[ font=\scriptsize] 
  {$[\frac{3}{8}, \frac{1}{2}]$};
  \draw [above] (0,3)    node[ font=\scriptsize] 
  {$[\frac{7}{16}, \frac{9}{16}]$};
  \draw [above] (1,3)    node[ font=\scriptsize] 
  {$[\frac{1}{2}, \frac{5}{8}]$};
  \draw [above] (2,3)    node[ font=\scriptsize] 
  {$[\frac{9}{16}, \frac{11}{16}]$};
  \draw [above] (3,3)    node[ font=\scriptsize] 
  {$[\frac{5}{8}, \frac{3}{4}]$};
  \draw [above] (4,3)    node[ font=\scriptsize] 
  {$[\frac{11}{16}, \frac{13}{16}]$};
  \draw [above] (5,3)    node[ font=\scriptsize] 
  {$[\frac{3}{4}, \frac{7}{8}]$};
  \draw [above] (6,3)    node[ font=\scriptsize] 
  {$[\frac{13}{16}, \frac{15}{16}]$};
  \draw [above] (7,3)    node[ font=\scriptsize] 
  {$[\frac{7}{8}, 1]$};
\end{tikzpicture}
\caption{The assignment of closed intervals} \label{fig:Interval}
\end{figure}
 
Given a regular sequence $x = \seq{r_{n}}_{n \in \Nat}$ in $\UInt$,
define a sequence $\seq{\RInt^{x}_{n}}_{n \in \Nat}$ of rational
intervals by
\[
  \RInt^{x}_{n} \defeql \left[ \max\{r_{n+3} - 2^{-(n+3)},0\}, 
  \min\{ r_{n+3} + 2^{-(n+3)}, 1\} \right].
\]
For each $n \in \Nat$, the length of $\RInt^{x}_{n}$ is
less than $2^{-(n+2)}$, which is the length of the 
 overlapping area of $\RInt_{s*\seq{i}}$  and $\RInt_{s*\seq{i+1}}$
for some $s \in \TSeq$ and $i \in \Bin$ such that $\lh{s} = n$.
Thus, there exists $t \in \TSeq$ of length $n+1$ such that
$\RInt_{n}^{x} \sqsubseteq \RInt_{t}$.
By primitive recursion, we can thus define a path $\alpha_{x} \in
\TTree$ as follows:
\begin{equation}
    \label{def:FromRegularToPath}
  \begin{aligned}
    \alpha_{x}(0) &\defeql \text{the least $i \in \Ter$ such that
      $\RInt^{x}_{0} \sqsubseteq \RInt_{\seq{i}}$},\\
      \alpha_{x}(n+1) &\defeql \text{the least $i \in \Ter$ such that
        $\RInt^{x}_{n+1} \sqsubseteq \RInt_{\seq{\alpha_{x}(0),
        \dots,\alpha_{x}(n), i}}$}.
      \end{aligned}
    \end{equation}
By induction, one can show that
\begin{equation}
  \label{eq:PathfromTree}
  \RInt_{n}^{x} \sqsubseteq \RInt_{\overline{\alpha_{x}}(n+1)}
\end{equation}
for all $n \in \Nat$.
Note that the mapping $x \mapsto \alpha_{x}$ does not
preserve the equality on
$\Real$, and thus it is not a function on $\UInt$.%
\footnote{For example, consider $\seq{1/2 +
2^{-(n+1)}}_{n \in \Nat}$ and $\seq{1/2 - 2^{-(n+1)}}_{n \in \Nat}$.}

The following proposition states that every real number in $\UInt$ can
be represented by a path in $\TSeq$ via $\Phi$.
\begin{proposition}
  \label{prop:Surject}
  For each real number $x$ in $\UInt$,
  we have $x \simeq \Phi(\alpha_{x})$.
\end{proposition}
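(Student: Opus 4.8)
The plan is to show that the two regular sequences $x = \seq{r_{n}}_{n \in \Nat}$ and $\Phi(\alpha_{x}) = x_{\alpha_{x}}$ agree within $2^{-(n+1)}$ for every $n$, and then conclude $x \simeq \Phi(\alpha_{x})$. The key geometric observation is that, by \eqref{eq:AssignmentInterval}, the rational interval $\RInt_{\overline{\alpha_{x}}(n+1)}$ has length $2^{-(n+1)}$ and midpoint $2^{-(n+2)}N(\overline{\alpha_{x}}(n+1)) = x^{n+1}_{\alpha_{x}}$. Hence, once we know that $x$ lies in $\RInt_{\overline{\alpha_{x}}(n+1)}$, the distance from $x$ to the midpoint of that interval is at most half its length, which gives $|x - x^{n+1}_{\alpha_{x}}| \leq 2^{-(n+2)}$.

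The first step is therefore to locate $x$ inside $\RInt_{\overline{\alpha_{x}}(n+1)}$. I would first check that $x \in \RInt^{x}_{n}$, i.e.\ $\fst{\RInt^{x}_{n}} \leq x \leq \snd{\RInt^{x}_{n}}$: since $x$ is a regular sequence we have $|x - r_{n+3}| \leq 2^{-(n+3)}$ (a standard property of regular sequences; cf.\ Proposition~\ref{prop:OrderBiject}(2)), so $r_{n+3} - 2^{-(n+3)} \leq x \leq r_{n+3} + 2^{-(n+3)}$, and combining this with $0 \leq x \leq 1$ (as $x \in \UInt$) yields $\max\{r_{n+3} - 2^{-(n+3)}, 0\} \leq x \leq \min\{r_{n+3} + 2^{-(n+3)}, 1\}$, i.e.\ $x \in \RInt^{x}_{n}$. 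Then \eqref{eq:PathfromTree}, namely $\RInt^{x}_{n} \sqsubseteq \RInt_{\overline{\alpha_{x}}(n+1)}$, gives $\fst{\RInt_{\overline{\alpha_{x}}(n+1)}} \leq x \leq \snd{\RInt_{\overline{\alpha_{x}}(n+1)}}$, and hence $|x - x^{n+1}_{\alpha_{x}}| \leq 2^{-(n+2)}$ by the observation above.

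For the second step, Lemma~\ref{lem:RepresentationApprox} applied at $n+1$ gives $|x_{\alpha_{x}} - x^{n+1}_{\alpha_{x}}| \leq 2^{-(n+2)}$, so by the triangle inequality $|x - \Phi(\alpha_{x})| = |x - x_{\alpha_{x}}| \leq 2^{-(n+1)}$ for every $n \in \Nat$. From here one concludes $x \simeq \Phi(\alpha_{x})$; this is routine, e.g.\ combining $|x - x_{\alpha_{x}}| \leq 2^{-(n+1)}$ with $|x - r_{i}| \leq 2^{-i}$ and Lemma~\ref{lem:RepresentationApprox} one obtains, for each $k$, an index $n$ with $|r_{n+m} - x^{n+m}_{\alpha_{x}}| \leq 2^{-k}$ for all $m$, and then \eqref{eq:RegEquality} yields the equality of the two regular sequences.

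I do not expect a genuine obstacle here: the proposition is essentially a matter of unwinding the definitions set up immediately before its statement. The two points requiring care are the boundary truncations $\max\{\cdot,0\}$ and $\min\{\cdot,1\}$ in the definition of $\RInt^{x}_{n}$, which is why one must invoke $0 \leq x \leq 1$ explicitly when placing $x$ in $\RInt^{x}_{n}$, and the final passage from ``$|x - x_{\alpha_{x}}| \leq 2^{-(n+1)}$ for all $n$'' to ``$x \simeq x_{\alpha_{x}}$'', which should be carried out via \eqref{eq:RegEquality} (or directly from the definitions of $\leq$ and $\simeq$ on regular sequences) rather than by a constructively illegitimate trichotomy between $x < x_{\alpha_{x}}$, $x \simeq x_{\alpha_{x}}$ and $x_{\alpha_{x}} < x$.
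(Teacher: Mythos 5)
Your proposal is correct and follows essentially the same route as the paper's proof: both rest on \eqref{eq:PathfromTree} together with the observation that $x^{n+1}_{\alpha_{x}}$ is the midpoint of the interval $\RInt_{\overline{\alpha_{x}}(n+1)}$ of half-length $2^{-(n+2)}$, and both invoke $0 \leq x \leq 1$ to deal with the truncations in $\RInt^{x}_{n}$. The only difference is cosmetic: the paper bounds $|r_{n+1} - \Phi(\alpha_{x})_{n+1}|$ termwise so as to verify \eqref{def:RegularEqual} directly, whereas you bound $|x - \Phi(\alpha_{x})|$ as real numbers and then pass back to termwise estimates via \eqref{eq:RegEquality}, a small extra conversion step that you flag and handle correctly.
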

\begin{proof}
  Let $x = \seq{r_{n}}_{n \in \Nat}$ be a regular sequence in
  $\UInt$. 
  Fix $n \in \Nat$.  Since $0 \leq x \leq 1$, we have $-2^{-(n+2)}
  \leq r_{n+3} \leq 1 + 2^{-(n+2)}$. This, together with \eqref{eq:PathfromTree}, 
  implies
  \[
    |r_{n+3} - 2^{-(n+2)}N(\overline{\alpha_{x}}(n+1))| \leq 2^{-(n+1)}.
  \]
  Thus 
  \begin{align*}
    |r_{n+1} - \Phi(\alpha_{x})_{n+1}|
    &=
    |r_{n+1} - 2^{-(n+2)}N(\overline{\alpha_{x}}(n+1))| \\
    &\leq |r_{n+1} - r_{n+3} | + | r_{n+3} -
    2^{-(n+2)}N(\overline{\alpha_{x}}(n+1))| \\
    &\leq 2^{-(n+2)} + 2^{-(n+3)} + 2^{-(n+1)}
    < 2^{-n}.
  \end{align*}
  Therefore $x \simeq \Phi(\alpha_{x})$.
\end{proof}
Our next aim is to prove the quotient property of $\Phi$ (see
Proposition \ref{prop:Quotient}).
Let $\rho \colon \Ter^{3} \to \Ter^{3}$ be the function which is an
identity on $\Ter^{3} $ except for the following patterns:
\begin{equation}
  \label{eq:Pattern}
  \begin{aligned}
    \seq{1, 2, 2} &\stackrel{\rho}{\mapsto} \seq{2, 0, 2} &\qquad\qquad
    \seq{1, 0, 0} &\stackrel{\rho}{\mapsto} \seq{0, 2, 0} \\
    \seq{0, 2, 2} &\stackrel{\rho}{\mapsto} \seq{1, 0, 2} &\qquad\qquad
    \seq{2, 0, 0} &\stackrel{\rho}{\mapsto} \seq{1, 2, 0} 
  \end{aligned}
\end{equation}
The function $\rho$ is extended to $\rho \colon \TTree \to \TTree$ by
primitive recursion:
\begin{equation}
  \label{def:rho}
  \rho(\alpha) \defeql \lambda n. (\sigma^{n}_{\alpha})_{0},
\end{equation}
where $\sigma^{n}_{\alpha} \in \Ter^{3}$ is defined by
\begin{align*}
  \sigma^{0}_{\alpha} 
  &\defeql \rho(\alpha_{0}, \alpha_{1}, \alpha_{2}), &
  \sigma^{n+1}_{\alpha} 
  &\defeql \rho((\sigma^{n}_{\alpha})_{1}, \alpha_{n+2}, \alpha_{n + 3}).
\end{align*}

\begin{lemma}
  \label{lem:RhoImpossiblePattern}
  For any  $\alpha \in \TTree$, $n \in \Nat$, and $i \in \left\{
    0,2 \right\}$,
    \[
      \alpha_{n} \neq i \imp
      \forall m \geq n \left( \seq{\rho(\alpha)_{m}, \rho(\alpha)_{m+1}, \rho(\alpha)_{m+2}} 
      \neq \seq{i,i,i} \right).
    \]
\end{lemma}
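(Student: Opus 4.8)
The plan is to make visible the single ``carry'' that $\rho$ propagates along $\alpha$ from left to right. Set $c_{0} \defeql \alpha_{0}$ and $c_{m+1} \defeql \rho(c_{m},\alpha_{m+1},\alpha_{m+2})_{1}$. A routine induction on $m$, using \eqref{def:rho} and the recursion defining $\sigma^{m}_{\alpha}$, shows $\sigma^{m}_{\alpha} = \rho(c_{m},\alpha_{m+1},\alpha_{m+2})$ for all $m$, and hence
\[
  \rho(\alpha)_{m} = \rho(c_{m},\alpha_{m+1},\alpha_{m+2})_{0}
  \quad\text{and}\quad
  c_{m+1} = \rho(c_{m},\alpha_{m+1},\alpha_{m+2})_{1}.
\]
It then suffices to prove: for every $m \in \Nat$ and every $i \in \{0,2\}$, if $\rho(\alpha)_{m} = \rho(\alpha)_{m+1} = \rho(\alpha)_{m+2} = i$, then $\alpha_{k} = i$ for all $k \leq m$. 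Indeed, given $n$ and $m \geq n$, contraposing this (and using $n \leq m$) is exactly the stated implication.

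The argument rests on three facts about the finite function $\rho\colon\Ter^{3}\to\Ter^{3}$. First I would note that, writing $i^{\ast}\defeql 2-i$ for $i\in\{0,2\}$ (so $\{0,1,2\}=\{i,1,i^{\ast}\}$), the rewrites in \eqref{eq:Pattern} are precisely $\seq{1,i,i}\mapsto\seq{i,i^{\ast},i}$ and $\seq{i^{\ast},i,i}\mapsto\seq{1,i^{\ast},i}$ for $i\in\{0,2\}$; in particular $\rho$ acts non-trivially only on triples $\seq{a,b,b}$ with $b\in\{0,2\}$ and $a\neq b$. A short case split on whether the displayed triple is one of these four then establishes, for $i\in\{0,2\}$: \textbf{(F1)} $\rho(i,i,i)=(i,i,i)$; \textbf{(F2)} if $\rho(x,y,i)_{1}=i$, then $x=i$ and $y=i$; \textbf{(F3)} if $\rho(x,y,z)_{0}=i$ and $\rho(\rho(x,y,z)_{1},z,w)_{0}=i$, then $\rho$ fixes $\seq{x,y,z}$ and $x=i$ (so also $\rho(x,y,z)_{1}=y$).

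Now assume $\rho(\alpha)_{m}=\rho(\alpha)_{m+1}=\rho(\alpha)_{m+2}=i$. Applying (F3) with $(x,y,z,w)=(c_{m},\alpha_{m+1},\alpha_{m+2},\alpha_{m+3})$ yields $c_{m}=i$ and $c_{m+1}=\alpha_{m+1}$; applying (F3) again with $m$ replaced by $m+1$ yields $c_{m+1}=i$, so $\alpha_{m+1}=i$. If $m=0$, then $\alpha_{0}=c_{0}=i$. If $m\geq 1$, then $i=c_{m}=\rho(c_{m-1},\alpha_{m},\alpha_{m+1})_{1}=\rho(c_{m-1},\alpha_{m},i)_{1}$, so (F2) gives $c_{m-1}=i$ and $\alpha_{m}=i$, and therefore $\rho(\alpha)_{m-1}=\rho(c_{m-1},\alpha_{m},\alpha_{m+1})_{0}=\rho(i,i,i)_{0}=i$ by (F1); thus the hypothesis holds with $m$ replaced by $m-1$. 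A strong induction on $m$ then gives $\alpha_{k}=i$ for all $k\leq m$, which finishes the proof.

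The step I expect to be the main obstacle is establishing (F3). The point is that $\rho(x,y,z)_{0}$ can be forced to lie in $\{0,2\}$ by a rewrite only through $\seq{1,i,i}\mapsto\seq{i,i^{\ast},i}$, since the other rewrite $\seq{i^{\ast},i,i}\mapsto\seq{1,i^{\ast},i}$ has zeroth output $1\notin\{0,2\}$; in that case the carry $\rho(x,y,z)_{1}$ equals $i^{\ast}$ and $z=i$, and one must check that $\rho(i^{\ast},i,w)_{0}\neq i$ for every $w$, the only non-trivial subcase being $w=i$, where $\rho(i^{\ast},i,i)_{0}=1\neq i$. All remaining verifications are elementary finite computations.
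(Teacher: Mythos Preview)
Your proof is correct, but it is organised quite differently from the paper's. The paper argues by least counterexample: assuming $\alpha_{n}\neq i$ and taking the least $m\geq n$ with $\seq{\rho(\alpha)_{m},\rho(\alpha)_{m+1},\rho(\alpha)_{m+2}}=\seq{i,i,i}$, it runs an ad~hoc case split on whether $\seq{(\sigma^{m-1}_{\alpha})_{1},\alpha_{m+1},\alpha_{m+2}}$ matches a pattern of $\rho$, each branch leading to a contradiction (sometimes via the minimality of $m$). Your approach instead makes the carry $c_{m}=(\sigma^{m-1}_{\alpha})_{1}$ explicit, extracts the three finite combinatorial facts (F1)--(F3) about $\rho$, and proves by descent the stronger statement that three consecutive $i$'s in $\rho(\alpha)$ force $\alpha_{k}=i$ for all $k\leq m$; this is precisely the paper's Corollary~\ref{cor:RhoImpossiblePattern}, from which the lemma follows by contraposition. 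What your route buys is modularity: all the pattern-matching is confined to verifying (F1)--(F3), and the main argument becomes a clean two-step descent (F3 to anchor, F2 to step back, F1 to propagate the hypothesis). The paper's route avoids introducing the auxiliary $c_{m}$ and the abstract facts, at the cost of interleaving the pattern analysis with the contradiction argument. Both are elementary and of comparable length; your decomposition makes the logical structure more transparent.
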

\begin{proof}
  We give a proof for $i = 0$. The proof for $i = 2$ is similar.

  Suppose that $\alpha_{n} \neq 0$.
  Suppose that
  $\seq{\rho(\alpha)_{m}, \rho(\alpha)_{m+1}, \rho(\alpha)_{m+2}} = \seq{0,0,0}$ 
  for some $m \geq n$.
  We may assume that $m$ is the least number $\geq n$ with this property. 
  Then,
  \[
    \rho(\alpha)_{m} 
    = (\sigma^{m}_{\alpha})_{0} 
    = (\rho((\sigma^{m-1}_{\alpha})_{1}, \alpha_{m+1},
    \alpha_{m+2}))_{0} = 0.
  \]
  (if $m = n = 0$, we put $(\sigma^{m-1}_{\alpha})_{1} = \alpha_{n}$).

  If $\seq{(\sigma^{m-1}_{\alpha})_{1}, \alpha_{m+1}, \alpha_{m+2}}$
  matches some pattern of $\rho$, then we must have 
  $(\sigma^{m-1}_{\alpha})_{1} = 1$ and $\seq{\alpha_{m+1}, \alpha_{m+2}} =
  \seq{0,0}$. Then, $( \sigma^{m}_{\alpha} )_{1} = 2$, so
  \[
    \rho(\alpha)_{m+1} 
    = (\rho((\sigma^{m}_{\alpha})_{1}, \alpha_{m+2},
    \alpha_{m+3}))_{0}
    = (\rho(2, \alpha_{m+2}, \alpha_{m+3}))_{0}
    = 0,
  \]
  which is impossible.

  If $\seq{(\sigma^{m-1}_{\alpha})_{1}, \alpha_{m+1}, \alpha_{m+2}}$
  does not match any pattern of $\rho$, then we must have 
  $(\sigma^{m-1}_{\alpha})_{1} = 0$, $\seq{\alpha_{m+1}, \alpha_{m+2}}
  \neq \seq{2,2}$, and $( \sigma^{m}_{\alpha} )_{1} = \alpha_{m+1}$.
  If $m = n = 0$, then $( \sigma^{m-1}_{\alpha} )_{1} = \alpha_{0} = 0$, a
  contradiction. Thus, we may assume $m > 0$.
  Then,
  \[
    \rho(\alpha)_{m+1} 
    = (\rho((\sigma^{m}_{\alpha})_{1}, \alpha_{m+2},
    \alpha_{m+3}))_{0}
    = (\rho(\alpha_{m+1}, \alpha_{m+2}, \alpha_{m+3}))_{0}
    = 0.
  \]
  By the definition of $\rho$, the possibility of $\alpha_{m+1} = 2$, $\seq{\alpha_{m+1},
  \alpha_{m+2}} = \seq{1,1}$, or
  $\seq{\alpha_{m+1}, \alpha_{m+2}} = \seq{1,2}$ is ruled out.
  Moreover, $\alpha_{m+1} = 0$ implies $\sigma^{m-1}_{\alpha} =
  \seq{j,0,0}$ for some $j \in \Ter$. By the definition
  of $\rho$, this implies 
  $\seq{(\sigma^{m-2}_{\alpha})_{1},\alpha_{m},\alpha_{m+1}}$ does not match
  any pattern of $\rho$. Thus 
  $\seq{(\sigma^{m-2}_{\alpha})_{1},\alpha_{m},\alpha_{m+1}} =
  \seq{0,0,0} = \seq{j,0,0}$,
  and so $\rho(\alpha)_{m-1} =( \sigma^{m-1}_{\alpha})_{0} = 0$. If $m-1 \geq n$,
  this contradicts the leastness of $m$. Thus $m-1 < n$, and so
  $m = n$. Then $\alpha_{n} = 0$, a contradiction.

  Hence, the only possibility is $\seq{\alpha_{m+1}, \alpha_{m+2}} = \seq{1,0}$.
  Since $\rho(\alpha)_{m+1} = 0$, we must have
  $\alpha_{m+3} = 0$ and $(\sigma^{m+1}_{\alpha})_{1} = 2$.  Thus
  \[
    \rho(\alpha)_{m+2} 
    = (\rho((\sigma^{m+1}_{\alpha})_{1}, \alpha_{m+3},
    \alpha_{m+4}))_{0}
    = (\rho(2, 0, \alpha_{m+4}))_{0}
    = 0,
  \]
  which is impossible.
\end{proof}

\begin{corollary}
  \label{cor:RhoImpossiblePattern}
  For any  $\alpha \in \TTree$, $n \in \Nat$, and $i \in \left\{
    0,2 \right\}$, 
    \[
   \seq{\rho(\alpha)_{n}, \rho(\alpha)_{n+1}, \rho(\alpha)_{n+2}} = \seq{i,i,i}
   \imp 
   \overline{\alpha}(n+1) = i^{n+1}.
    \]
\end{corollary}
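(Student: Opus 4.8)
The plan is to obtain the corollary directly from Lemma \ref{lem:RhoImpossiblePattern} by taking its contrapositive and making it uniform over the coordinates below $n+1$. So fix $\alpha \in \TTree$, $n \in \Nat$, and $i \in \left\{0,2\right\}$, and assume $\seq{\rho(\alpha)_{n}, \rho(\alpha)_{n+1}, \rho(\alpha)_{n+2}} = \seq{i,i,i}$. Since $\overline{\alpha}(n+1) = i^{n+1}$ just means $\alpha_{j} = i$ for every $j \leq n$, it suffices to prove that single statement for an arbitrary fixed $j \leq n$.

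First I would fix $j \leq n$ and use that equality on $\Ter$ is decidable, so either $\alpha_{j} = i$ or $\alpha_{j} \neq i$. In the first case we are done. In the second case, Lemma \ref{lem:RhoImpossiblePattern} applied with the index $j$ (its hypothesis $\alpha_{j} \neq i$ being exactly what we have) gives $\seq{\rho(\alpha)_{m}, \rho(\alpha)_{m+1}, \rho(\alpha)_{m+2}} \neq \seq{i,i,i}$ for all $m \geq j$. Instantiating $m := n$, which is allowed because $n \geq j$, contradicts the standing assumption $\seq{\rho(\alpha)_{n}, \rho(\alpha)_{n+1}, \rho(\alpha)_{n+2}} = \seq{i,i,i}$. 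Hence $\alpha_{j} = i$, and since $j \leq n$ was arbitrary, $\overline{\alpha}(n+1) = i^{n+1}$.

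I do not expect any genuine obstacle: the entire combinatorial content already sits in Lemma \ref{lem:RhoImpossiblePattern}, and this corollary is merely its contrapositive read off at the finitely many positions $j \leq n$. The only point worth a moment's attention in the constructive setting is the case split $\alpha_{j} = i$ versus $\alpha_{j} \neq i$, which is unproblematic since $\Ter$ has decidable equality; no choice principle or excluded middle beyond that is needed.
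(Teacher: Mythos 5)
Your proof is correct and is exactly the derivation the paper intends: the corollary is stated without proof as the contrapositive of Lemma \ref{lem:RhoImpossiblePattern}, read off at each index $j \leq n$ (instantiating the lemma's quantifier at $m = n \geq j$), with the harmless decidable case split on $\alpha_j = i$ making it constructively unproblematic.
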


The following is intuitively obvious.
\begin{lemma}
  \label{lem:Numbering}
  Let $a,b\in \TSeq$ such that $N(a) = N(b)$. Then, for any $n \in
  \Nat$, we have
  \begin{equation}
    \label{eq:Numbering}
    \forall c,d \in \Ter^{n} \forall k \in \Int
    \left( N(c) + k = N(d) \imp N(a * c) + k = N(b* d) 
    \right).
  \end{equation}
\end{lemma}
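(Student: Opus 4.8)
The plan is to reduce everything to a single arithmetical identity describing how $N$ behaves under concatenation, after which the statement becomes a one-line substitution. First I would prove, by induction on the length of $t$, that
\[
  N(s * t) = 2^{\lh{t}}(N(s) - 1) + N(t)
\]
for all $s,t \in \TSeq$. The base case $\lh{t} = 0$ is immediate, since $N(s * \nil) = N(s)$ and $2^{0}(N(s) - 1) + N(\nil) = N(s)$. For the inductive step, consider $t * \seq{i}$ with $i \in \Ter$; using the defining clause $N(u * \seq{i}) = 2N(u) + (i - 1)$, the induction hypothesis applied to $s * t$, and the induction hypothesis applied to $t$ (i.e.\ with $\nil$ in place of $s$), one computes
\[
\begin{aligned}
  N(s * (t * \seq{i})) &= 2 N(s * t) + (i - 1) \\
  &= 2^{\lh{t} + 1}(N(s) - 1) + 2 N(t) + (i - 1) \\
  &= 2^{\lh{t} + 1}(N(s) - 1) + N(t * \seq{i}),
\end{aligned}
\]
which is the claim for $t * \seq{i}$.

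Granting this identity, the lemma follows directly. Fix $n \in \Nat$, $c, d \in \Ter^{n}$, and $k \in \Int$ with $N(c) + k = N(d)$. Applying the identity to $a * c$ and to $b * d$ and using $N(a) = N(b)$, we obtain
\[
  N(a * c) + k = 2^{n}(N(a) - 1) + N(c) + k = 2^{n}(N(b) - 1) + N(d) = N(b * d),
\]
as required. This establishes \eqref{eq:Numbering}.

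I do not expect any genuine obstacle here. The only points worth keeping in mind are that the induction on $\lh{t}$ must be carried out uniformly in $s$ (the hypothesis is used both with $s * t$ and with $\nil$ as the prefix), and that the sign of $k$ is irrelevant, since $k$ is merely added to both sides throughout. Conceptually the identity just records that $N(s)$ is, up to the affine shift by $2^{\lh{s}}$, the base-two numeral naming the position of the node $s$ within its level of $\TSeq$, so that concatenating a fixed suffix acts affinely on these numerals; this is exactly the content the lemma describes as "intuitively obvious".
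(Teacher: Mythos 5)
Your proof is correct, but it takes a different route from the paper's. The paper proves \eqref{eq:Numbering} directly by induction on $n$ with $a,b$ fixed: in the inductive step it peels off the last entries $i,j$ of $c*\seq{i}$ and $d*\seq{j}$, observes that the hypothesis $2N(c)+(i-1)+k = 2N(d)+(j-1)$ forces $\bigl((i-j)+k\bigr)/2$ to be an integer, applies the induction hypothesis with that shifted offset, and then recombines. You instead first establish the closed-form concatenation identity $N(s*t) = 2^{\lh{t}}(N(s)-1)+N(t)$ by induction on $\lh{t}$, after which the lemma is a two-line subtraction; your identity is correct (both the base case and the step check out), and the final computation $N(b*d)-N(a*c) = 2^{n}(N(b)-N(a)) + N(d)-N(c) = k$ is exactly right. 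Your version buys a reusable, more informative statement (it makes explicit that appending a suffix acts affinely on the level numbering) and avoids the slightly delicate parity bookkeeping in the paper's inductive step; the paper's version stays entirely inside the recursive definition of $N$ without introducing powers of two. One cosmetic remark: in your inductive step the ``induction hypothesis applied to $t$ with $\nil$ in place of $s$'' is not actually needed --- the equality $N(t*\seq{i}) = 2N(t)+(i-1)$ is just the defining clause of $N$ --- but this does not affect the validity of the argument.
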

\begin{proof}
  Fix $a,b \in \TSeq$ such that $N(a) = N(b)$.
  We show \eqref{eq:Numbering} by induction on $n$. 
  The base case ($n = 0$) is trivial.
  For the inductive case ($n = n' + 1$), let $c,d \in \Ter^{n'}$, 
  $i,j \in \Ter$ and $k \in \Int$, and suppose that 
  $N(c*\seq{i}) + k = N(d*\seq{j})$. Then, 
    $2N(c) + (i - 1) + k = 2N(d) + (j - 1)$. Thus
    $
    N(c) + \frac{(i-j) + k}{2} = N(d),
    $
  where $\left((i-j) + k  \right) / 2$ is an integer. By induction
  hypothesis, we have
    $
    N(a*c) + \frac{(i-j) + k}{2} = N(b*d).
    $
  Hence
  \begin{align*}
    N(a*c*\seq{i}) + k 
    &= 
    2N(a*c) + (i - 1) + k \\
    &= 
    2N(a*c) + (i - j) + k + (j - 1)  \\
    &= 
    2N(b*d) + (j - 1)  \\
    &= 
    N(b*d*\seq{j}).
    \qedhere
  \end{align*}
\end{proof}

\begin{lemma}
  \label{lem:ReductionTTree}
  For each $\alpha \in \TTree$ and $n \in \Nat$, we have
  \begin{enumerate}
    \item\label{lem:ReductionTTree1}
      $
      (\sigma^{n}_{\alpha})_{1}
      =
      \begin{cases}
        \alpha_{n+1} & \text{if $N(\overline{\rho(\alpha)}(n+1)) =
        N(\overline{\alpha}(n+1))$}, \\
        2 - \alpha_{n+1} & \text{otherwise}.
      \end{cases}
      $
    \item\label{lem:ReductionTTree2}
      $N(\overline{\alpha}(n+1)) \neq
      N(\overline{\rho(\alpha)}(n+1)) \imp N(\overline{\alpha}(n+2)) = N(\overline{\rho(\alpha)}(n+1) *
      \seq{2-\alpha_{n+1}})$.
  \end{enumerate}
\end{lemma}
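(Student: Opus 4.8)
The plan is to prove both parts simultaneously by induction on $n$, since part~\ref{lem:ReductionTTree2} is really the mechanism that propagates the invariant tracked by part~\ref{lem:ReductionTTree1}. Before starting the induction, I would record the elementary arithmetic of the numbering $N$ that I will use repeatedly: $N(s * \seq{i}) = 2N(s) + (i-1)$, and more generally, if $N(a) + k = N(b)$ for $k \in \Int$, then $N(a * \seq{i}) + (2k + (j - i)) = N(b * \seq{j})$; equivalently, replacing the last digit $i$ by $2 - i$ shifts $N$ by exactly $2 - 2i = -2(i-1)$, i.e.\ $N(s * \seq{2 - i}) = N(s * \seq{i}) - 2(i - 1)$. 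Combined with Lemma~\ref{lem:Numbering}, this tells me that ``the discrepancy between $N(\overline{\alpha}(m))$ and $N(\overline{\rho(\alpha)}(m))$'' is controlled purely by the last digit, and I will phrase the induction hypothesis so that at each stage the two prefixes $\overline{\alpha}(n+1)$ and $\overline{\rho(\alpha)}(n+1)$ either have equal $N$-value, or have $N$-values differing by exactly $\pm 2(\alpha_n - 1)$-type corrections coming from the four exceptional patterns in \eqref{eq:Pattern}.

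The induction itself proceeds by unfolding the recursive definition $\sigma^{n+1}_{\alpha} = \rho((\sigma^{n}_{\alpha})_1, \alpha_{n+2}, \alpha_{n+3})$ and doing a case split on whether the triple $\seq{(\sigma^{n}_{\alpha})_1, \alpha_{n+2}, \alpha_{n+3}}$ matches one of the four patterns of $\rho$ or not. In the non-matching case, $\rho$ acts as the identity on the first coordinate, so $(\sigma^{n+1}_{\alpha})_1 = \alpha_{n+2}$ and $\rho(\alpha)_{n+1} = (\sigma^{n}_{\alpha})_1$; I then feed the induction hypothesis for part~\ref{lem:ReductionTTree1} at stage $n$ together with the last-digit arithmetic above to check that $N(\overline{\rho(\alpha)}(n+2)) = N(\overline{\alpha}(n+2))$ holds iff it held at stage $n+1$, and the ``otherwise'' branch with $(\sigma^{n+1}_{\alpha})_1 = 2 - \alpha_{n+2}$ falls out symmetrically. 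In the matching case, the pattern itself (one of $\seq{1,2,2} \mapsto \seq{2,0,2}$, $\seq{0,2,2}\mapsto\seq{1,0,2}$, $\seq{1,0,0}\mapsto\seq{0,2,0}$, $\seq{2,0,0}\mapsto\seq{1,2,0}$) pins down $(\sigma^{n}_{\alpha})_1$, the values $\alpha_{n+2}, \alpha_{n+3}$, and hence $\rho(\alpha)_{n+1} = (\sigma^n_\alpha)_0$ and $(\sigma^{n+1}_\alpha)_1$ explicitly; a direct computation of $N$ on both sides — using that the substitution $\seq{i,i} \leftrightarrow \seq{j,k}$ at the end of a prefix shifts $N$ by a fixed amount read off from the pattern — verifies the bookkeeping. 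Part~\ref{lem:ReductionTTree2} is then immediate from part~\ref{lem:ReductionTTree1}: when the $N$-values at stage $n+1$ disagree, $(\sigma^n_\alpha)_1 = 2 - \alpha_{n+1}$ by part~\ref{lem:ReductionTTree1}, and since $\rho(\alpha)_{n+1} = (\sigma^{n+1}_\alpha)_0$ while $\overline{\rho(\alpha)}(n+2) = \overline{\rho(\alpha)}(n+1) * \seq{(\sigma^{n}_\alpha)_1}$ by the definition $\rho(\alpha) = \lambda m.(\sigma^m_\alpha)_0$ and the consistency relation $(\sigma^{n}_\alpha)_1$ is the digit appended — one reads off $N(\overline{\alpha}(n+2)) = N(\overline{\rho(\alpha)}(n+1) * \seq{2 - \alpha_{n+1}})$ from the last-digit arithmetic.

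The main obstacle I anticipate is purely organisational rather than conceptual: getting the induction hypothesis phrased tightly enough that the ``matching pattern'' case closes without an unbounded regress. The subtlety is that when a pattern fires at stage $n+1$, the value $(\sigma^{n}_{\alpha})_1$ appearing as the leading digit of the input triple is itself determined by whatever happened at stage $n$, so I must carry along enough information — essentially the exact relationship between $N(\overline{\alpha}(n+1))$, $N(\overline{\rho(\alpha)}(n+1))$, and the digit $(\sigma^{n}_\alpha)_1$ — to resolve the four pattern cases. I expect that tracking the single integer $N(\overline{\alpha}(n+1)) - N(\overline{\rho(\alpha)}(n+1))$ together with the parity/value of the last digit suffices, and that Lemma~\ref{lem:Numbering} does exactly the job of transporting this discrepancy through the common suffix. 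Everything else — the base case $n = 0$, which is a finite check over $\Ter^3$ using the convention $(\sigma^{-1}_\alpha)_1 = \alpha_0$, and the four pattern computations — is routine once the right invariant is fixed.
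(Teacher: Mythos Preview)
Your overall plan --- simultaneous induction on $n$ with a case analysis driven by the recursion $\sigma^{n+1}_\alpha = \rho((\sigma^n_\alpha)_1,\alpha_{n+2},\alpha_{n+3})$ --- is the same as the paper's; the paper organises the cases by whether $N(\overline{\rho(\alpha)}(k+1))=N(\overline{\alpha}(k+1))$ rather than by whether a pattern fires, but these are equivalent bookkeeping choices. Where your proposal goes wrong is the claim that part~\ref{lem:ReductionTTree2} is ``immediate from part~\ref{lem:ReductionTTree1}.''

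Two specific problems. First, your identity $\overline{\rho(\alpha)}(n+2)=\overline{\rho(\alpha)}(n+1)*\seq{(\sigma^n_\alpha)_1}$ is not the definition: the appended digit is $\rho(\alpha)_{n+1}=(\sigma^{n+1}_\alpha)_0$, which equals $(\sigma^n_\alpha)_1$ only when no pattern fires. Second, and more importantly, knowing from part~\ref{lem:ReductionTTree1} that $(\sigma^n_\alpha)_1=2-\alpha_{n+1}$ does \emph{not} determine the integer $N(\overline{\alpha}(n+1))-N(\overline{\rho(\alpha)}(n+1))$; but unwinding the arithmetic shows that part~\ref{lem:ReductionTTree2} at stage $n$ is exactly the statement that this difference equals $1-\alpha_{n+1}$. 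So part~\ref{lem:ReductionTTree2} carries strictly more information than part~\ref{lem:ReductionTTree1}, and you cannot derive it pointwise from part~\ref{lem:ReductionTTree1} alone.

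This is why the paper makes part~\ref{lem:ReductionTTree2} a genuine component of the simultaneous induction: in the inductive step for part~\ref{lem:ReductionTTree1}, the sub-case $N(\overline{\rho(\alpha)}(k+1))\neq N(\overline{\alpha}(k+1))$ \emph{requires} the induction hypothesis of part~\ref{lem:ReductionTTree2} to supply the exact relation $N(\overline{\alpha}(k+2))=N(\overline{\rho(\alpha)}(k+1)*\seq{2-\alpha_{k+1}})$, from which one then reads off $\rho(\alpha)_{k+1}$ and closes the case. Your proposal only invokes the induction hypothesis of part~\ref{lem:ReductionTTree1}; if you try to run your non-matching case with the $N$-values at stage $n+1$ already unequal, you will find you cannot decide whether they re-align at stage $n+2$ without the induction hypothesis of part~\ref{lem:ReductionTTree2}. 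Fix: keep your case structure, but carry the full inductive package --- both parts at stage $n$ --- into each case, and verify both parts at stage $n+1$ in each branch, as the paper does.
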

\begin{proof}
  We show \ref{lem:ReductionTTree1}
  and \ref{lem:ReductionTTree2} by simultaneous induction.
  \smallskip

  \noindent\emph{Base case $(n = 0)$}:
  For \ref{lem:ReductionTTree1}, if 
  $N(\overline{\rho(\alpha)}1) = N(\overline{\alpha}1)$, then
  $\rho(\alpha)_{0} = \alpha_{0}$. This means that
  $\seq{\alpha_{0},\alpha_{1},\alpha_{2}}$ does not match any
  pattern in \eqref{eq:Pattern}.
  Thus, $(\sigma^{0}_{\alpha})_{1} = \alpha_{1}$. 
  If $N(\overline{\rho(\alpha)}1) \neq N(\overline{\alpha}1)$, then 
 $\seq{\alpha_{0},\alpha_{1},\alpha_{2}}$ matches some 
  pattern in \eqref{eq:Pattern}, which implies 
  $(\sigma^{0}_{\alpha})_{1} = 2 - \alpha_{1}$. 
 The base case for \ref{lem:ReductionTTree2} can be proved similarly.
  \smallskip

  \noindent\emph{Inductive case $(n = k + 1)$}:
  Assume \ref{lem:ReductionTTree1}
  and \ref{lem:ReductionTTree2} for $k$.
  First, we show \ref{lem:ReductionTTree1}. Suppose
  \begin{equation}
    \label{lem:ReductionTTreeInd}
    N(\overline{\rho(\alpha)}(k+2)) = N(\overline{\alpha}(k+2)).
  \end{equation}

  \noindent\emph{Case $N(\overline{\rho(\alpha)}(k+1)) =
  N(\overline{\alpha}(k+1))$}: By the induction hypothesis of 
  \ref{lem:ReductionTTree1}, we have
  $(\sigma^{k}_{\alpha})_{1} = \alpha_{k+1}$.
  On the other hand, by \eqref{lem:ReductionTTreeInd}
  and the assumption $N(\overline{\rho(\alpha)}(k+1)) =
  N(\overline{\alpha}(k+1))$, we have
    $\rho(\alpha)_{k+1} = \alpha_{k+1}$.
  Thus
  \[
    \alpha_{k+1}
    = \rho(\alpha)_{k+1} 
    = (\sigma^{k+1}_{\alpha})_{0}
    = (\rho((\sigma^{k}_{\alpha})_{1}, \alpha_{k+2},
    \alpha_{k+3}))_{0}
    = (\rho(\alpha_{k+1}, \alpha_{k+2}, \alpha_{k+3}))_{0},
  \]
  which means that $\seq{\alpha_{k+1}, \alpha_{k+2}, \alpha_{k+3}}$ does not match
  any pattern in \eqref{eq:Pattern}. Thus, we must have
  $(\sigma^{k+1}_{\alpha})_{1} =  \alpha_{k+2}$.
  \smallskip

  \noindent\emph{Case $N(\overline{\rho(\alpha)}(k+1)) \neq
  N(\overline{\alpha}(k+1))$}: By the induction hypothesis of 
\ref{lem:ReductionTTree2}, we have
\begin{equation}
  \label{lem:ReductionTTreeInd2}
N(\overline{\alpha}(k+2)) = N(\overline{\rho(\alpha)}(k+1) *
      \seq{2-\alpha_{k+1}}).
\end{equation}
By the induction hypothesis of 
\ref{lem:ReductionTTree1}, we also have
$(\sigma^{k}_{\alpha})_{1} = 2 - \alpha_{k+1}$.
By \eqref{lem:ReductionTTreeInd} and \eqref{lem:ReductionTTreeInd2}, we have
  $
  \rho(\alpha)_{k+1}
  = 2 - \alpha_{k + 1}.
  $
Thus
  \begin{align*}
    2 - \alpha_{k+1}
    = \rho(\alpha)_{k+1} 
    = (\rho((\sigma^{k}_{\alpha})_{1}, \alpha_{k+2},
    \alpha_{k+3}))_{0}
    = (\rho(2- \alpha_{k+1}, \alpha_{k+2}, \alpha_{k+3}))_{0},
  \end{align*}
which means that
$\seq{2 - \alpha_{k + 1}, \alpha_{k+2}, \alpha_{k + 3}}$ does not match any
pattern in \eqref{eq:Pattern}. Thus
  $
  (\sigma^{k+1}_{\alpha})_{1} = \alpha_{k+2}
  $.
  \smallskip

  Next, contrary to \eqref{lem:ReductionTTreeInd}, suppose that
  \begin{equation}
    \label{lem:ReductionTTreeInd3}
    N(\overline{\rho(\alpha)}(k+2)) \neq N(\overline{\alpha}(k+2)).
  \end{equation}
  \smallskip
  \noindent\emph{Case $N(\overline{\rho(\alpha)}(k+1)) =
  N(\overline{\alpha}(k+1))$}: By the induction hypothesis of 
  \ref{lem:ReductionTTree1}, we have
  $(\sigma^{k}_{\alpha})_{1} = \alpha_{k+1}$. By
  \eqref{lem:ReductionTTreeInd3} and the assumption $N(\overline{\rho(\alpha)}(k+1)) =
  N(\overline{\alpha}(k+1))$, we have 
  $\rho(\alpha)_{k+1} \neq \alpha_{k+1}$. Thus
  \begin{align*}
    \alpha_{k+1}
    \neq \rho(\alpha)_{k+1}
    = (\rho((\sigma^{k}_{\alpha})_{1},
    \alpha_{k+2},\alpha_{k+3}))_{0} 
    = (\rho(\alpha_{k+1}, \alpha_{k+2},\alpha_{k+3}))_{0},
  \end{align*}
  which means that $\seq{\alpha_{k+1}, \alpha_{k+2}, \alpha_{k+3}}$
  matches some pattern in $\eqref{eq:Pattern}$. By the inspection of
  the patterns in $\eqref{eq:Pattern}$, we must have
  $(\sigma^{k+1}_{\alpha})_{1} = 2 - \alpha_{k+2}$.
  \smallskip

  \noindent\emph{Case $N(\overline{\rho(\alpha)}(k+1)) \neq
  N(\overline{\alpha}(k+1))$}: By the induction hypothesis of 
  \ref{lem:ReductionTTree2}, we have
  $N(\overline{\alpha}(k+2)) = N(\overline{\rho(\alpha)}(k+1) *
  \seq{2 - \alpha_{k+1}})$, and so by 
  \eqref{lem:ReductionTTreeInd3}, we must have $\rho(\alpha)_{k+1} \neq 2 -
  \alpha_{k+1}$. 
  On the other hand, by the induction hypothesis of
\ref{lem:ReductionTTree1}, we have $(\sigma^{k}_{\alpha})_{1}= 2 -
\alpha_{k+1}$. Thus
  \[
   2 - \alpha_{k+1}
   \neq \rho(\alpha)_{k+1}
    = (\rho((\sigma^{k}_{\alpha})_{1}, \alpha_{k+2},\alpha_{k+3}))_{0}
    = (\rho(2 - \alpha_{k+1}, \alpha_{k+2},\alpha_{k+3}))_{0},
  \]
 which means that $\seq{2 - \alpha_{k+1},
  \alpha_{k+2}, \alpha_{k+ 3}}$ matches some pattern in
  \eqref{eq:Pattern}. 
  By the inspection of
  the patterns in $\eqref{eq:Pattern}$, we must have
  $(\sigma^{k+1}_{\alpha})_{1} = 2 -
  \alpha_{k+2}$.
  \smallskip

  Next, we show \ref{lem:ReductionTTree2}. Suppose
  \begin{equation}
    \label{lem:ReductionTTreeInd4}
    N(\overline{\rho(\alpha)}(k+2)) \neq N(\overline{\alpha}(k+2)).
  \end{equation}

  \noindent\emph{Case $N(\overline{\rho(\alpha)}(k+1)) =
  N(\overline{\alpha}(k+1))$}: By \eqref{lem:ReductionTTreeInd4} and
  the assumption $N(\overline{\rho(\alpha)}(k+1)) =
  N(\overline{\alpha}(k+1))$, we have $\rho(\alpha)_{k+1} \neq
  \alpha_{k+1}$. On the other hand, by the induction hypothesis of
  $\ref{lem:ReductionTTree1}$, we have
  $(\sigma^{k}_{\alpha})_{1} = \alpha_{k+1}$. Thus
  \begin{align*}
    \alpha_{k+1}
    \neq \rho(\alpha)_{k+1}
    = (\rho( (\sigma^{k}_{\alpha})_{1}, \alpha_{k+2},
    \alpha_{k+3}))_{0} 
    = (\rho(\alpha_{k+1}, \alpha_{k+2}, \alpha_{k+3}))_{0},
  \end{align*}
  which means that $\seq{\alpha_{k+1}, \alpha_{k+2}, \alpha_{k+3}}$ matches some
  pattern in \eqref{eq:Pattern}. By the inspection of the patterns in 
  \eqref{eq:Pattern}, we have 
  \[
    N(\seq{\alpha_{k+1}, \alpha_{k+2}}) =N(\seq{\rho(\alpha)_{k+1}, 2
    - \alpha_{k+2}}).
  \]
  and hence, by  Lemma \ref{lem:Numbering}, we obtain
  \[
    N(\overline{\alpha}(k+3)) 
    = N(\overline{\rho(\alpha)}(k+1) * \seq{\alpha_{k+1}, \alpha_{k+2}}) 
    = N(\overline{\rho(\alpha)}(k+2) * \seq{ 2 - \alpha_{k+2}}).
  \]

  \noindent\emph{Case $N(\overline{\rho(\alpha)}(k+1)) \neq
  N(\overline{\alpha}(k+1))$}: By the induction hypothesis of
  $\ref{lem:ReductionTTree2}$, we have
    $
    N(\overline{\alpha}(k+2))
    = N(\overline{\rho(\alpha)}(k+1) * \seq{2 - \alpha_{k+1}}),
    $
  and so  $\rho(\alpha)_{k+1} \neq 2- \alpha_{k+1}$
  by \eqref{lem:ReductionTTreeInd4}.
  On the other hand, by the induction hypothesis of
  $\ref{lem:ReductionTTree1}$,
  we have $(\sigma^{k}_{\alpha})_{1} = 2 - \alpha_{k+1}$. Thus
  \[
    2- \alpha_{k+1}
    \neq \rho(\alpha)_{k+1}
    = (\rho( (\sigma^{k}_{\alpha})_{1}, \alpha_{k+2}, \alpha_{k+3}))_{0}
    = (\rho(2- \alpha_{k+1}, \alpha_{k+2}, \alpha_{k+3}))_{0},
  \]
  which means that
  $\seq{2 - \alpha_{k+1}, \alpha_{k+2}, \alpha_{k+3}}$ matches some pattern
  in \eqref{eq:Pattern}. Then, by the similar argument as in the
  previous case, we have
  \[
    N(\overline{\alpha}(k+3))
    = N(\overline{\rho(\alpha)}(k+1) * \seq{2 - \alpha_{k+1},
    \alpha_{k+2}})
    = N(\overline{\rho(\alpha)}(k+2) * \seq{2 - \alpha_{k+2}}).
    \qedhere
  \]
\end{proof}
\begin{corollary}
  \label{cor:ReductionTree}
  For any $\alpha \in \TTree$ and $n \in \Nat$, we have
  \[
    |N(\overline{\alpha}(n+1)) - N(\overline{\rho(\alpha)}(n+1))| \leq 1.
  \]
\end{corollary}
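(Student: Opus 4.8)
The plan is to prove the bound $\lvert N(\overline{\alpha}(n+1)) - N(\overline{\rho(\alpha)}(n+1))\rvert \leq 1$ by induction on $n$, using Lemma~\ref{lem:ReductionTTree} as the engine. The only auxiliary fact needed is the elementary observation, read off directly from the four patterns in \eqref{eq:Pattern}, that $\rho$ shifts the first coordinate of a triple by at most one: $\lvert (\rho(u))_{0} - u_{0} \rvert \leq 1$ for every $u \in \Ter^{3}$ (the difference is $1$ on the four listed patterns and $0$ elsewhere).

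For the base case $n = 0$, since $N(\nil) = 1$ we have $N(\seq{i}) = i + 1$ for $i \in \Ter$, so the quantity to bound equals $\lvert \alpha_{0} - \rho(\alpha)_{0}\rvert$; recalling that $\rho(\alpha)_{0} = (\sigma^{0}_{\alpha})_{0} = (\rho(\alpha_{0},\alpha_{1},\alpha_{2}))_{0}$, this is $\leq 1$ by the observation.

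For the inductive step, set $a = \overline{\alpha}(n+1)$ and $b = \overline{\rho(\alpha)}(n+1)$, so that $\lvert N(a) - N(b)\rvert \leq 1$ by the induction hypothesis, and the goal is $\lvert N(a * \seq{\alpha_{n+1}}) - N(b * \seq{\rho(\alpha)_{n+1}})\rvert \leq 1$. Expanding both sides via $N(s * \seq{i}) = 2N(s) + (i-1)$, I split on whether $N(a) = N(b)$. If $N(a) = N(b)$, the difference collapses to $\lvert \alpha_{n+1} - \rho(\alpha)_{n+1}\rvert$; the first clause of Lemma~\ref{lem:ReductionTTree} gives $(\sigma^{n}_{\alpha})_{1} = \alpha_{n+1}$ in this case, whence $\rho(\alpha)_{n+1} = (\sigma^{n+1}_{\alpha})_{0} = (\rho(\alpha_{n+1},\alpha_{n+2},\alpha_{n+3}))_{0}$ is within $1$ of $\alpha_{n+1}$ by the observation. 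If $N(a) \neq N(b)$, then the induction hypothesis forces $\lvert N(a) - N(b)\rvert = 1$, and the second clause of Lemma~\ref{lem:ReductionTTree} rewrites $N(a * \seq{\alpha_{n+1}})$ as $N(b * \seq{2 - \alpha_{n+1}})$, so the difference collapses to $\lvert (2 - \alpha_{n+1}) - \rho(\alpha)_{n+1}\rvert$; here the first clause of Lemma~\ref{lem:ReductionTTree} gives $(\sigma^{n}_{\alpha})_{1} = 2 - \alpha_{n+1}$, whence $\rho(\alpha)_{n+1} = (\rho(2 - \alpha_{n+1},\alpha_{n+2},\alpha_{n+3}))_{0}$ is within $1$ of $2 - \alpha_{n+1}$. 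Either way the bound propagates.

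There is essentially no serious obstacle: the combinatorial work has already been concentrated in Lemma~\ref{lem:ReductionTTree}, and the corollary merely repackages it. The one point to handle with care is the case split in the inductive step — one must invoke the second clause of Lemma~\ref{lem:ReductionTTree} over the base $b$ (not $a$), replacing $\alpha_{n+1}$ by $2 - \alpha_{n+1}$, and check that the hypotheses ``$N(a) = N(b)$'' and ``$N(a) \neq N(b)$'' are precisely the conditions selecting the two alternatives in the first clause of that same lemma.
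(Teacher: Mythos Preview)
Your argument is correct, but the induction is doing no real work: in your inductive step you never use the hypothesis $\lvert N(a)-N(b)\rvert\le 1$ except to remark, in the second case, that the difference must then equal $1$---and even that remark plays no role afterwards, since you immediately rewrite $N(a*\seq{\alpha_{n+1}})$ via Lemma~\ref{lem:ReductionTTree}\,\ref{lem:ReductionTTree2}. In other words, your inductive step is already a self-contained direct proof of the bound at level $n+2$.

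The paper exploits this and gives a strictly shorter direct argument, avoiding both the induction and your auxiliary ``pattern observation'' about $\rho$. It simply case-splits on whether $N(\overline{\alpha}(n+1))=N(\overline{\rho(\alpha)}(n+1))$. If equal, the bound is $0$. If not, Lemma~\ref{lem:ReductionTTree}\,\ref{lem:ReductionTTree2} gives $N(\overline{\alpha}(n+2))=N(\overline{\rho(\alpha)}(n+1)*\seq{2-\alpha_{n+1}})$; unfolding $N$ on both sides yields $2\bigl(N(\overline{\alpha}(n+1))-N(\overline{\rho(\alpha)}(n+1))\bigr)=2-2\alpha_{n+1}$, so the difference is $1-\alpha_{n+1}\in\{-1,0,1\}$ and is nonzero by assumption. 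Thus only part~\ref{lem:ReductionTTree2} of the lemma is needed. Your route also works and makes the mechanism more explicit (tracking $(\sigma^{n}_{\alpha})_{1}$ via part~\ref{lem:ReductionTTree1}), but the paper's version shows that the corollary is really a one-line unpacking of part~\ref{lem:ReductionTTree2}.
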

\begin{proof}
  If $N(\overline{\alpha}(n+1)) = N(\overline{\rho(\alpha)}(n+1))$,
  the conclusion is immediate. Suppose that
  $N(\overline{\alpha}(n+1)) \neq N(\overline{\rho(\alpha)}(n+1))$.
  By Lemma \ref{lem:ReductionTTree}, we have
  $N(\overline{\alpha}(n+2)) = N(\overline{\rho(\alpha)}(n+1) *
  \seq{2 - \alpha_{n+1}})$, which implies
  $|N(\overline{\alpha}(n+1)) - N(\overline{\rho(\alpha)}(n+1)) | = 1$
  (see Figure \ref{fig:Numbering}).
\end{proof}

\begin{corollary}
  \label{cor:rhoPreserveEqual}
  For any  $\alpha \in \TTree$, we have
    $
    \Phi(\alpha) \simeq \Phi(\rho(\alpha)).
    $
\end{corollary}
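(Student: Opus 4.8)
The plan is to derive the claimed equality directly from Corollary~\ref{cor:ReductionTree} by unwinding the definition of equality on regular sequences. Recall that $\Phi(\alpha) = x_{\alpha} = \seq{2^{-(n+1)}N(\overline{\alpha}n)}_{n \in \Nat}$, and that two regular sequences $\seq{r_{n}}_{n \in \Nat}$ and $\seq{q_{n}}_{n \in \Nat}$ are equal precisely when $|r_{n+1} - q_{n+1}| \leq 2^{-n}$ for all $n \in \Nat$ (Definition~\ref{def:RegularSequences}). So it suffices to compare the $(n+1)$-th terms of $\Phi(\alpha)$ and $\Phi(\rho(\alpha))$.

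First I would write out, for an arbitrary $n \in \Nat$,
\[
  |x_{\alpha}^{n+1} - x_{\rho(\alpha)}^{n+1}|
  = 2^{-(n+2)}\bigl|N(\overline{\alpha}(n+1)) - N(\overline{\rho(\alpha)}(n+1))\bigr|.
\]
Then I would invoke Corollary~\ref{cor:ReductionTree}, which gives $|N(\overline{\alpha}(n+1)) - N(\overline{\rho(\alpha)}(n+1))| \leq 1$, to conclude that this quantity is at most $2^{-(n+2)} \leq 2^{-n}$. Since $n$ was arbitrary, the defining condition for $\Phi(\alpha) \simeq \Phi(\rho(\alpha))$ is met.

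There is essentially no obstacle here: all the genuine work has already been carried out in Lemma~\ref{lem:ReductionTTree} and its Corollary~\ref{cor:ReductionTree}, which control how much the numbering $N$ can change under $\rho$ along initial segments. The only points to take care of are the index bookkeeping (matching the ``$n+1$'' in the equality of regular sequences with the ``$n+1$''-length prefixes appearing in Corollary~\ref{cor:ReductionTree}) and noting the slack $2^{-(n+2)} \leq 2^{-n}$, which is comfortable. In other words, the corollary is an immediate consequence of Corollary~\ref{cor:ReductionTree} together with the arithmetic of regular-sequence equality.
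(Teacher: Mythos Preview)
Your proposal is correct and takes essentially the same approach as the paper, which simply states that the result is immediate from Corollary~\ref{cor:ReductionTree}; you have merely made explicit the index bookkeeping that the paper leaves to the reader.
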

\begin{proof}
  Immediate from Corollary \ref{cor:ReductionTree}.
\end{proof}

The following proposition, together with Proposition \ref{prop:Surject}
and Corollary \ref{cor:rhoPreserveEqual},
 states that $\UInt$ is a \emph{uniform quotient}~\cite{VarietiesConstMath} of
 $\TTree$ (cf.\ Troelstra and van Dalen \cite[Chapter 6, Proposition
 3.2 (iii)]{ConstMathI}).
\begin{proposition}
  \label{prop:Quotient}
  For each $\alpha \in \TTree$ and $n \in \Nat$, 
  \[
    \forall x \in \UInt \left( |x - \Phi(\rho(\alpha))| <
    2^{-(n+5)} \imp \exists \gamma \in \overline{\rho(\alpha)} n
    \left(  x
    \simeq \Phi(\gamma)\right)\right).
  \]
\end{proposition}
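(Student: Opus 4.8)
The plan is to reduce the claim to the single assertion that $x$ lies in the rational interval $\RInt_{s}$ attached to the node $s := \overline{\rho(\alpha)}n$, and then to transport Proposition~\ref{prop:Surject} along the affine bijection from $\UInt$ onto $\RInt_{s}$. Write $\beta := \rho(\alpha)$, $s := \overline{\beta}n$, $q := 2^{-(n+1)}$ and $c_{s} := 2^{-(n+1)}N(s)$, so that $\RInt_{s} = [\,c_{s}-q,\ c_{s}+q\,]$; since the intervals $\RInt_{t}$ are nested inside $\RInt_{\nil}$, we have $\RInt_{s}\subseteq\UInt$. Two elementary facts will be used: (a)~since $x^{n}_{\beta} = c_{s}$, Lemma~\ref{lem:RepresentationApprox} gives $\Phi(\beta) \in \RInt_{s}$; and (b)~from the identity $N(s*u) = 2^{\lh{u}}N(s) + N(u) - 2^{\lh{u}}$ (a one-line induction on $\lh{u}$) one computes $x^{n+k}_{\beta} = c_{s} + q\sum_{j<k}(\beta_{n+j}-1)2^{-(j+1)}$, so by Lemma~\ref{lem:RepresentationApprox} again $\Phi(\beta) \simeq c_{s} + qS$, where $S := \sum_{j\ge 0}(\beta_{n+j}-1)2^{-(j+1)} \in [-1,1]$ is the signed-digit value of the tail of $\beta$ read inside $\RInt_{s}$.

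The heart of the matter is the \emph{margin estimate}: every $x \in \UInt$ with $|x - \Phi(\beta)| < 2^{-(n+5)}$ satisfies $\fst{\RInt_{s}} \le x \le \snd{\RInt_{s}}$, i.e.\ $x \in \RInt_{s}$. For the upper inequality I would decide whether $\seq{\beta_{n},\beta_{n+1},\beta_{n+2}} = \seq{2,2,2}$. If so, Corollary~\ref{cor:RhoImpossiblePattern} gives $\overline{\alpha}(n+1) = 2^{n+1}$, and then necessarily $s = 2^{n}$: since $N(2^{n+1}) = 2^{n+2}-1$ is the largest value of $N$ on nodes of length $n+1$, Corollary~\ref{cor:ReductionTree} forces $N(\overline{\rho(\alpha)}(n+1)) \in \{2^{n+2}-2,\ 2^{n+2}-1\}$, and the only nodes of that length realising these values are $2^{n}*\seq{1}$ and $2^{n+1}$, both extending $2^{n}$; hence $\snd{\RInt_{s}} = 1 \ge x$. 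If on the other hand $\seq{\beta_{n},\beta_{n+1},\beta_{n+2}} \ne \seq{2,2,2}$, then $\beta_{n+j_{0}} \le 1$ for some $j_{0}\le 2$, so $S \le 1 - 2^{-(j_{0}+1)} \le 7/8$; combining with (b) gives $\snd{\RInt_{s}} - \Phi(\beta) \simeq q(1-S) \ge 2^{-(n+4)}$, whence $x < \Phi(\beta) + 2^{-(n+5)} < \snd{\RInt_{s}}$. The lower inequality is entirely symmetric, deciding whether $\seq{\beta_{n},\beta_{n+1},\beta_{n+2}} = \seq{0,0,0}$ and using $N(0^{n+1}) = 1$ (the least value of $N$) together with $\fst{\RInt_{0^{n}}} = 0$.

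Given $x \in \RInt_{s}$, I would finish by transporting along the affine bijection $\phi_{s} \colon \UInt \to \RInt_{s}$, $\phi_{s}(t) := \fst{\RInt_{s}} + 2^{-n}t$. The identity $N(s*u) = 2^{\lh{u}}N(s) + N(u) - 2^{\lh{u}}$ again yields $x^{n+k}_{s*\delta} = \phi_{s}(x^{k}_{\delta})$ for all $\delta \in \TTree$ and all $k$, hence $\Phi(s*\delta) \simeq \phi_{s}(\Phi(\delta))$. The real number $x' := 2^{n}(x - \fst{\RInt_{s}})$ then lies in $\UInt$; by Proposition~\ref{prop:Surject} $x' \simeq \Phi(\alpha_{x'})$, and since $\phi_{s}$ respects $\simeq$ and $x \simeq \phi_{s}(x')$, we obtain $x \simeq \phi_{s}(\Phi(\alpha_{x'})) \simeq \Phi(s*\alpha_{x'})$. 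Thus $\gamma := s*\alpha_{x'}$ satisfies $\gamma \in \overline{\rho(\alpha)}n$ and $x \simeq \Phi(\gamma)$, as required.

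I expect the margin estimate to be the main obstacle, and within it the step passing from $\overline{\alpha}(n+1) = i^{n+1}$ back to $\overline{\rho(\alpha)}n = i^{n}$: Corollary~\ref{cor:RhoImpossiblePattern} constrains the output $\rho(\alpha)$ in terms of the input $\alpha$, so one still needs Corollary~\ref{cor:ReductionTree} to conclude that the endpoint of $\RInt_{s}$ being approached is actually $0$ or $1$ in the degenerate cases. A secondary point worth checking is that every case distinction above ($\seq{\beta_{n},\beta_{n+1},\beta_{n+2}} = \seq{i,i,i}$ or not) and every interval inclusion used is decidable, so the argument remains constructively sound.
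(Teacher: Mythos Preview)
Your argument is correct and takes a genuinely different route from the paper's. The paper represents $x$ as $\Phi(\beta)$ for some $\beta \in \TTree$ at the outset and then works purely combinatorially at level $n+4$: from $|x-\Phi(\rho(\alpha))|<2^{-(n+5)}$ it extracts $|N(\overline{\beta}(n+4))-N(\overline{\rho(\alpha)}(n+4))|\le 2$, and then uses Corollary~\ref{cor:RhoImpossiblePattern} to conclude that $N(\overline{\rho(\alpha)}(n+4))$ is at distance at least $2^{4}-3$ from $2^{4}N(\overline{\rho(\alpha)}n)\pm 2^{4}$ unless we are at an endpoint of $\UInt$; either way $|2^{4}N(\overline{\rho(\alpha)}n)-N(\overline{\beta}(n+4))|\le 2^{4}-1$, so some $s'\in\Ter^{4}$ has $N(\overline{\rho(\alpha)}n*s')=N(\overline{\beta}(n+4))$, and one simply splices the tail of $\beta$ after $s'$. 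Your approach is more geometric: you first prove the interval inclusion $x\in\RInt_{s}$ (your ``margin estimate'') and then invoke the affine self-similarity $\Phi(s*\delta)\simeq\phi_{s}(\Phi(\delta))$ to transport Proposition~\ref{prop:Surject}. Both proofs rest on the same key input, Corollary~\ref{cor:RhoImpossiblePattern}, and both need Corollary~\ref{cor:ReductionTree} to handle the boundary cases; the paper's version is shorter and yields a $\gamma$ whose tail is explicitly the tail of a chosen representative of $x$, while yours makes the underlying self-similar structure of the spread representation visible and cleanly isolates the margin estimate as the only place where $\rho$ matters.
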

\begin{proof}
  Fix $\alpha \in \TTree$ and $n \in \Nat$.
  By Proposition \ref{prop:Surject}, it suffices to show that
  \[
     | \Phi(\beta) - \Phi(\rho(\alpha))| < 2^{-(n+5)} 
     \imp
     \exists \gamma \in \overline{\rho(\alpha)} n
     \left(  \Phi(\beta) \simeq \Phi(\gamma)\right)
  \]
  for all $\beta \in \TTree$.
  Fix $\beta \in \TTree$, and suppose that
  $|\Phi(\beta) - \Phi(\rho(\alpha))| < 2^{-(n+5)}$.
  Then, $|x_{\beta}^{m} - x_{\rho(\alpha)}^{m}| < 2^{-(n+5)}$ for
  sufficiently large $m \geq n+4$. Thus
  \begin{align*}
    &|2^{-(n+5)} N(\overline{\beta}(n+4))
    - 2^{-(n+5)} N(\overline{\rho(\alpha)}(n+4))| \\
  &\leq |2^{-(n+5)} N(\overline{\beta}(n+4)) -  x_{\beta}^{m}|
  + | x_{\beta}^{m} - x_{\rho(\alpha)}^{m}| 
  + | x_{\rho(\alpha)}^{m} - 2^{-(n+5)} N(\overline{\rho(\alpha)}(n+4))| \\
  &< 2^{-(n+5)} + 2^{-(n+5)} + 2^{-(n+5)}\\
  &= 3 \cdot 2^{-(n+5)}.
  \end{align*}
  Hence $| N(\overline{\beta}(n+4)) - N(\overline{\rho(\alpha)}(n+4)) |
  \leq 2$. Since $\seq{\rho(\alpha)_{n}, \rho(\alpha)_{n+1},
  \rho(\alpha)_{n+2}} \notin \left\{ \seq{0,0,0}, \seq{2,2,2} \right\}$ 
  unless $\overline{\rho(\alpha)}(n+3)$ is
  the left most or the right most node of $\Ter^{*}$ (see Corollary
  \ref{cor:RhoImpossiblePattern}),
  we must have
  \[
    |2^{4} N(\overline{\rho(\alpha)}n) - N(\overline{\beta}(n+4))|
    \leq 2^{4} - 1
  \]
  (see Figure \ref{fig:Numbering}). Thus, there exists $s \in
  \Ter^{4}$ such that 
    $
    N(\overline{\rho(\alpha)}n * s) = N(\overline{\beta}(n+4)).
    $
  Hence, the sequence $\gamma \defeql \overline{\rho(\alpha)}n * s *
  \lambda k. \beta(n + 4 + k)$ satisfies $\Phi(\beta) \simeq \Phi(\gamma)$.
\end{proof}

\begin{definition}
  A function $f \colon \UInt\to \Real$ is \emph{uniformly continuous} if
  there exists $\omega \colon \Nat \to \Nat$, called a \emph{modulus
  of uniform continuity}, such that
  \[
    \forall k \in \Nat \forall x, y \in
    \UInt
    \left( |x-y| \leq 2^{-\omega(k)} \imp |f(x) - f(y) | \leq 2^{-k}\right).
  \]
\end{definition}

The following theorem states that the uniform structure of $\UInt$ is
completely determined by $\TTree$ through $\Phi$.
\begin{theorem}
  \label{prop:UniversalQuotient}
  A function $f \colon \UInt \to \Real$ is uniformly continuous if and
  only if the composition $f \circ \Phi \colon \TTree \to \Real$ is
  uniformly continuous.
\end{theorem}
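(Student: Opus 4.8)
The plan is to prove the two directions separately, with the forward direction being routine and the backward direction being the substantive one.

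For the "only if" direction, suppose $f \colon \UInt \to \Real$ is uniformly continuous with modulus $\omega$. By Corollary~\ref{cor:UnifContPhi}, $\Phi$ is uniformly continuous with modulus $k \mapsto k$, so for $\alpha, \beta \in \TTree$ with $\overline{\alpha}\omega(k) = \overline{\beta}\omega(k)$ we get $|\Phi(\alpha) - \Phi(\beta)| \leq 2^{-\omega(k)}$ provided we feed $\omega(k)$ into the modulus of $\Phi$; then $|f(\Phi(\alpha)) - f(\Phi(\beta))| \leq 2^{-k}$. Hence $f \circ \Phi$ is uniformly continuous. (One has to be slightly careful matching up the two notions of modulus --- the modulus of uniform continuity for functions on $\TTree$ counts initial segments, whereas for functions on $\UInt$ it measures distance --- but Corollary~\ref{cor:UnifContPhi} is exactly the bridge, so this amounts to composing the two moduli appropriately.)

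For the "if" direction, suppose $f \circ \Phi \colon \TTree \to \Real$ is uniformly continuous with modulus $\omega'$, so that $\overline{\alpha}\omega'(k) = \overline{\beta}\omega'(k)$ implies $|f(\Phi(\alpha)) - f(\Phi(\beta))| \leq 2^{-k}$. I want to produce a modulus of uniform continuity for $f$ itself. Fix $k$. Given $x, y \in \UInt$, I first use Proposition~\ref{prop:Surject} to pick $\alpha_{x}, \alpha_{y} \in \TTree$ with $x \simeq \Phi(\alpha_{x})$ and $y \simeq \Phi(\alpha_{y})$. The key point is that when $x$ and $y$ are close, $x$ lies inside one of the small intervals $\RInt_{\overline{\rho(\alpha_{y})}n}$-type neighbourhoods of $\Phi(\rho(\alpha_{y}))$, and Proposition~\ref{prop:Quotient} then furnishes a $\gamma \in \overline{\rho(\alpha_{y})}n$ with $x \simeq \Phi(\gamma)$; meanwhile $y \simeq \Phi(\rho(\alpha_{y}))$ by Corollary~\ref{cor:rhoPreserveEqual}, and $\rho(\alpha_{y}) \in \overline{\rho(\alpha_{y})}n$ trivially. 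So both $x$ and $y$ are represented by paths sharing the initial segment $\overline{\rho(\alpha_{y})}n$ of length $n$. Concretely: set $n = \omega'(k)$ and choose the distance threshold $\delta = 2^{-(n+5)}$ as dictated by Proposition~\ref{prop:Quotient}; if $|x - y| < \delta$ then $|x - \Phi(\rho(\alpha_{y}))| < 2^{-(n+5)}$ (using $y \simeq \Phi(\rho(\alpha_{y}))$), so Proposition~\ref{prop:Quotient} gives $\gamma \in \overline{\rho(\alpha_{y})}n$ with $x \simeq \Phi(\gamma)$. Since $\overline{\gamma}n = \overline{\rho(\alpha_{y})}n$, the modulus $\omega'$ yields $|f(\Phi(\gamma)) - f(\Phi(\rho(\alpha_{y})))| \leq 2^{-k}$, and because $f$ respects $\simeq$ and $x \simeq \Phi(\gamma)$, $y \simeq \Phi(\rho(\alpha_{y}))$, this gives $|f(x) - f(y)| \leq 2^{-k}$. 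Hence $k \mapsto n+5 = \omega'(k) + 5$ is a modulus of uniform continuity for $f$.

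The main obstacle is the bookkeeping in the backward direction: one must route $x$ and $y$ through a \emph{common} initial segment in $\TTree$, and the only tool for that is the uniform quotient property (Propositions~\ref{prop:Surject}, \ref{prop:Quotient} and Corollary~\ref{cor:rhoPreserveEqual}), which forces the slightly awkward shift by $5$ in the modulus and the use of $\rho$ to avoid the leftmost/rightmost branch pathologies. The delicate point is simply to apply Proposition~\ref{prop:Quotient} to the right centre (namely $\Phi(\rho(\alpha_{y}))$, not $\Phi(\alpha_{y})$, since the proposition is stated for $\rho$ of a path) and to check that $\rho(\alpha_{y})$ itself lies in $\overline{\rho(\alpha_{y})}n$ so that $y$ and $x$ genuinely agree on the first $n$ coordinates of their chosen representatives. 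Everything else is a routine triangle-inequality estimate together with the fact that $f$ is a function on $\Real$ and hence respects $\simeq$.
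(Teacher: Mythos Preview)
Your proposal is correct and follows essentially the same route as the paper: apply $\rho$ to one representative, then use Proposition~\ref{prop:Quotient} to pull the other point onto a path sharing the required initial segment, and read off the estimate from the modulus of $f\circ\Phi$. The only slip is a harmless off-by-one: Proposition~\ref{prop:Quotient} is stated with a \emph{strict} inequality $|x-\Phi(\rho(\alpha))|<2^{-(n+5)}$, whereas the definition of a modulus of uniform continuity uses $\leq$, so to make $|x-y|\leq 2^{-\omega(k)}$ imply the strict bound you should take $\omega(k)=\omega'(k)+6$ rather than $\omega'(k)+5$; the paper does exactly this.
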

\begin{proof}
  It suffices to show ``if'' part. Suppose that $f \circ \Phi$ is
  uniformly continuous with modulus $\omega \colon \Nat \to \Nat$.
  Fix $k \in \Nat$, and let $x,y$ be regular sequences in $\UInt$
  such that $|x-y| \leq 2^{-(\omega(k)+6)}$. 
  Let $\alpha_{x} \in \TTree$ be the path 
  determined by $x$ by \eqref{def:FromRegularToPath}. Then $x
  \simeq \Phi(\alpha_{x}) \simeq
  \Phi(\rho(\alpha_{x}))$ by Proposition~\ref{prop:Surject}
  and Corollary~\ref{cor:rhoPreserveEqual}.
  Thus, there exists $\beta \in
  \overline{\rho(\alpha_{x})}\omega(k)$ such that $y \simeq \Phi(\beta)$ 
  by Proposition~\ref{prop:Quotient}.
  Then
  \[
    \lh{ f(x) - f(y) }
    \simeq \lh{ f(\Phi(\rho(\alpha_{x}))) - f(\Phi(\beta)) } \leq 2^{-k}.
  \]
  Therefore $f$ is uniformly continuous with modulus
  $k \mapsto \omega(k)+6$.
\end{proof}

\section{Uniform continuity theorem with continuous modulus}
\label{sec:UCTc}
We introduce a notion of modulus for functions from $\UInt$ to
$\Real$ and show that the uniform continuity theorem for
the functions from $\UInt$ to $\Real$ with continuous modulus
($\UCTc$) is equivalent to the decidable fan theorem.

\subsection{Continuous moduli of functions from $\UInt$ to $\Real$}
\label{sec:ContinuousModulus}
We fix a bijective coding of rational numbers by $\Nat$. Let
$\Reg$ denote the set of regular sequences of rational numbers in
$\UInt$,
which is identified with a subset of $\Baire$ through the fixed coding.
Note that the equality on $\Reg$ is the pointwise equality and not 
that of real numbers defined by \eqref{def:RegularEqual}.
In the following, we assume that real numbers are represented by
regular sequences.
\begin{definition}
  \label{def:ModonReg}
  A function $g \colon \Nat \to \Reg \to \Nat$ is a 
  \emph{modulus} of a function $f \colon \UInt \to \Real$ if
  \[
    \forall k \in \Nat \forall x, y \in \UInt
    \left( |x -  y| \leq 2^{-g_{k}(x)} \imp |f(x) - f(y)| \leq 2^{-k} \right).
  \]
  A modulus $g$ of $f \colon \UInt \to \Real$ is \emph{continuous}
  if for each $k \in \Nat$, the function $g_{k} \colon \Reg \to \Nat$
  is pointwise continuous in the sense that
  \[
    \forall x \in \Reg \exists n \in \Nat
    \forall y \in \Reg \left( \overline{x}n = \overline{y}n \imp
    g_{k}(x) = g_{k}(y)\right).
  \]
\end{definition}

We also introduce another notion of modulus for functions from $\UInt$ to
$\Real$, which is defined in terms of the spread representation. 
\begin{definition}
  \label{def:ModonTTree}
  A function $g \colon \Nat \to \TTree \to \Nat$ is a 
  \emph{ternary modulus} of a function $f \colon \UInt \to \Real$ if
  \[
    \forall k \in \Nat \forall \alpha \in \TTree \forall x \in \UInt
    \left( |\Phi(\alpha) - x| \leq 2^{-g_{k}(\alpha)} \imp
    |f(\Phi(\alpha)) -
    f(x)| \leq 2^{-k} \right).
  \]
  A ternary modulus $g$ of $f \colon \UInt \to \Real$ is \emph{continuous}
  if $g_{k} \colon \TTree \to \Nat$ is pointwise continuous for each
  $k \in \Nat$; we say that $g$ is \emph{uniformly continuous}
  if $g_{k} \colon \TTree \to \Nat$ is uniformly continuous for each
  $k \in \Nat$.
  Here, the notion of continuity on $\TTree$ is analogous
  to the one given in Definition \ref{def:ContonCantor}.
\end{definition}

\begin{proposition}
  \label{prop:EquivModTernMod}
  A function $f \colon \UInt \to \Real$ has a continuous modulus if
  and only if $f$ has a continuous ternary modulus.
\end{proposition}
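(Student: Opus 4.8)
The plan is to establish the two directions separately, using the maps $\Phi \colon \TTree \to \UInt$ and $x \mapsto \alpha_x$ from Section~\ref{sec:SpreadRepresentation} to transfer moduli between the representation by regular sequences and the ternary representation. The key structural facts are: $\Phi$ is uniformly continuous with modulus $k \mapsto k$ (Corollary~\ref{cor:UnifContPhi}); $\Phi$ is ``uniformly surjective'' in the sense that $x \simeq \Phi(\alpha_x)$ (Proposition~\ref{prop:Surject}) with the approximation bounds from \eqref{eq:PathfromTree} and Proposition~\ref{prop:Surject}; and, for the harder direction, the uniform quotient property (Proposition~\ref{prop:Quotient}) together with Corollary~\ref{cor:rhoPreserveEqual}.

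For the ``only if'' direction, suppose $f$ has a continuous modulus $g \colon \Nat \to \Reg \to \Nat$. I would define a ternary modulus $h \colon \Nat \to \TTree \to \Nat$ by precomposing with the map $\alpha \mapsto x_\alpha$: roughly, $h_k(\alpha)$ should be $g_k(x_\alpha)$ adjusted by a fixed additive constant to absorb the discrepancy between $|\Phi(\alpha) - x|$ and the metric relation controlled by $g_k$. Concretely, if $|\Phi(\alpha) - x| \leq 2^{-h_k(\alpha)}$ with $h_k(\alpha)$ chosen large enough (say $h_k(\alpha) = g_k(x_\alpha) + c$ for a small constant $c$), then using $x_\alpha \simeq \Phi(\alpha)$ one gets $|x_\alpha - x|\leq 2^{-g_k(x_\alpha)}$, hence $|f(x_\alpha) - f(x)| \leq 2^{-k}$, and since $f$ respects $\simeq$ and $\Phi(\alpha)\simeq x_\alpha$, also $|f(\Phi(\alpha)) - f(x)|\leq 2^{-k}$. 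Continuity of $h_k$ on $\TTree$ follows because $\alpha \mapsto x_\alpha$ is continuous (indeed $\overline{\alpha}n$ determines $\overline{x_\alpha}$ up to a controlled error, so it determines the finitely many initial rational terms of $x_\alpha$ that $g_k$ depends on) and $g_k$ is continuous on $\Reg$; composing a continuous map into $\Reg$ with a continuous map out of $\Reg$ gives a continuous map. One subtlety here is that $\alpha\mapsto x_\alpha$ lands in $\Reg$ as a concrete sequence of rationals, so I must check that the coding is such that a long enough initial segment of $\alpha$ pins down a long enough initial segment of the coded sequence $x_\alpha$; this is routine from the formula $x_\alpha^n = 2^{-(n+1)}N(\overline{\alpha}n)$.

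For the ``if'' direction, suppose $f$ has a continuous ternary modulus $h \colon \Nat \to \TTree \to \Nat$. Given $x \in \Reg$, pass to $\alpha_x \in \TTree$ via \eqref{def:FromRegularToPath}; then $x \simeq \Phi(\alpha_x)$. I would define $g_k(x)$ in terms of $h_k(\alpha_x)$, again with a fixed additive constant to bridge $|\Phi(\alpha_x) - y|$ and $|x-y|$. If $|x - y| \leq 2^{-g_k(x)}$ and $g_k(x)$ is large enough, then $|\Phi(\alpha_x) - y| \leq 2^{-h_k(\alpha_x)}$, so $|f(\Phi(\alpha_x)) - f(y)| \leq 2^{-k}$, hence $|f(x) - f(y)| \leq 2^{-k}$ since $f(x) \simeq f(\Phi(\alpha_x))$. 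The point requiring care is \emph{continuity} of $g_k \colon \Reg \to \Nat$: the map $x \mapsto \alpha_x$ does \emph{not} respect $\simeq$, but that is harmless here because $\Reg$ carries the pointwise equality, not $\simeq$; what I need is that $x\mapsto \alpha_x$ is pointwise continuous as a map of sequences, i.e., a long enough initial segment of $x$ (as a rational sequence) determines a long enough initial segment of $\alpha_x$. From \eqref{def:FromRegularToPath}, $\alpha_x(n)$ is determined by $\RInt^x_n$, which depends only on $r_{n+3}$; so $\overline{x}(n+4)$ determines $\overline{\alpha_x}(n+1)$, giving the required continuity, after which composing with the continuous $h_k$ yields continuity of $g_k$.

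\textbf{Main obstacle.} The genuinely delicate point is the continuity claim in the ``if'' direction: one must resist the temptation to treat $x \mapsto \alpha_x$ as a function on $\UInt$ (it is not, as the footnote after \eqref{def:FromRegularToPath} emphasises), and instead verify that it is a bona fide continuous operation on the underlying set $\Reg \subseteq \Baire$ — which it is, by the explicit local determination of $\alpha_x(n)$ from $r_{n+3}$ in \eqref{def:FromRegularToPath}. Getting the additive constants right (so that the metric inequalities chain together correctly through the $2^{-(n+5)}$ threshold in Proposition~\ref{prop:Quotient} and the $\leq 2^{-(n+1)}$ error in Proposition~\ref{prop:Surject}) is bookkeeping, but it is the part where an off-by-one in the exponents would break the argument, so I would carry those constants explicitly.
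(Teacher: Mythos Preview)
Your approach is correct and matches the paper's: transfer moduli via $\alpha \mapsto \Phi(\alpha)$ in one direction and $x \mapsto \alpha_{x}$ in the other, with continuity following from the finite-determination properties of these maps. However, you have overcomplicated both directions. No additive constants are needed: in the forward direction $\Phi(\alpha) = x_{\alpha}$ \emph{by definition} (not merely $\simeq$), so the paper simply sets $h_{k}(\alpha) \defeql g_{k}(\Phi(\alpha))$ and the modulus condition transfers verbatim; in the backward direction $x \simeq \Phi(\alpha_{x})$ means $|x-y| \simeq |\Phi(\alpha_{x})-y|$ as real numbers, and since $\leq$ respects $\simeq$, the inequality $|x-y|\leq 2^{-g_{k}(\alpha_{x})}$ yields $|\Phi(\alpha_{x})-y|\leq 2^{-g_{k}(\alpha_{x})}$ directly, so the paper takes $h_{k}(x) \defeql g_{k}(\alpha_{x})$ with no padding. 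In particular, neither Proposition~\ref{prop:Quotient} nor Corollary~\ref{cor:rhoPreserveEqual} is invoked here; the quotient property is needed later (e.g.\ in Lemma~\ref{lem:CodeToContMod}), but this proposition does not require it. Your identification of the ``main obstacle'' --- that $x\mapsto\alpha_{x}$ must be treated as a continuous operation on $\Reg$ rather than as a function on $\UInt$ --- is exactly right and is the one genuinely nontrivial observation in the paper's proof.
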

\begin{proof}
  Suppose that $f$ has a continuous modulus $g \colon \Nat \to
  \Reg \to \Nat$.
  Then, the function  $h \colon \Nat \to
  \TTree \to \Nat$ defined by
  \[
    h_{k}(\alpha) \defeql g_{k}(\Phi(\alpha))
  \]
  is a continuous ternary modulus of $f$.

  Conversely, suppose that $f$ has a continuous ternary modulus $g \colon \Nat \to
  \TTree \to \Nat$.  Define $h \colon \Nat \to \Reg \to \Nat$ by 
  \[
    h_{k}(x) \defeql g_{k}(\alpha_{x}),
  \]
  where $\alpha_{x} \in \TTree$ is the path determined by the regular
  sequence $x$ by
  \eqref{def:FromRegularToPath}.
  Since the value of $\alpha_{x}$ at index $n \in \Nat$ depends
  only on the first $n+3$ terms of $x$, the function
  $h_{k}$ is pointwise continuous for each $k \in \Nat$. Fix
  $k \in \Nat$ and $x, y \in \UInt$, and suppose that 
  $|x - y| \leq 2^{-h_{k}(x)}$. By Proposition \ref{prop:Surject}, we have
  \[
    |\Phi(\alpha_{x}) - y| \simeq |x - y| \leq 2^{-h_{k}(x)} =
    2^{-g_{k}(\alpha_{x})}.
  \]
  Since $g$ is a ternary  modulus of $f$, we have $|f(x)-f(y)| \simeq
  |f(\Phi(\alpha_{x}))-f(y)| \leq 2^{-k}$. 
  Therefore $h$ is a continuous modulus of $f$.
\end{proof}

\begin{definition}
The \emph{uniform continuity theorem with continuous modulus}
($\UCTc$) is the following statement:
\begin{description}
\item[($\UCTc$)] Every continuous function $f \colon \UInt \to \Real$ with
  a continuous modulus is uniformly continuous.
\end{description}
We also introduce the following variant of $\UCTc$ formulated with respect to the notion of ternary modulus:
\begin{description}
\item[($\UCTcT$)] Every continuous function $f \colon \UInt \to \Real$ with
  a continuous ternary modulus is uniformly continuous.
\end{description}
\end{definition}

\begin{theorem}
  \label{cor:EqiuvUCTcUTCcT}
  $\UCTc$ and $\UCTcT$ are equivalent.
\end{theorem}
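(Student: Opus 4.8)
The plan is to read off the equivalence directly from Proposition~\ref{prop:EquivModTernMod}, which is precisely the bridge between the two notions of modulus appearing in $\UCTc$ and $\UCTcT$. Since a function $f \colon \UInt \to \Real$ has a continuous modulus if and only if it has a continuous ternary modulus, any witness of one hypothesis can be converted into a witness of the other, while the underlying function $f$ and its pointwise continuity are left untouched. So the whole argument is essentially bookkeeping on top of Proposition~\ref{prop:EquivModTernMod}.

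First I would prove $\UCTc \Rightarrow \UCTcT$: given $f \colon \UInt \to \Real$ that is continuous and carries a continuous ternary modulus $g$, invoke Proposition~\ref{prop:EquivModTernMod} to obtain a continuous modulus of $f$ --- explicitly the function $x \mapsto g_{k}(\alpha_{x})$, where $\alpha_{x}$ is the path determined by the regular sequence $x$ via \eqref{def:FromRegularToPath} --- and then apply $\UCTc$ to conclude that $f$ is uniformly continuous. Then I would prove the converse $\UCTcT \Rightarrow \UCTc$ symmetrically, this time using the other half of Proposition~\ref{prop:EquivModTernMod}, which turns a continuous modulus $g$ of $f$ into the continuous ternary modulus $\alpha \mapsto g_{k}(\Phi(\alpha))$, and then applying $\UCTcT$.

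There is no substantive obstacle; the only detail worth spelling out is that the shared hypothesis ``$f$ is continuous'' is compatible with the translation in both directions. Possessing a continuous modulus (respectively, a continuous ternary modulus) already forces pointwise continuity of $f$, so the conjunction ``$f$ is continuous and has a continuous [ternary] modulus'' is genuinely transported along Proposition~\ref{prop:EquivModTernMod}, and the proof can be kept to a few lines once that proposition is in hand.
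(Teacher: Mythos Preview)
Your proposal is correct and matches the paper's own proof, which simply says ``Immediate from Proposition~\ref{prop:EquivModTernMod}.'' You have merely unpacked this one-line appeal into its two directions, which is fine and indeed a bit more explicit than the paper.
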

\begin{proof}
  Immediate from Proposition \ref{prop:EquivModTernMod}.
\end{proof}

\subsection{Fan theorem}
We recall some basic notions related to the fan theorem; see Troelstra
and van Dalen~\cite[Chapter 4, Section 7]{ConstMathI} for details.
\begin{definition}
  A subset $B \subseteq \BSeq$ is a \emph{bar} if
  \begin{equation}
    \label{eq:Bar}
    \forall \alpha \in \Cantor \exists n \in \Nat B(\overline{\alpha}n).
  \end{equation}
  A bar $B$ is \emph{uniform} if
  \begin{equation}
    \label{eq:UniformBar}
    \exists N \in \Nat \forall \alpha \in \Cantor \exists n \leq N B(\overline{\alpha}n).
  \end{equation}
  The \emph{decidable fan theorem} reads:
  \begin{description}
    \item[($\DFAN$)] Every decidable bar is uniform.
  \end{description}
\end{definition}
Let $\Nat^{*}$ be the set of finite sequences of $\Nat$.
A \emph{fan} is a 
decidable subset $T \subseteq \FSeq$ such that 
\begin{enumerate}
  \item  $\nil \in T$,
  \item  $\forall s \in \FSeq 
    \left(s \in T \leftrightarrow \exists n \in \Nat \left( s *
    \seq{n} \in T \right)  \right)$,
  \item $\exists \beta \in \Nat^{\Nat^{*}} \forall s\in T  
          \forall n \in \Nat \left( s * \seq{n} \in T \imp n \leq \beta(s) \right)$.
\end{enumerate}
A sequence $\alpha \colon \Nat \to \Nat$ is a \emph{path} in $T$, written
$\alpha \in T$, if $\forall n
\in \Nat \left( \overline{\alpha}n \in T \right)$. 
For a  fan $T$, the notion of bar and that of uniform bar are defined
as subsets of $T$ satisfying the conditions analogous to
\eqref{eq:Bar} and \eqref{eq:UniformBar}, where each occurrence of
$\forall \alpha \in \Cantor$ is replaced with $\forall \alpha \in T$.
Then one can generalise $\DFAN$ as  
\begin{description}
  \item[($\DFAN_T$)] Every decidable bar of a  fan $T$ is uniform,
\end{description}
with fan $T$ being a parameter.
Troelstra and van Dalen~\cite[Chapter~4, Proposition~7.5]{ConstMathI} show that for any fan $T$, $\DFAN$ derives $\DFAN_T$.
In particular, $\DFAN$ derives $\DFAN_{\TSeq}$.
On the other hand, since $\BSeq$ is a subfan of $\TSeq$, the proof of
Troelstra and van Dalen~\cite[Chapter~4, Proposition~7.5]{ConstMathI}
shows that $\DFAN_{\TSeq}$ derives $\DFAN$ as well.

In the same manner,
we can consider
the following variations of $\UCc$ and $\UCTcC$ for $\TSeq$:
\begin{description}
  \item[($\UCc_{\TSeq}$)] Every continuous function $f \colon \TSeq \to
  \Nat$ with a continuous modulus is uniformly continuous.
  \item[($\UCTc_{\TSeq}$)] Every continuous function $f \colon \TSeq \to
  \Real$ with a continuous modulus is uniformly continuous.
\end{description}
Here, a \emph{modulus} of $f \colon \TSeq \to
  \Nat$ (or $f \colon \TSeq \to \Real$) is a function $g \colon \TSeq \to \Nat$ (or $g \colon \Nat \to \TSeq \to \Nat$)
satisfying the condition analogous to
\eqref{eq:Modulus} (or \eqref{eq:ModulusRealValued}),
where each occurrence of $\Cantor$ is replaced by $\TTree$.
The proof of the equivalence between $\DFAN$, $\UCc$, and $\UCTcC$ carries over to $\TSeq$.
Thus,
the principles $\DFAN_{\TSeq}$, $\UCc_{\TSeq}$, and $\UCTc_{\TSeq}$ are pairwise equivalent.
Hence we have the following.
\begin{proposition}
  \label{eq:EquivalenceDFANandDFANTernary}
The principles $\DFAN$, $\UCc$, $\UCTcC$,
$\DFAN_{\TSeq}$, $\UCc_{\TSeq}$, and $\UCTc_{\TSeq}$ are pairwise equivalent.
\end{proposition}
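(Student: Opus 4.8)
The plan is to assemble the claimed web of equivalences from pieces that are either proved above or cited. I would organise the six principles into two ``triangles'' — the binary one $\{\DFAN, \UCc, \UCTcC\}$ and the ternary one $\{\DFAN_{\TSeq}, \UCc_{\TSeq}, \UCTc_{\TSeq}\}$ — establish that each triangle is internally pairwise equivalent, and then bridge the two triangles by proving $\DFAN \leftrightarrow \DFAN_{\TSeq}$. Transitivity of $\leftrightarrow$ then yields pairwise equivalence of all six.

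For the binary triangle: $\DFAN \leftrightarrow \UCc$ is Berger's theorem~\cite{BergerFANandUC}, and $\UCc \leftrightarrow \UCTcC$ is exactly Proposition~\ref{prop:ContMod}. For the bridge, the implication $\DFAN \to \DFAN_{\TSeq}$ is the special case $T = \TSeq$ of Troelstra and van Dalen~\cite[Chapter~4, Proposition~7.5]{ConstMathI}; conversely, since $\BSeq$ is a subfan of $\TSeq$, the argument of~\cite[Chapter~4, Proposition~7.5]{ConstMathI} specialises to give $\DFAN_{\TSeq} \to \DFAN$ (a decidable bar of the binary fan extends to a decidable bar of the ternary fan, and a uniform bound for the latter restricts to one for the former). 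This settles everything except the internal equivalences of the ternary triangle.

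For the ternary triangle, the plan is to observe that the proofs of the binary triangle transfer verbatim to $\TSeq$. Concretely: Lemma~\ref{lem:ContModItself} and Proposition~\ref{prop:ContMod}, as well as Berger's proof of $\DFAN \leftrightarrow \UCc$, use only that the underlying tree is finitely branching and decidable, that each path is a limit of its finite prefixes, and that a finite prefix $s$ can be canonically extended to a path $\widehat{s} = s * 0^{\omega}$ (cf.\ Notation~\ref{not:general}); none of these arguments uses binary branching specifically. Rerunning them with $\Bin$, $\Bin^{n}$ replaced by $\Ter$, $\Ter^{n}$ throughout — and noting that the counting bound behind Remark~\ref{rem:UnifContOnCantor} still holds with $3^{N}$ in place of $2^{N}$ leaves, so a least modulus of uniform continuity still exists — gives $\DFAN_{\TSeq} \leftrightarrow \UCc_{\TSeq}$ and $\UCc_{\TSeq} \leftrightarrow \UCTc_{\TSeq}$. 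The step I expect to need the most care is precisely this transfer: I would explicitly list the (mild) syntactic substitutions and confirm that each appeal to finiteness, decidability, or the canonical-extension map survives the passage from $\Cantor$ to $\TTree$, rather than leaving it to ``mutatis mutandis''. Once the ternary triangle is in place, chaining it to the binary triangle through $\DFAN \leftrightarrow \DFAN_{\TSeq}$ completes the proof.
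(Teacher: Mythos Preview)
Your proposal is correct and follows essentially the same route as the paper: the paper also splits the six principles into the binary triple $\{\DFAN,\UCc,\UCTcC\}$ (settled by Berger and Proposition~\ref{prop:ContMod}), bridges to the ternary side via Troelstra--van Dalen's result that $\DFAN \leftrightarrow \DFAN_{\TSeq}$, and then simply remarks that the proof of the binary equivalence ``carries over to $\TSeq$'' to obtain the ternary triple. Your explicit listing of the properties needed for the transfer (finite branching, decidability, canonical extension $\widehat{s}$, and the survival of Remark~\ref{rem:UnifContOnCantor}) is in fact more detailed than the paper's treatment.
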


\subsection{Equivalence of $\DFAN$ and $\UCTc$}
\label{sec:EquivalenceDFANUCTc}
First, we show that $\DFAN$ implies $\UCTcT$ with a help of the
following lemma.
\begin{lemma}
  \label{lem:UCMod}
  If $f \colon \UInt \to \Real$ has a uniformly continuous ternary modulus,
  then $f$ is uniformly continuous.
\end{lemma}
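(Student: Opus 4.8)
The goal is to show that a function $f \colon \UInt \to \Real$ with a uniformly continuous ternary modulus is uniformly continuous. By Theorem \ref{prop:UniversalQuotient}, it suffices to prove that $f \circ \Phi \colon \TTree \to \Real$ is uniformly continuous. So the plan is to fix a uniformly continuous ternary modulus $g \colon \Nat \to \TTree \to \Nat$ of $f$, fix $k \in \Nat$, and produce a uniform modulus $n_{k}$ such that whenever $\overline{\alpha}n_{k} = \overline{\beta}n_{k}$, we have $|f(\Phi(\alpha)) - f(\Phi(\beta))| \leq 2^{-k}$.

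First I would use the uniform continuity of $g_{k}$ to extract a uniform bound: there is $M_{k} \in \Nat$ such that $\overline{\alpha}M_{k} = \overline{\beta}M_{k}$ implies $g_{k}(\alpha) = g_{k}(\beta)$, and moreover (since the values $g_{k}(\widehat{s})$ for $s \in \Ter^{M_{k}}$ form a finite set) there is a uniform upper bound $B_{k} = \max\{ g_{k}(\widehat{s}) \mid s \in \Ter^{M_{k}} \}$ such that $g_{k}(\alpha) \leq B_{k}$ for all $\alpha \in \TTree$. Now set $n_{k} \defeql \max\{M_{k}, B_{k} + 1\}$ (adding a small constant to absorb the approximation error from $\Phi$; by Corollary \ref{cor:UnifContPhi}, $\overline{\alpha}m = \overline{\beta}m$ gives $|\Phi(\alpha) - \Phi(\beta)| \leq 2^{-m}$). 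Suppose $\overline{\alpha}n_{k} = \overline{\beta}n_{k}$. Then $|\Phi(\alpha) - \Phi(\beta)| \leq 2^{-n_{k}} \leq 2^{-g_{k}(\alpha)}$ since $g_{k}(\alpha) \leq B_{k} < n_{k}$. Taking $x = \Phi(\beta)$ in the defining property of the ternary modulus (with the path $\alpha$), we get $|f(\Phi(\alpha)) - f(\Phi(\beta))| \leq 2^{-k}$. This exhibits $f \circ \Phi$ as uniformly continuous with modulus $k \mapsto n_{k}$, and Theorem \ref{prop:UniversalQuotient} concludes.

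The main obstacle is a bookkeeping issue rather than a conceptual one: the ternary modulus condition is asymmetric — it controls $|f(\Phi(\alpha)) - f(x)|$ only when $\Phi(\alpha)$ plays the role of the ``center'' and $x$ is arbitrary — so when comparing $\Phi(\alpha)$ and $\Phi(\beta)$ one must carefully pick which of the two is the center, and ensure the chosen bound $g_{k}(\alpha)$ (or a uniform majorant of it) is small enough relative to $n_{k}$ that the $\Phi$-approximation distance $2^{-n_{k}}$ fits inside $2^{-g_{k}(\alpha)}$. Getting the uniform majorant $B_{k}$ is where uniform (as opposed to merely pointwise) continuity of $g_{k}$ is essential: over the finitely many cylinders of depth $M_{k}$, each represented by some $\widehat{s}$, the value of $g_{k}$ is constant, so the maximum over $s \in \Ter^{M_{k}}$ really does bound $g_{k}$ everywhere. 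Once this is in place the rest is immediate from Corollary \ref{cor:UnifContPhi} and Theorem \ref{prop:UniversalQuotient}.
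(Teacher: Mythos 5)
Your proof is correct, and the key step---using uniform continuity of $g_{k}$ to majorise $g_{k}(\alpha)$ by a finite maximum over $\Ter^{N_{k}}$---is exactly the paper's. The difference is in how you pass from the ternary-modulus condition to uniform continuity of $f$ on $\UInt$. You show that $f \circ \Phi$ is uniformly continuous and then invoke Theorem~\ref{prop:UniversalQuotient}; that theorem is proved earlier, so there is no circularity, but its ``if'' direction rests on the quotient property (Proposition~\ref{prop:Quotient}) and the whole $\rho$-machinery. The paper avoids this: it takes arbitrary $x, y \in \UInt$ with $|x - y| \leq 2^{-\omega(k)}$, writes $x \simeq \Phi(\alpha_{x})$ using only the surjectivity of $\Phi$ (Proposition~\ref{prop:Surject}), and applies the ternary-modulus condition with centre $\Phi(\alpha_{x})$ and second argument $y$ \emph{directly}---the point being that the condition already quantifies over all $x \in \UInt$ in the second slot, so only one of the two points ever needs to be represented by a path. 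Your remark about the asymmetry of the ternary modulus is on target, but it actually cuts in favour of the lighter argument: since the non-centre argument ranges over all of $\UInt$, you never need to compare two paths, and hence never need the quotient theorem. Both routes are valid; the paper's is more economical in its dependencies.
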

\begin{proof}
  Let $g \colon \Nat \to \TTree \to \Nat$ be a ternary modulus of $f$, where
  $g_{k}$ is uniformly continuous for each $k \in \Nat$. Fix
  $k \in \Nat$, and let $N_{k}$ be the least modulus of uniform continuity
  of $g_{k}$ (cf.\ Remark \ref{rem:UnifContOnCantor}). Put
  \[
    \omega(k) \defeql \max \left\{ g_{k}(\widehat{s}) \mid s \in
    \Ter^{N_{k}} \right\}.
  \]
  Let $x, y \in \UInt$ be regular sequences such that $|x - y| \leq
  2^{- \omega(k)}$.  By Proposition~\ref{prop:Surject}, we 
  have $x \simeq \Phi(\alpha_{x})$.
  Since $g_{k}(\alpha_{x}) \leq \omega(k)$, we have
  $|\Phi(\alpha_{x}) - y |
  \leq 2^{-g_{k}(\alpha_{x})}$.  Since $g$ is a ternary modulus of $f$,
    \[
      |f(x) - f(y)| \simeq |f(\Phi(\alpha_{x})) - f(y)| \leq 2^{-k}.
    \]
  Therefore $f$ is uniformly continuous with modulus $\omega$.
\end{proof}

\begin{proposition}
  \label{prop:DFANimpliesUContMod}
  $\DFAN$ implies $\UCTcT$.
\end{proposition}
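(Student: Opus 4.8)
The plan is to invoke Lemma~\ref{lem:UCMod}, which reduces the task to showing that $f$ admits a \emph{uniformly} continuous ternary modulus. So, given a continuous ternary modulus $g \colon \Nat \to \TTree \to \Nat$ of $f$, I would proceed in two steps: (i) replace $g$ by a continuous ternary modulus $G$ of $f$ such that each $G_{k} \colon \TTree \to \Nat$ is a modulus of itself; (ii) use $\DFAN$ --- in the guise of $\UCc_{\TSeq}$, which is available by Proposition~\ref{eq:EquivalenceDFANandDFANTernary} --- to conclude that each $G_{k}$ is uniformly continuous, so that $G$ is a uniformly continuous ternary modulus and Lemma~\ref{lem:UCMod} applies.

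Step (i) is the ternary-modulus analogue of Lemma~\ref{lem:ContModItself}. Define $G_{k}(\alpha)$ to be the least $n$ such that $g_{k+1}(\widehat{\overline{\alpha}n}) < n$; this is well defined since $g_{k+1}$ is pointwise continuous, and exactly as in Lemma~\ref{lem:ContModItself} the function $G_{k}$ is pointwise continuous and is a modulus of itself. The point that needs checking is that $G$ is still a ternary modulus of $f$. Suppose $|\Phi(\alpha) - x| \leq 2^{-G_{k}(\alpha)}$, and put $n = G_{k}(\alpha)$ (so $n \geq 1$) and $\beta = \widehat{\overline{\alpha}n}$. Then $\overline{\alpha}n = \overline{\beta}n$, so $|\Phi(\alpha) - \Phi(\beta)| \leq 2^{-n}$ by Corollary~\ref{cor:UnifContPhi}; hence $|\Phi(\beta) - x| \leq 2^{-n} + 2^{-n} = 2^{-(n-1)}$, and since $g_{k+1}(\beta) < n$ both $|\Phi(\beta) - x|$ and $|\Phi(\beta) - \Phi(\alpha)|$ are $\leq 2^{-g_{k+1}(\beta)}$. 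Applying the defining property of the ternary modulus $g$ at $\beta$ twice and using the triangle inequality gives $|f(\Phi(\alpha)) - f(x)| \leq 2^{-(k+1)} + 2^{-(k+1)} = 2^{-k}$.

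Step (ii) is then immediate: each $G_{k}$ is a continuous function $\TTree \to \Nat$ that possesses a continuous modulus, namely $G_{k}$ itself, so $\UCc_{\TSeq}$ makes $G_{k}$ uniformly continuous; doing this for every $k$ yields a uniformly continuous ternary modulus of $f$, and Lemma~\ref{lem:UCMod} gives that $f$ is uniformly continuous. (If one prefers to use the fan theorem directly: $B_{k} = \{\, s \in \TSeq \mid G_{k}(\widehat{s}) < \lh{s}\,\}$ is a decidable bar of $\TSeq$ that is closed under extension, so $\DFAN_{\TSeq}$ yields a bound $N_{k}$ with $\overline{\alpha}N_{k} \in B_{k}$ for every $\alpha$; since $G_{k}$ is a modulus of itself this forces $G_{k} < N_{k}$ everywhere and uniform continuity of $G_{k}$ with modulus $N_{k}$.)

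I expect the only delicate point to be the verification in Step (i): unlike in the $\Nat$-valued setting of Lemma~\ref{lem:ContModItself}, the ternary-modulus condition is phrased via closeness of real numbers rather than agreement of initial segments, so the triangle-inequality estimate must be run with some care, the extra factor of $2$ being absorbed by the shift from $k$ to $k+1$. Everything else is a bookkeeping application of Proposition~\ref{eq:EquivalenceDFANandDFANTernary} and Lemma~\ref{lem:UCMod}. I would also note in passing that Lemma~\ref{lem:UCMod} can be bypassed entirely: $g$ itself is already a continuous modulus of $f \circ \Phi \colon \TTree \to \Real$ in the sense of $\UCTc_{\TSeq}$ (by Corollary~\ref{cor:UnifContPhi} and the ternary-modulus property of $g$), so that $\UCTc_{\TSeq}$ together with Theorem~\ref{prop:UniversalQuotient} gives the conclusion directly; but the route through Lemma~\ref{lem:UCMod} has the advantage of producing an explicit uniformly continuous modulus of $f$.
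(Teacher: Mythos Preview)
Your proposal is correct and follows essentially the same route as the paper: define $G_{k}(\alpha)$ as the least $n$ with $g_{k+1}(\widehat{\overline{\alpha}n}) < n$, verify via Corollary~\ref{cor:UnifContPhi} and the triangle inequality that $G$ is a ternary modulus of $f$ (the paper's estimate is literally the same as yours), then invoke Proposition~\ref{eq:EquivalenceDFANandDFANTernary} to make each $G_{k}$ uniformly continuous and finish with Lemma~\ref{lem:UCMod}. Your side remark about the alternative route through $\UCTc_{\TSeq}$ and Theorem~\ref{prop:UniversalQuotient} is a nice observation not made explicit in the paper.
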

\begin{proof}
  Assume $\DFAN$.
  Let $f \colon \UInt \to \Real$ be a function with a continuous
  ternary modulus $g \colon \Nat \to \TTree \to \Nat$. Define a function 
  $G \colon \Nat \to \TTree \to \Nat$ by 
  \[
    G_{k}(\alpha) \defeql \text{the least $n \in \Nat$ such that
      $g_{k + 1}(\widehat{\overline{\alpha}n}) < n$}.
  \]
  For each $k \in \Nat$, the function $G_{k}$
  is clearly a continuous modulus of itself.  We show that $G$ is a ternary modulus of $f$.
  Let $k \in \Nat$, $\alpha \in \TTree$, and 
  $x \in \UInt$, and suppose that $|\Phi(\alpha)- x| \leq
    2^{- G_{k}(\alpha)}$. By Corollary \ref{cor:UnifContPhi}, we
    have
    \[
      |\Phi(\widehat{\overline{\alpha}G_{k}(\alpha)}) - \Phi(\alpha)|
      \leq 2^{-G_{k}(\alpha)}
      \leq 2^{-g_{k + 1}(\widehat{\overline{\alpha}G_{k}(\alpha)})}
    \]
    and
    \[
      \begin{aligned}
        |\Phi(\widehat{\overline{\alpha}G_{k}(\alpha)}) - x| 
        &\leq
        |\Phi(\widehat{\overline{\alpha}G_{k}(\alpha)}) - \Phi(\alpha)| +
        |\Phi(\alpha) - x|  \\
        &\leq
        2^{-G_{k}(\alpha)} + 2^{-G_{k}(\alpha)}\\
        &\leq
        2^{-g_{k + 1}(\widehat{\overline{\alpha}G_{k}(\alpha)})}.
      \end{aligned}
    \]
  Since $g$ is a ternary modulus of $f$, 
  \[
    \begin{aligned}
      |f(\Phi(\alpha)) - f(x)| 
      &\leq 
      |f(\Phi(\alpha)) - f(\Phi(\widehat{\overline{\alpha}G_{k}(\alpha)}))|
      + |f(\Phi(\widehat{\overline{\alpha}G_{k}(\alpha)})) - f(x)| \\
      &\leq
      2^{-(k+1)} + 2^{-(k+1)}
      =
      2^{-k}.
    \end{aligned}
  \]
  Hence $G$ is a ternary modulus of $f$. 
  Now $G_{k}$ is uniformly
  continuous by Proposition~\ref{eq:EquivalenceDFANandDFANTernary}.
  Therefore $f$ is uniformly continuous by
  Lemma~\ref{lem:UCMod}.
\end{proof}

To show that $\UCTcT$ implies $\DFAN$, we construct from a decidable
bar $B \subseteq \BSeq$ a function $f \colon \UInt \to \Real$
with a continuous ternary modulus in such a way that uniform continuity of
$f$ implies uniformity of $B$. The construction of 
$f$ from $B$ is analogous to those of Loeb \cite[Theorem
5.1]{Loeb05} and Bridges and Diener~\cite{PseudoCompUIntEquivUCT}, but we also need to construct a continuous ternary modulus of
$f$. The reader should consult Notation \ref{not:general} and
Notation \ref{not:Interval}.

The \emph{Cantor's discontinuum} is the image of the function $\kappa
\colon \Cantor \to \UInt$ defined by
\begin{equation}
  \label{def:kappa}
  \kappa(\alpha) 
  \defeql 
  \Bigl\langle \sum_{i < n}2 \alpha_i 3^{-(i+1)}\Bigr\rangle_{n \in \Nat}.
\end{equation}
To each $s \in \BSeq$, assign an interval $\CSet_s$ with
rational endpoints:
\[
  \CSet_s
  \defeql
  \left[ \sum_{i < \lh{s}} 2 s_{i} 3^{-(i+1)},\;
   3^{-\lh{s}} + \sum_{i < \lh{s}} 2 s_{i} 3^{-(i+1)}  \right].
\]
For each $n \in \Nat$ and $s \in \Bin^{n}$, the interval $\CSet_s$
is in the $n$-th level of Cantor's middle-third sets, which is of
length $3^{-n}$.

Let $L \colon \Nat \to \Nat$ be the function defined by 
\[
  L(k) \defeql \text{the least $n$ such that $2^{-n} \leq 3^{-k}$}.
\]
To each $\alpha \in \TTree$, assign a binary sequence $\gamma_{\alpha}
\in \Cantor$ by primitive recursion:
\[
  \begin{aligned}
    \gamma_{\alpha}(0) 
    &\defeql 
    \begin{cases}
      0 & \text{if $\snd{\RInt_{\overline{\alpha}L(1)}} <
      \fst{\CSet_{\seq{1}}}$},\\
      1 & \text{otherwise},
    \end{cases} \\
    \gamma_{\alpha}(n+1) 
    &\defeql 
    \begin{cases}
      0 & \text{if $\snd{\RInt_{\overline{\alpha}L(n+2)}} <
      \fst{\CSet_{\seq{\gamma_{\alpha}(0), \dots,\gamma_{\alpha}(n),
    1}}}$},\\
      1 & \text{otherwise},
    \end{cases}
  \end{aligned}
\]
where $s \mapsto \RInt_s$ is defined as in
\eqref{eq:AssignmentInterval}.

For two paths $\alpha,\beta \in \TTree$ which represent a same real
number (i.e., $\Phi(\alpha) \simeq \Phi(\beta)$), the sequences $\gamma_{\alpha}$ and $\gamma_{\beta}$ may not
coincide. For the real numbers in the Cantor's discontinuum, however, we have
the following.
\begin{proposition}[{Loeb \cite[Theorem 4.3]{Loeb05}}]
  \label{prop:CantorDiscontinuum}
  For any $\alpha \in \TTree$ and $\beta \in \Cantor$, 
  \[
    \Phi(\alpha) \simeq \kappa(\beta) 
    \imp \forall n \in \Nat \left(
    \gamma_{\alpha}(n) = \beta(n)\right).
  \]
\end{proposition}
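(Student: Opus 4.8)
The plan is to prove by induction on $n$ that $\overline{\gamma_{\alpha}}n = \overline{\beta}n$; the conclusion $\forall n\,(\gamma_{\alpha}(n) = \beta(n))$ then follows. Two elementary localisation facts will do all the work. First, for every $\alpha \in \TTree$ and every $m \in \Nat$ one has $\Phi(\alpha) \in \RInt_{\overline{\alpha}m}$: by \eqref{eq:AssignmentInterval}, $\RInt_{\overline{\alpha}m}$ is precisely the closed interval of radius $2^{-(m+1)}$ centred at $2^{-(m+1)}N(\overline{\alpha}m) = x^{m}_{\alpha}$, so this is merely a reformulation of Lemma~\ref{lem:RepresentationApprox}. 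Second, for every $\beta \in \Cantor$ and every $m \in \Nat$ one has $\kappa(\beta) \in \CSet_{\overline{\beta}m}$, because $\kappa(\beta) - \fst{\CSet_{\overline{\beta}m}} = \sum_{i \geq m} 2\beta_{i}3^{-(i+1)}$ lies between $0$ and $\sum_{i \geq m}2\cdot 3^{-(i+1)} = 3^{-m}$, the length of $\CSet_{\overline{\beta}m}$. Writing $x$ for the real number $\Phi(\alpha) \simeq \kappa(\beta)$, and using that interval membership is a matter of $\leq$-inequalities, $x$ lies in $\RInt_{\overline{\alpha}m}$ and in $\CSet_{\overline{\beta}m}$ for all $m$.

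The induction step (which subsumes the base case $n=0$, taken with $s = \nil$) goes as follows. Assume $\overline{\gamma_{\alpha}}n = \overline{\beta}n =: s$; then $\CSet_{\overline{\gamma_{\alpha}}n} = \CSet_{s}$, so by the recursive clause defining $\gamma_{\alpha}$ we have $\gamma_{\alpha}(n) = 0$ precisely when $\snd{\RInt_{\overline{\alpha}L(n+1)}} < \fst{\CSet_{s * \seq{1}}}$ (a decidable inequality between rationals), and $\gamma_{\alpha}(n) = 1$ otherwise. Here $\CSet_{s * \seq{0}} = [\fst{\CSet_{s}},\, \fst{\CSet_{s}} + 3^{-(n+1)}]$ and $\CSet_{s * \seq{1}} = [\fst{\CSet_{s}} + 2\cdot 3^{-(n+1)},\, \fst{\CSet_{s}} + 3\cdot 3^{-(n+1)}]$, while $\RInt_{\overline{\alpha}L(n+1)}$ has length $2^{-L(n+1)}$, and $2^{-L(n+1)} < 3^{-(n+1)}$ strictly, since no positive power of $3$ is a power of $2$.

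Now split on $\beta(n)$, which is decidable. If $\beta(n) = 0$, then $x \in \CSet_{\overline{\beta}(n+1)} = \CSet_{s*\seq{0}}$, so $x \leq \fst{\CSet_{s}} + 3^{-(n+1)}$, and since $x \in \RInt_{\overline{\alpha}L(n+1)}$,
\[
  \snd{\RInt_{\overline{\alpha}L(n+1)}} \leq x + 2^{-L(n+1)} < x + 3^{-(n+1)} \leq \fst{\CSet_{s}} + 2 \cdot 3^{-(n+1)} = \fst{\CSet_{s*\seq{1}}};
\]
hence the defining condition holds and $\gamma_{\alpha}(n) = 0 = \beta(n)$. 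If $\beta(n) = 1$, then $x \in \CSet_{s*\seq{1}}$, so $\fst{\CSet_{s*\seq{1}}} \leq x \leq \snd{\RInt_{\overline{\alpha}L(n+1)}}$, whence $\neg\bigl(\snd{\RInt_{\overline{\alpha}L(n+1)}} < \fst{\CSet_{s*\seq{1}}}\bigr)$ and $\gamma_{\alpha}(n) = 1 = \beta(n)$. Either way $\overline{\gamma_{\alpha}}(n+1) = \overline{\beta}(n+1)$, which closes the induction.

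I expect the one genuinely delicate point to be the strictness of $2^{-L(n+1)} < 3^{-(n+1)}$: in the case $\beta(n) = 0$ the real $x$ can coincide with the right endpoint $\fst{\CSet_{s}} + 3^{-(n+1)}$ of $\CSet_{s*\seq{0}}$ (e.g.\ when $\beta$ is eventually $1$), and $\snd{\RInt_{\overline{\alpha}L(n+1)}}$ can coincide with $x + 2^{-L(n+1)}$, so a merely non-strict bound on the mesh of $\RInt_{\overline{\alpha}L(n+1)}$ would fail to force the defining condition; the choice of $L$ is exactly tight enough. Apart from that, the only care needed is to keep track of which inequalities are between rationals — in particular the one occurring in the definition of $\gamma_{\alpha}$, which is therefore decidable — and which involve the real $x$; the mixed chains of $\leq$ and $<$ are legitimate because $\Rat \hookrightarrow \Real$ is an order embedding, and the only case distinctions used (on $\beta(n)$ and on that rational inequality) are constructively available.
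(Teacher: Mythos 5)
Your proof is correct and follows essentially the same route as the paper's: induction on $n$, locating the common real in both $\RInt_{\overline{\alpha}L(n+1)}$ and $\CSet_{\overline{\beta}(n+1)}$, and exploiting the strict bound $2^{-L(n+1)} < 3^{-(n+1)}$ to settle the decidable comparison defining $\gamma_{\alpha}(n)$. The only cosmetic difference is that you split directly on $\beta(n)$ where the paper argues by contradiction from $\gamma_{\alpha}(n) \neq \beta(n)$, which amounts to the same case analysis.
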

\begin{proof}
  Fix $\alpha \in \TTree$ and $\beta \in \Cantor$, and suppose that
    $\Phi(\alpha) \simeq \kappa(\beta)$. We show 
    \[
      \forall n \in \Nat \left(
      \gamma_{\alpha}(n) = \beta(n)\right)
    \]
    by course of value induction on $n$. For the base case $(n=0)$,
    suppose that
    $\gamma_{\alpha}(0) \neq \beta(0)$. Then, either
    $\gamma_{\alpha}(0) = 1 \wedge \beta(0) = 0$ or 
    $\gamma_{\alpha}(0) = 0 \wedge \beta(0)=1$. 
    \smallskip
    
    \noindent\emph{Case $\gamma_{\alpha}(0) = 1 \wedge \beta(0) = 0$}:
    Then, $\fst{\CSet_{\seq{1}}} \leq \snd{\RInt_{\overline{\alpha}L(1)}}$, and so 
    $\snd{\CSet_{\overline{\beta}1}} <
    \fst{\RInt_{\overline{\alpha}L(1)}}$ by the definition of $L$. This contradicts $\Phi(\alpha) \simeq \kappa(\beta)$.
    \smallskip

    \noindent\emph{Case $\gamma_{\alpha}(0) = 0 \wedge \beta(0) = 1$}:
    Then, 
    $ \snd{\RInt_{\overline{\alpha}L(1)}} 
    < \fst{\CSet_{\overline{\beta}1}},$
    which contradicts $\Phi(\alpha) \simeq \kappa(\beta)$.

    The proof of the inductive case is similar.
\end{proof}

\begin{definition}
  For each $n \in \Nat$, define a binary relation $<_{n}$
  on $\Bin^{n}$ inductively as follows:
  \[
    \frac{}{\neg (\nil <_{0} \nil )}, 
    \qquad
    \frac{s \in \Bin^{n}}{s * \seq{0} <_{n+1} s * \seq{1}},
    \qquad
    \frac{s <_{n} t}{s * \seq{1} <_{n+1} t * \seq{0}}.
  \]
  When $s <_{n} t$, we say that $s$ is an \emph{immediate predecessor} of
  $t$ and $t$ is an \emph{immediate successor} of $s$. 
\end{definition}
The following lemmas and corollaries are for Proposition
\ref{prop:UCTcImpDFAN}. 
\begin{lemma}
  \label{lem:CharacterisationImmediateSucc}
  For each $n \in \Nat$ and $s,t \in \Bin^{n}$, 
  \[
    s <_{n} t \imp \exists u \in \BSeq \exists m \in \Nat
    \left( s = u * \seq{0} * 1^{m} \wedge t = u * \seq{1} *
    0^{m}\right).
  \]
\end{lemma}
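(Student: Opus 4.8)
The plan is to argue by induction on $n$, analysing which of the two closure rules defining $<_{n}$ was used to derive $s <_{n} t$. The base case $n = 0$ is vacuous: the inductive definition provides no rule producing a pair in $<_{0}$ (the displayed clause $\neg(\nil <_{0} \nil)$ records exactly this), so the implication holds trivially for sequences of length $0$.

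For the inductive step, assume the statement for $n$ and suppose $s <_{n+1} t$ with $s,t \in \Bin^{n+1}$. Two cases arise according to the last rule applied. If $s = v * \seq{0}$ and $t = v * \seq{1}$ for some $v \in \Bin^{n}$ (the second rule), take $u \defeql v$ and $m \defeql 0$: then $s = u * \seq{0} * 1^{0}$ and $t = u * \seq{1} * 0^{0}$, as required. If instead $s = s' * \seq{1}$ and $t = t' * \seq{0}$ for some $s' <_{n} t'$ with $s',t' \in \Bin^{n}$ (the third rule), the induction hypothesis gives $u \in \BSeq$ and $m \in \Nat$ with $s' = u * \seq{0} * 1^{m}$ and $t' = u * \seq{1} * 0^{m}$; appending the final bit and using $1^{m} * \seq{1} = 1^{m+1}$ and $0^{m} * \seq{0} = 0^{m+1}$ yields $s = u * \seq{0} * 1^{m+1}$ and $t = u * \seq{1} * 0^{m+1}$, so $u$ together with $m+1$ witnesses the claim.

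I do not expect a genuine obstacle here: the only points that need to be stated carefully are that these two rules are the sole ways to obtain an element of $<_{n+1}$ and that both produce pairs of sequences of the same length $n+1$, both of which are immediate from the inductive definition. In effect the lemma merely records that $<_{n}$ is the binary-increment (immediate-successor) relation on $\Bin^{n}$, where passing from $s$ to $t$ flips the rightmost $0$ to a $1$ and flips the trailing block of $1$'s to $0$'s.
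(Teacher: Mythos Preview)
Your argument is correct and is exactly the induction on $n$ that the paper intends; the paper's own proof consists only of the phrase ``By induction on $n$.'' You have simply filled in the two cases (which rule produced $s <_{n+1} t$) with the obvious witnesses $m=0$ and $m+1$, so there is nothing to add or correct.
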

\begin{proof}
  By induction on $n$.
\end{proof}

\begin{lemma}
  \label{lem:ImmediatePredSucc}
  For each $\alpha \in \TTree$, $n \in \Nat$, and $s \in
  \Bin^{n}$, 
  \begin{enumerate}
    \item\label{lem:ImmediatePredSucc1} 
      $s <_{n} \overline{\gamma_{\alpha}}n \imp \snd{\CSet_{s}} <
      \fst{\RInt_{\overline{\alpha}L(n)}}$,
    \item\label{lem:ImmediatePredSucc2}
      $ \overline{\gamma_{\alpha}}n <_{n} s \imp 
      \snd{\RInt_{\overline{\alpha}L(n)}} < \fst{\CSet_{s}}$.
  \end{enumerate}
\end{lemma}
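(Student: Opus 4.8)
The plan is to prove \ref{lem:ImmediatePredSucc1} and \ref{lem:ImmediatePredSucc2} together by induction on $n$. The base case $n = 0$ is vacuous, since $\neg(\nil <_0 \nil)$ makes the hypotheses of both implications false. For the inductive step I would first recast the construction of $\gamma_{\alpha}$ as a single uniform clause, valid for every $n$: $\gamma_{\alpha}(n) = 0$ holds exactly when $\snd{\RInt_{\overline{\alpha}L(n+1)}} < \fst{\CSet_{(\overline{\gamma_{\alpha}}n) * \seq{1}}}$, and $\gamma_{\alpha}(n) = 1$ is the constructive negation of this, i.e.\ $\fst{\CSet_{(\overline{\gamma_{\alpha}}n) * \seq{1}}} \le \snd{\RInt_{\overline{\alpha}L(n+1)}}$. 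I would also record two routine geometric facts: (i) $L$ is non-decreasing, so $\overline{\alpha}L(n) \preccurlyeq \overline{\alpha}L(n+1)$ and hence $\RInt_{\overline{\alpha}L(n+1)} \sqsubseteq \RInt_{\overline{\alpha}L(n)}$ by the nesting of the intervals $\RInt_s$ along $\TSeq$, which yields $\fst{\RInt_{\overline{\alpha}L(n)}} \le \fst{\RInt_{\overline{\alpha}L(n+1)}}$ and $\snd{\RInt_{\overline{\alpha}L(n+1)}} \le \snd{\RInt_{\overline{\alpha}L(n)}}$; and (ii) $\RInt_{\overline{\alpha}L(n+1)}$ has length exactly $2^{-L(n+1)}$, which by the definition of $L$ is $\le 3^{-(n+1)}$, and in fact $< 3^{-(n+1)}$ because a power of $2$ cannot equal an odd number $3^{n+1} > 1$.

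Given $s \in \Bin^{n+1}$, Lemma \ref{lem:CharacterisationImmediateSucc} — together with its converse, which follows immediately from the inductive definition of $<_n$ — lets me write any $<_{n+1}$-relation in the form $u * \seq{0} * 1^m <_{n+1} u * \seq{1} * 0^m$ with $\lh u = n - m$, and I would split on $m = 0$ versus $m \ge 1$. When $m = 0$, the hypothesis pins down the value of $\gamma_{\alpha}(n)$. For \ref{lem:ImmediatePredSucc2} it forces $\gamma_{\alpha}(n) = 0$, and the defining inequality is then literally $\snd{\RInt_{\overline{\alpha}L(n+1)}} < \fst{\CSet_{(\overline{\gamma_{\alpha}}n) * \seq{1}}} = \fst{\CSet_s}$. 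For \ref{lem:ImmediatePredSucc1} it forces $\gamma_{\alpha}(n) = 1$, and since the deleted middle third between $\CSet_{(\overline{\gamma_{\alpha}}n) * \seq{0}}$ and $\CSet_{(\overline{\gamma_{\alpha}}n) * \seq{1}}$ has length $3^{-(n+1)}$, I chain $\snd{\CSet_s} = \fst{\CSet_{(\overline{\gamma_{\alpha}}n) * \seq{1}}} - 3^{-(n+1)} \le \snd{\RInt_{\overline{\alpha}L(n+1)}} - 3^{-(n+1)} < \snd{\RInt_{\overline{\alpha}L(n+1)}} - 2^{-L(n+1)} = \fst{\RInt_{\overline{\alpha}L(n+1)}}$, using fact (ii).

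When $m \ge 1$, I would truncate both sequences to length $n$: the hypothesis becomes $u * \seq{0} * 1^{m-1} <_n u * \seq{1} * 0^{m-1}$ (again via the characterisation lemma and its converse), with $\overline{\gamma_{\alpha}}n$ equal to one of the two sides, so the induction hypothesis applies at level $n$. Appending $1$'s to a binary sequence does not change the right endpoint of its Cantor interval and appending $0$'s does not change the left endpoint, so $\snd{\CSet_s}$ (resp.\ $\fst{\CSet_s}$) equals the corresponding endpoint of the truncated sequence; combining this with the induction hypothesis and the nesting inequalities of fact (i) closes the step in both parts. The only genuinely delicate point is the strict inequality in the $m = 0$ case of \ref{lem:ImmediatePredSucc1}, which hinges on $2^{-L(n+1)}$ being \emph{strictly} below the gap length $3^{-(n+1)}$; everything else is elementary bookkeeping with the endpoints of the dyadic intervals $\RInt_s$ (as given by \eqref{eq:AssignmentInterval}) and of the Cantor intervals $\CSet_s$.
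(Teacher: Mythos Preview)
Your proof is correct and rests on the same ingredients as the paper's: Lemma~\ref{lem:CharacterisationImmediateSucc}, the fact that appending $1$'s (resp.\ $0$'s) to $s$ preserves $\snd{\CSet_s}$ (resp.\ $\fst{\CSet_s}$), the strict inequality $2^{-L(k)} < 3^{-k}$ for $k \ge 1$, and the nesting of the intervals $\RInt_{\overline{\alpha}m}$. The organisation differs, however: the paper does \emph{not} induct on $n$. Given $s <_n \overline{\gamma_\alpha}n$, it applies Lemma~\ref{lem:CharacterisationImmediateSucc} once to write $s = u * \seq{0} * 1^m$ and $\overline{\gamma_\alpha}n = u * \seq{1} * 0^m$, and then jumps straight to level $\lh{u}+1$: there $\gamma_\alpha(\lh{u}) = 1$ forces $\fst{\CSet_{u * \seq{1}}} \le \snd{\RInt_{\overline{\alpha}L(\lh{u}+1)}}$, and the gap argument together with the nesting gives $\snd{\CSet_s} = \snd{\CSet_{u*\seq{0}}} < \fst{\RInt_{\overline{\alpha}L(\lh{u}+1)}} \le \fst{\RInt_{\overline{\alpha}L(n)}}$ in one stroke (and symmetrically for part~\ref{lem:ImmediatePredSucc2}). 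Your induction reaches exactly the same place, but by peeling off the trailing $1^m/0^m$ one digit at a time, so your $m \ge 1$ case is doing work the direct approach avoids. The paper's argument is therefore shorter; on the other hand, you make the strict inequality $2^{-L(n+1)} < 3^{-(n+1)}$ explicit, whereas the paper's ``Since $\snd{\CSet_{u*\seq{0}}} < \fst{\RInt_{\overline{\alpha}L(\lh{u}+1)}}$'' leaves that step to the reader.
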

\begin{proof}
  For \ref{lem:ImmediatePredSucc1},
  suppose that $s <_{n} \overline{\gamma_{\alpha}}n$. 
  By Lemma \ref{lem:CharacterisationImmediateSucc}, there
  exist $u \in \BSeq$ and  $m \in \Nat$ such that 
  $\overline{\gamma_{\alpha}}n = u * \seq{1} * 0^{m}$
  and $s = u * \seq{0} * 1^{m}$.
  Since $\snd{\CSet_{u*\seq{0}}} <
  \fst{\RInt_{\overline{\alpha}L(|u|+1)}}$, we have
  $\snd{\CSet_{s}} = \snd{\CSet_{u*\seq{0}}} <
  \fst{\RInt_{\overline{\alpha}L(|u|+1)}} \leq
  \fst{\RInt_{\overline{\alpha}L(n)}}$.
  The proof of \ref{lem:ImmediatePredSucc2} is similar.
\end{proof}
  Let $<_{n}^{+}$ denote the transitive closure of $<_{n}$.
  By induction on $n \in \Nat$, one can show that for
  any $s,t\in \Bin^{n}$, either
  $s = t$, $s <_{n}^{+} t$, or $t <_{n}^{+} s$.

\begin{corollary}
  \label{cor:ImmediatePredSucc}
  For each $\alpha \in \TTree$, $n \in \Nat$, and $s,t,u \in
  \Bin^{n}$, 
  \begin{enumerate}
    \item\label{cor:ImmediatePredSucc1}
      $s <_{n} t \wedge \snd{\CSet_{s}} < \fst{\RInt_{\overline{\alpha}L(n)}} < 
      \snd{\RInt_{\overline{\alpha}L(n)} }
      < \fst{\CSet_{t}} \to s = \overline{\gamma_{\alpha}}n \vee
      \overline{\gamma_{\alpha}}n = t$.

    \item\label{cor:ImmediatePredSucc2}
      $\text{$\CSet_{u} \approx \RInt_{\overline{\alpha}L(n)}$} \to u =
      \overline{\gamma_{\alpha}}n$.
  \end{enumerate}
\end{corollary}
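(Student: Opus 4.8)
The plan is to fix $\alpha \in \TTree$ and $n \in \Nat$, write $w \defeql \overline{\gamma_{\alpha}}n$ and $\RInt \defeql \RInt_{\overline{\alpha}L(n)}$, and argue by trichotomy for $<_{n}^{+}$ on where $w$ sits. Before that I would record three purely combinatorial facts, none depending on $\alpha$. First, from the inductive clauses defining $<_{n+1}$ one checks (by a routine induction) that every element of $\Bin^{n}$ other than $1^{n}$ has a unique immediate $<_{n}$-successor and every element other than $0^{n}$ a unique immediate $<_{n}$-predecessor; in other words $<_{n}$ is exactly the covering relation of the finite linear order $<_{n}^{+}$. Second, using Lemma~\ref{lem:CharacterisationImmediateSucc} together with the identities $\fst{\CSet_{v * \seq{0}}} = \fst{\CSet_{v}}$ and $\snd{\CSet_{v * \seq{1}}} = \snd{\CSet_{v}}$ (both immediate from the definition of $\CSet$), one gets that $s <_{n}^{+} t$ implies $\snd{\CSet_{s}} \leq \fst{\CSet_{t}}$; that is, the level-$n$ Cantor intervals $\CSet_{s}$ ($s \in \Bin^{n}$) are arranged from left to right exactly along $<_{n}^{+}$. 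The only input about $\gamma_{\alpha}$ I would use is Lemma~\ref{lem:ImmediatePredSucc}, read as follows: if $s'$ is the immediate $<_{n}$-predecessor of $w$ then $\snd{\CSet_{s'}} < \fst{\RInt}$, and if $t'$ is the immediate $<_{n}$-successor of $w$ then $\snd{\RInt} < \fst{\CSet_{t'}}$, so that $\RInt$ lies strictly between the Cantor intervals of the two $<_{n}^{+}$-neighbours of $w$.

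For part~\ref{cor:ImmediatePredSucc1} I would assume $s <_{n} t$ and $\snd{\CSet_{s}} < \fst{\RInt} < \snd{\RInt} < \fst{\CSet_{t}}$, and first rule out $w <_{n}^{+} s$: then $w \neq 1^{n}$, so $w$ has an immediate successor $t'$; from $w <_{n}^{+} s$ we get $t' = s$ or $t' <_{n}^{+} s$, hence $\snd{\CSet_{t'}} \leq \snd{\CSet_{s}}$ in either case, and then $\snd{\RInt} < \fst{\CSet_{t'}} \leq \snd{\CSet_{t'}} \leq \snd{\CSet_{s}} < \fst{\RInt}$ contradicts $\fst{\RInt} < \snd{\RInt}$. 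I would symmetrically rule out $t <_{n}^{+} w$, now using the immediate predecessor $s'$ of $w$ (which exists since $w \neq 0^{n}$): from $t <_{n}^{+} w$ we have $t = s'$ or $t <_{n}^{+} s'$, so $\fst{\CSet_{t}} \leq \fst{\CSet_{s'}}$ by the left-to-right ordering, whence $\snd{\RInt} < \fst{\CSet_{t}} \leq \fst{\CSet_{s'}} \leq \snd{\CSet_{s'}} < \fst{\RInt}$, again a contradiction. By trichotomy this leaves $w = s$ or $s <_{n}^{+} w$, and likewise $w = t$ or $w <_{n}^{+} t$; since $<_{n}$ is the covering relation of $<_{n}^{+}$, the combination $s <_{n}^{+} w <_{n}^{+} t$ is incompatible with $s <_{n} t$, so $w \in \{s, t\}$, which is the desired disjunction.

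Part~\ref{cor:ImmediatePredSucc2} follows the same pattern, slightly shorter. Assuming $\CSet_{u} \approx \RInt$, i.e.\ $\fst{\RInt} \leq \snd{\CSet_{u}}$ and $\fst{\CSet_{u}} \leq \snd{\RInt}$, the case $u <_{n}^{+} w$ supplies an immediate predecessor $s'$ of $w$ with $u = s'$ or $u <_{n}^{+} s'$, so $\snd{\CSet_{u}} \leq \snd{\CSet_{s'}} < \fst{\RInt} \leq \snd{\CSet_{u}}$, absurd; the case $w <_{n}^{+} u$ supplies an immediate successor $t'$ of $w$ with $t' = u$ or $t' <_{n}^{+} u$, so $\snd{\RInt} < \fst{\CSet_{t'}} \leq \fst{\CSet_{u}} \leq \snd{\RInt}$, absurd. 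Hence $w = u$ by trichotomy. I do not expect a genuine obstacle: all the interval arithmetic is just chaining the strict inequalities of Lemma~\ref{lem:ImmediatePredSucc} with the weak separation $\snd{\CSet_{s}} \leq \fst{\CSet_{t}}$ for $s <_{n}^{+} t$. The only points that need care are the bookkeeping of when $w$ possesses an immediate $<_{n}$-neighbour (handled by noting $w$ is not an endpoint of the order in the relevant case) and the observation that in part~\ref{cor:ImmediatePredSucc1} it is precisely the hypothesis ``$t$ is the immediate $<_{n}$-successor of $s$'' that collapses the three a priori $<_{n}^{+}$-positions of $w$ down to the two allowed ones.
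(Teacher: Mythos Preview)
Your proof is correct and follows essentially the same route as the paper's: both argue via trichotomy for $<_{n}^{+}$ and appeal to Lemma~\ref{lem:ImmediatePredSucc} to rule out the unwanted positions of $w = \overline{\gamma_{\alpha}}n$. The difference is only one of explicitness: the paper invokes Lemma~\ref{lem:ImmediatePredSucc} directly for the transitive closure $<_{n}^{+}$ (tacitly using the left-to-right ordering of the level-$n$ Cantor intervals and the fact that $<_{n}$ is the covering relation of $<_{n}^{+}$), whereas you spell out precisely these two combinatorial facts up front and then chain the inequalities step by step. Your version is therefore a bit longer but makes the implicit bookkeeping in the paper's proof transparent.
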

\begin{proof}
  \noindent \ref{cor:ImmediatePredSucc1}.
  Suppose that $s <_{n} t$ and 
  $\snd{\CSet_{s}} < \fst{\RInt_{\overline{\alpha}L(n)}} < 
      \snd{\RInt_{\overline{\alpha}L(n)} }
      < \fst{\CSet_{t}}$.
  By Lemma~\ref{lem:ImmediatePredSucc}, 
  we must have $\overline{\gamma_{\alpha}}n = s$ or 
  $s <_{n}^{+} \overline{\gamma_{\alpha}}n$, and 
  $\overline{\gamma_{\alpha}}n = t$ or 
  $\overline{\gamma_{\alpha}}n <_{n}^{+} t$.
  But $s <_{n}^{+} \overline{\gamma_{\alpha}}n$
  and $\overline{\gamma_{\alpha}}n <_{n}^{+} t$
  is impossible because $s <_{n}^{+} \overline{\gamma_{\alpha}}n$
  if and only if $\overline{\gamma_{\alpha}}n = t$ or 
  $t <_{n}^{+}\overline{\gamma_{\alpha}}n$.
  \smallskip

  \noindent \ref{cor:ImmediatePredSucc2}.
  Suppose that $\CSet_{u} \approx \RInt_{\overline{\alpha}L(n)}$.
  If $\overline{\gamma_{\alpha}}n <_{n}^{+} u$, then 
  $\snd{\RInt_{\overline{\alpha}L(n)}} < \fst{\CSet_{u}}$ by 
  Lemma~\ref{lem:ImmediatePredSucc}, a contradiction. Similarly
  $u <_{n}^{+} \overline{\gamma_{\alpha}}n$ leads to a contradiction.
  Hence $u =  \overline{\gamma_{\alpha}}n$.
\end{proof}

\begin{lemma}
  \label{lem:BetweenImmediateSuccPred}
  Let  $n \in \Nat$, and let $s,t,u \in \Bin^{n}$ be
  such that $s <_{n} u <_{n} t$. Let $\alpha \in \TTree$.
  \begin{enumerate}

    \item \label{lem:BetweenImmediateSuccPred1}
      If $\snd{\CSet_{s}} < \Phi(\alpha) \leq \snd{\CSet_{u}}$,
      then one of the following holds:
      \begin{enumerate}
        \item 
        $
        \overline{\gamma_{\alpha}}n = s$ and $\gamma_{\alpha} =
       \widebreve{\overline{\gamma_{\alpha}}n}$;
        \item 
        $\overline{\gamma_{\alpha}}n = u$.
      \end{enumerate}

    \item \label{lem:BetweenImmediateSuccPred2}
      If $\fst{\CSet_{u}} \leq \Phi(\alpha) < \fst{\CSet_{t}} $,
      then one of the following holds:
      \begin{enumerate}
        \item 
        $\overline{\gamma_{\alpha}}n = u$;
      \item
        $
        \overline{\gamma_{\alpha}}n = t$ and $\gamma_{\alpha} =
       \widehat{\overline{\gamma_{\alpha}}n}$.
      \end{enumerate}

    \item \label{lem:BetweenImmediateSuccPred3}
      If $\snd{\CSet_{s}} <  \Phi(\alpha) < \fst{\CSet_{t}} $,
      then one of the following holds:
      \begin{enumerate}
        \item 
        $
        \overline{\gamma_{\alpha}}n = s$ and $\gamma_{\alpha} =
       \widebreve{\overline{\gamma_{\alpha}}n}$;
        \item 
        $\overline{\gamma_{\alpha}}n = u$;

        \item 
        $
        \overline{\gamma_{\alpha}}n = t$ and $\gamma_{\alpha} =
       \widehat{\overline{\gamma_{\alpha}}n}$.
      \end{enumerate}
  \end{enumerate}
\end{lemma}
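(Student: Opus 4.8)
The plan is to derive part~\ref{lem:BetweenImmediateSuccPred3} from parts~\ref{lem:BetweenImmediateSuccPred1} and~\ref{lem:BetweenImmediateSuccPred2}, and to obtain part~\ref{lem:BetweenImmediateSuccPred2} as the left--right mirror image of part~\ref{lem:BetweenImmediateSuccPred1}; so the real content is part~\ref{lem:BetweenImmediateSuccPred1}. For the reduction: assuming $\snd{\CSet_s} < \Phi(\alpha) < \fst{\CSet_t}$, cotransitivity of the order on $\Real$ applied to the rationals $\fst{\CSet_u} < \snd{\CSet_u}$ (note $\lh{\CSet_u} = 3^{-n} > 0$) gives $\Phi(\alpha) < \snd{\CSet_u}$ or $\fst{\CSet_u} < \Phi(\alpha)$; in the first case part~\ref{lem:BetweenImmediateSuccPred1} (whose hypothesis $\snd{\CSet_s} < \Phi(\alpha) \leq \snd{\CSet_u}$ now holds) yields disjunct~(a) or~(b) of part~\ref{lem:BetweenImmediateSuccPred3}, and in the second case part~\ref{lem:BetweenImmediateSuccPred2} yields disjunct~(b) or~(c). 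Both applications use $s <_{n} u <_{n} t$, which is the ambient hypothesis.

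For part~\ref{lem:BetweenImmediateSuccPred1} I would first record two facts. (i)~For every $\beta \in \TTree$ and $m \in \Nat$ one has $\Phi(\beta) \in \RInt_{\overline{\beta}m}$, because $\RInt_{\overline{\beta}m}$ is the closed interval of radius $2^{-(m+1)}$ centred at $x_{\beta}^{m}$ and $\lvert x_{\beta} - x_{\beta}^{m}\rvert \leq 2^{-(m+1)}$ by Lemma~\ref{lem:RepresentationApprox}. (ii)~(\emph{Steady to the right}.) If $\overline{\gamma_{\alpha}}n = s$ and $\snd{\CSet_s} < \Phi(\alpha)$, then $\gamma_{\alpha} = \widebreve{\overline{\gamma_{\alpha}}n}$; I would prove this by induction on $k \geq n$, showing $\gamma_{\alpha}(k) = 1$: once $\gamma_{\alpha}(j) = 1$ for all $n \leq j < k$ we have $\overline{\gamma_{\alpha}}k = s * 1^{k-n}$, whose rightmost child satisfies $\snd{\CSet_{\overline{\gamma_{\alpha}}k}} = \snd{\CSet_s}$ while $\fst{\CSet_{\overline{\gamma_{\alpha}}k * \seq{1}}} = \snd{\CSet_{\overline{\gamma_{\alpha}}k}} - 3^{-(k+1)} < \snd{\CSet_{\overline{\gamma_{\alpha}}k}}$, so by~(i) $\snd{\RInt_{\overline{\alpha}L(k+1)}} \geq \Phi(\alpha) > \snd{\CSet_s} > \fst{\CSet_{\overline{\gamma_{\alpha}}k * \seq{1}}}$, and the defining clause of $\gamma_{\alpha}$ forces $\gamma_{\alpha}(k) = 1$. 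The mirror fact (needed for part~\ref{lem:BetweenImmediateSuccPred2}) reads: if $\overline{\gamma_{\alpha}}n = t$ and $\Phi(\alpha) < \fst{\CSet_t}$, then $\gamma_{\alpha} = \widehat{\overline{\gamma_{\alpha}}n}$; here one additionally uses $\lh{\RInt_{\overline{\alpha}L(k+1)}} = 2^{-L(k+1)} \leq 3^{-(k+1)}$ to push $\snd{\RInt_{\overline{\alpha}L(k+1)}}$ strictly below $\fst{\CSet_{\overline{\gamma_{\alpha}}k * \seq{1}}}$.

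Now write $\RInt \defeql \RInt_{\overline{\alpha}L(n)}$, a nondegenerate rational interval with $\lh{\RInt} = 2^{-L(n)} \leq 3^{-n}$ that contains $\Phi(\alpha)$ by~(i). I would then split on the decidable comparisons of the (rational) endpoints of $\RInt$ with $\snd{\CSet_s}$ and $\fst{\CSet_u}$. If $\fst{\RInt} \leq \snd{\CSet_s}$: since $\snd{\CSet_s} < \Phi(\alpha) \leq \snd{\RInt}$, the point $\snd{\CSet_s}$ lies in $\RInt \cap \CSet_s$, hence $\CSet_s \approx \RInt$ and Corollary~\ref{cor:ImmediatePredSucc}\,(\ref{cor:ImmediatePredSucc2}) gives $\overline{\gamma_{\alpha}}n = s$, so~(ii) yields disjunct~(a). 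Otherwise $\snd{\CSet_s} < \fst{\RInt}$. If moreover $\fst{\CSet_u} \leq \snd{\RInt}$, then $\fst{\RInt} \leq \Phi(\alpha) \leq \snd{\CSet_u}$ together with $\fst{\CSet_u} \leq \snd{\RInt}$ gives $\CSet_u \approx \RInt$, hence $\overline{\gamma_{\alpha}}n = u$ by Corollary~\ref{cor:ImmediatePredSucc}\,(\ref{cor:ImmediatePredSucc2}): disjunct~(b). If instead $\snd{\RInt} < \fst{\CSet_u}$, then $\snd{\CSet_s} < \fst{\RInt} < \snd{\RInt} < \fst{\CSet_u}$ (middle inequality by nondegeneracy of $\RInt$), so Corollary~\ref{cor:ImmediatePredSucc}\,(\ref{cor:ImmediatePredSucc1}) applied to $s <_{n} u$ gives $\overline{\gamma_{\alpha}}n = s$ or $\overline{\gamma_{\alpha}}n = u$; the latter is disjunct~(b), and for the former $\snd{\CSet_s} < \Phi(\alpha)$ and~(ii) give disjunct~(a). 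Part~\ref{lem:BetweenImmediateSuccPred2} is the exact reflection: split on the comparisons of the endpoints of $\RInt$ with $\fst{\CSet_t}$ and $\snd{\CSet_u}$, using $u <_{n} t$ and the mirror of~(ii).

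The main obstacle I anticipate is organisational rather than conceptual: arranging the case split so that it is constructively legitimate (which it is, since every comparison involved is between rational numbers and every disjunction from Corollary~\ref{cor:ImmediatePredSucc}\,(\ref{cor:ImmediatePredSucc1}) is a decidable equality of finite binary sequences) and exhaustive, and checking that in the ``gap'' subcase the dichotomy supplied by Corollary~\ref{cor:ImmediatePredSucc}\,(\ref{cor:ImmediatePredSucc1}) matches precisely the two permitted disjuncts, with no residual third possibility. The only genuine computations are the elementary endpoint identities for the Cantor intervals $\CSet_{\cdot}$ (the rightmost, resp.\ leftmost, child preserves the right, resp.\ left, endpoint; $\fst{\CSet_{s * \seq{1}}}$ sits two thirds of the way along $\CSet_s$) and the length bound $2^{-L(k+1)} \leq 3^{-(k+1)}$, all of which feed the two steady-direction inductions in fact~(ii).
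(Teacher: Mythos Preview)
Your proposal is correct and uses the same key ingredients as the paper (Corollary~\ref{cor:ImmediatePredSucc} and the inductive ``steady in one direction'' argument you isolate as fact~(ii)), but the organisation is genuinely different. The paper proves part~\ref{lem:BetweenImmediateSuccPred1} by first passing to a sufficiently large level $m \geq n$ so that $\snd{\CSet_{s}} < \fst{\RInt_{\overline{\alpha}L(m)}}$, and then does a single decidable case split on whether $\snd{\RInt_{\overline{\alpha}L(m)}} < \fst{\CSet_{u}}$: in the gap case it applies Corollary~\ref{cor:ImmediatePredSucc}\,(\ref{cor:ImmediatePredSucc1}) at level~$m$ with the pair $s*1^{m-n} <_{m} u*0^{m-n}$; in the overlap case it uses $\RInt_{\overline{\alpha}L(m)} \approx \CSet_{u}$ (hence also at level~$n$) and Corollary~\ref{cor:ImmediatePredSucc}\,(\ref{cor:ImmediatePredSucc2}). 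You instead stay at level~$n$ throughout and compensate with a three-way case split on the endpoints of $\RInt_{\overline{\alpha}L(n)}$, picking up an extra case ($\fst{\RInt} \leq \snd{\CSet_{s}}$) that the paper's choice of $m$ rules out in advance. Your route avoids extracting the witness~$m$ from the real strict inequality $\snd{\CSet_{s}} < \Phi(\alpha)$, at the price of one more (decidable, rational) comparison; the paper's route keeps the case analysis shorter but relies on descending to level~$m$. Your explicit statement and inductive proof of fact~(ii) makes transparent a step the paper dispatches with a one-line appeal to ``the construction of $\gamma_{\alpha}$''.
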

\begin{proof}
 \ref{lem:BetweenImmediateSuccPred1}. Suppose
 that
  $\snd{\CSet_{s}} < \Phi(\alpha) \leq \snd{\CSet_{u}}$.
  Then, 
    $
    \snd{\CSet_{s}} < \fst{\RInt_{\overline{\alpha}L(m)}} \leq
    \snd{\CSet_{u}}
    $
  for sufficiently large $m \geq n$.
  Putting $N = m - n$, we have
  \[
    \snd{\CSet_{s*1^{N}}}  = \snd{\CSet_{s}}
    <  \fst{\RInt_{\overline{\alpha}L(m)}} 
    \leq \snd{\CSet_{u}}.
  \]
  Since $\snd{\RInt_{\overline{\alpha}L(m)}}
           < \fst{\CSet_{u}}$ or $\fst{\CSet_{u}} \leq \snd{\RInt_{\overline{\alpha}L(m)}}$, one of the following holds:
  \begin{enumerate}
    \item\label{lem:BetweenImmediateSuccPredProof1} 
      $\snd{\CSet_{s * 1^{N}}} 
           < \fst{\RInt_{\overline{\alpha}L(m)}}
           < \snd{\RInt_{\overline{\alpha}L(m)}}
           < \fst{\CSet_{u}} = \fst{\CSet_{u*0^{N}}}$;

   \item\label{lem:BetweenImmediateSuccPredProof2}
     $\RInt_{\overline{\alpha}L(m)} \approx \CSet_{u}$.
  \end{enumerate}
  In the case \ref{lem:BetweenImmediateSuccPred1}, either
  $s*1^{N} = \overline{\gamma_{\alpha}}m$ or
  $\overline{\gamma_{\alpha}}m = u * 0^{N}$
  by Corollary \ref{cor:ImmediatePredSucc}.
  If $s*1^{N} = \overline{\gamma_{\alpha}}m$,
  then by the construction of $\gamma_{\alpha}$ and 
  the fact 
  $\snd{\CSet_{\overline{\gamma_{\alpha}}m}}
   = \snd{\CSet_{s * 1^{N}}}
   < \fst{\RInt_{\overline{\alpha}L(m)}}$,
   we must have
  $\gamma_{\alpha} = \widebreve{\overline{\gamma_{\alpha}}m} =
  \widebreve{s} = \widebreve{\overline{\gamma_{\alpha}}n}$.
  If $\overline{\gamma_{\alpha}}m = u * 0^{N}$, then
  $\overline{\gamma_{\alpha}}n = u$.
  In the case \ref{lem:BetweenImmediateSuccPred2}, 
  we have $\RInt_{\overline{\alpha}L(n)} \approx \CSet_u$ as
  well. Thus $\overline{\gamma_{\alpha}}n = u$
  by Corollary~\ref{cor:ImmediatePredSucc}.
  \smallskip

\noindent\ref{lem:BetweenImmediateSuccPred2}.
  The proof is similar to \ref{lem:BetweenImmediateSuccPred1}.
  \smallskip

\noindent \ref{lem:BetweenImmediateSuccPred3}.
  If $\snd{\CSet_{s}} <  \Phi(\alpha) < \fst{\CSet_{t}}$,
  then either 
  $\snd{\CSet_{s}} < \Phi(\alpha) \leq \snd{\CSet_{u}}$
  or
  $\fst{\CSet_{u}} \leq \Phi(\alpha) < \fst{\CSet_{t}} $
  Then, the desired conclusion follows from 
  \ref{lem:BetweenImmediateSuccPred1} and 
  \ref{lem:BetweenImmediateSuccPred2}.
\end{proof}

\begin{corollary}
  \label{cor:EqualPathInitialSegment}
 Let $\alpha, \beta \in \TTree$ be such that $\Phi(\alpha) \simeq
 \Phi(\beta)$. For each $n \in \Nat$, one of the following holds:
 \begin{enumerate}
   \item  $\overline{\gamma_{\alpha}}n <_{n} \overline{\gamma_{\beta}}n$,
     $\gamma_{\alpha} = \widebreve{\overline{\gamma_{\alpha}}n}$, and 
     $\gamma_{\beta} = \widehat{\overline{\gamma_{\beta}}n}$;

   \item $\overline{\gamma_{\alpha}}n = \overline{\gamma_{\beta}}n$;

   \item $\overline{\gamma_{\beta}}n <_{n}
     \overline{\gamma_{\alpha}}n$,
     $\gamma_{\beta} = \widebreve{\overline{\gamma_{\beta}}n}$, and 
     $\gamma_{\alpha} = \widehat{\overline{\gamma_{\alpha}}n}$.
 \end{enumerate}
\end{corollary}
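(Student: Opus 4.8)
The plan is to fix $\alpha,\beta\in\TTree$ with $\Phi(\alpha)\simeq\Phi(\beta)$, let $x$ denote this common real number, and argue for a fixed $n$ (the case $n=0$ being trivially case~(2)). First I would record three facts. (i) $x\in\RInt_{\overline{\alpha}\ell}$ and $x\in\RInt_{\overline{\beta}\ell}$ for every $\ell$; this is immediate from Lemma~\ref{lem:RepresentationApprox}, since $\RInt_{\overline{\alpha}\ell}$ is the rational interval of radius $2^{-(\ell+1)}$ centred at $x^{\ell}_{\alpha}$. (ii) $\RInt_{\overline{\alpha}L(k)}$ has length $2^{-L(k)}\le 3^{-k}$ by the definition of $L$. (iii) The intervals $\CSet_{s}$ for $s\in\Bin^{n}$ are laid out from left to right along the $<_{n}^{+}$-ordering of $\Bin^{n}$, i.e.\ $s<_{n}^{+}t\imp\snd{\CSet_{s}}\le\fst{\CSet_{t}}$ (a routine induction on $n$, evident from Figure~\ref{fig:Interval}), and iterating ``right third''/``left third'' behaves as $\snd{\CSet_{s*1^{j}}}=\snd{\CSet_{s}}$, $\fst{\CSet_{s*1^{j}}}=\snd{\CSet_{s}}-3^{-(\lh{s}+j)}$, $\fst{\CSet_{s*0^{j}}}=\fst{\CSet_{s}}$, and $\snd{\CSet_{s*0^{j}}}=\fst{\CSet_{s}}+3^{-(\lh{s}+j)}$.

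\emph{Locating $\overline{\gamma_{\alpha}}n$ and $\overline{\gamma_{\beta}}n$.} Since $<_{n}^{+}$ linearly orders $\Bin^{n}$ (recorded just before Corollary~\ref{cor:ImmediatePredSucc}), the two segments are $<_{n}^{+}$-comparable; if they are equal we are in case~(2), so assume $\overline{\gamma_{\alpha}}n<_{n}^{+}\overline{\gamma_{\beta}}n$ (the reverse case is symmetric in $\alpha,\beta$ and gives case~(3)). Let $p$ be the immediate $<_{n}$-predecessor of $\overline{\gamma_{\beta}}n$ and $q$ the immediate $<_{n}$-successor of $\overline{\gamma_{\alpha}}n$; both exist under the assumption. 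Lemma~\ref{lem:ImmediatePredSucc}(\ref{lem:ImmediatePredSucc1}) applied to $\beta$ gives $\snd{\CSet_{p}}<\fst{\RInt_{\overline{\beta}L(n)}}\le x$, and Lemma~\ref{lem:ImmediatePredSucc}(\ref{lem:ImmediatePredSucc2}) applied to $\alpha$ gives $x\le\snd{\RInt_{\overline{\alpha}L(n)}}<\fst{\CSet_{q}}$. From $\overline{\gamma_{\alpha}}n<_{n}^{+}\overline{\gamma_{\beta}}n$ we get $\overline{\gamma_{\alpha}}n=p$ or $q\le_{n}^{+}p$; but the second case would give $\fst{\CSet_{q}}\le\snd{\CSet_{p}}$ by fact (iii), hence $x<\fst{\CSet_{q}}\le\snd{\CSet_{p}}<x$, a contradiction. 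So $\overline{\gamma_{\alpha}}n=p$, i.e.\ $\overline{\gamma_{\alpha}}n<_{n}\overline{\gamma_{\beta}}n$, and reapplying Lemma~\ref{lem:ImmediatePredSucc}(\ref{lem:ImmediatePredSucc2}) to $\alpha$ with $s=\overline{\gamma_{\beta}}n$ yields
\[
  \snd{\CSet_{\overline{\gamma_{\alpha}}n}}<x<\fst{\CSet_{\overline{\gamma_{\beta}}n}}.
\]

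\emph{Determining the tails.} Write $s=\overline{\gamma_{\alpha}}n$ and $t=\overline{\gamma_{\beta}}n$. It remains to show $\gamma_{\alpha}=\widebreve{s}$ and $\gamma_{\beta}=\widehat{t}$, which gives case~(1). For $\gamma_{\alpha}$, I would show by induction on $j$ that $\overline{\gamma_{\alpha}}(n+j)=s*1^{j}$ and $\gamma_{\alpha}(n+j)=1$: granting $\overline{\gamma_{\alpha}}(n+j)=s*1^{j}$, the defining alternative for $\gamma_{\alpha}(n+j)$ compares $\snd{\RInt_{\overline{\alpha}L(n+j+1)}}$ with $\fst{\CSet_{s*1^{j+1}}}=\snd{\CSet_{s}}-3^{-(n+j+1)}$, and since $\snd{\RInt_{\overline{\alpha}L(n+j+1)}}\ge x>\snd{\CSet_{s}}>\fst{\CSet_{s*1^{j+1}}}$ that alternative fails, forcing $\gamma_{\alpha}(n+j)=1$ and $\overline{\gamma_{\alpha}}(n+j+1)=s*1^{j+1}$. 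For $\gamma_{\beta}$, I would show by induction on $j$ that $\overline{\gamma_{\beta}}(n+j)=t*0^{j}$ and $\gamma_{\beta}(n+j)=0$: granting $\overline{\gamma_{\beta}}(n+j)=t*0^{j}$, one has $\fst{\CSet_{t*0^{j}*\seq{1}}}=\fst{\CSet_{t}}+2\cdot 3^{-(n+j+1)}$, whereas $\snd{\RInt_{\overline{\beta}L(n+j+1)}}\le\fst{\RInt_{\overline{\beta}L(n+j+1)}}+3^{-(n+j+1)}\le x+3^{-(n+j+1)}<\fst{\CSet_{t}}+2\cdot 3^{-(n+j+1)}$, so the defining alternative $\gamma_{\beta}(n+j)=0$ holds and $\overline{\gamma_{\beta}}(n+j+1)=t*0^{j+1}$.

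The hard part is this last step: one must keep straight that repeatedly taking the right (resp.\ left) third of a Cantor interval pins down its right (resp.\ left) endpoint, and then feed this, together with the length bound $2^{-L(k)}\le 3^{-k}$ and the two one-sided estimates on $x$ from the locating step, into the recursion defining $\gamma_{\alpha}$ (resp.\ $\gamma_{\beta}$) so that the same alternative is resolved at every level beyond $n$. By contrast the locating step is short once the left-to-right arrangement of the $\CSet_{s}$ is in hand, and the comparability of $\overline{\gamma_{\alpha}}n$ with $\overline{\gamma_{\beta}}n$ is already available in the text.
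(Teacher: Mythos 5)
Your argument is correct, but it takes a genuinely different route from the paper's. The paper proves the corollary by invoking Lemma~\ref{lem:BetweenImmediateSuccPred}(\ref{lem:BetweenImmediateSuccPred3}) twice: once to place $\overline{\gamma_{\beta}}n$ relative to the bracketing $\snd{\CSet_{s}} < \Phi(\beta) < \fst{\CSet_{t}}$ obtained from Lemma~\ref{lem:ImmediatePredSucc} for $\alpha$, and once with the roles of $\alpha$ and $\beta$ exchanged, and then intersects the two resulting trichotomies using the mutual exclusivity of $u <_{n} v$, $u = v$, and $v <_{n} u$. You bypass Lemma~\ref{lem:BetweenImmediateSuccPred} entirely: you first pin down the order relation between the two initial segments (linearity of $<_{n}^{+}$ together with the left-to-right layout of the $\CSet_{u}$ forces the segments, if distinct, to be $<_{n}$-adjacent, on pain of the contradiction $x < \fst{\CSet_{q}} \leq \snd{\CSet_{p}} < x$), and then compute both tails by a direct induction on the recursions defining $\gamma_{\alpha}$ and $\gamma_{\beta}$, using the exact endpoint formulas for $\CSet_{s*1^{j}}$ and $\CSet_{t*0^{j}}$ and the length bound $2^{-L(k)} \leq 3^{-k}$; I checked these computations and the one-sided estimates they feed on, and they are sound. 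The tail computations you carry out explicitly are essentially the ones hidden inside the proof of Lemma~\ref{lem:BetweenImmediateSuccPred} (the step ``by the construction of $\gamma_{\alpha}$ and the fact $\snd{\CSet_{\overline{\gamma_{\alpha}}m}} < \fst{\RInt_{\overline{\alpha}L(m)}}$ we must have $\gamma_{\alpha} = \widebreve{\overline{\gamma_{\alpha}}m}$''). Your version is self-contained modulo Lemma~\ref{lem:ImmediatePredSucc} and avoids the double-application-plus-exclusivity bookkeeping, at the cost of redoing interval arithmetic the paper has already packaged; the paper's version reuses Lemma~\ref{lem:BetweenImmediateSuccPred}, which it needs independently in the proof of Proposition~\ref{prop:UCTcImpDFAN}.
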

\begin{proof}
  Fix $n \in \Nat$. First, assume 
  that $\overline{\gamma_{\alpha}}n$ has an immediate predecessor
  $s$ and an immediate successor $t$. Then
  \[
    \snd{\CSet_{s}} < \fst{\RInt_{\overline{\alpha}L(n)}}
    \leq  \Phi(\alpha) \simeq \Phi(\beta)
    \leq \snd{\RInt_{\overline{\alpha}L(n)}}
    < \fst{\CSet_{t}}
  \]
  by Lemma \ref{lem:ImmediatePredSucc}.
  By the item \ref{lem:BetweenImmediateSuccPred3} of Lemma
  \ref{lem:BetweenImmediateSuccPred} one of the following holds:
  \begin{enumerate}
    \item
    $
    \overline{\gamma_{\beta}}n <_{n} \overline{\gamma_{\alpha}}n$
    and $\gamma_{\beta} =
   \widebreve{\overline{\gamma_{\beta}}n}$;
    \item

    $\overline{\gamma_{\beta}}n = \overline{\gamma_{\alpha}}n$;

    \item
    $
    \overline{\gamma_{\alpha}}n <_{n} \overline{\gamma_{\beta}}n$
    and $\gamma_{\beta} = \widehat{\overline{\gamma_{\beta}}n}$.
  \end{enumerate}
  If $\overline{\gamma_{\alpha}}n$ does not have an immediate predecessor
  or an immediate successor (or both), we obtain the same conclusion 
  using item \ref{lem:BetweenImmediateSuccPred1}
  or item \ref{lem:BetweenImmediateSuccPred2}
  of Lemma \ref{lem:BetweenImmediateSuccPred}
  (or trivially in case $\overline{\gamma_{\alpha}}n$ does not have
  both).

  Exchanging the role of $\alpha$ and $\beta$, we also have one 
  of the following:
  \begin{enumerate}
    \item
    $
    \overline{\gamma_{\alpha}}n <_{n} \overline{\gamma_{\beta}}n$
    and $\gamma_{\alpha} =
   \widebreve{\overline{\gamma_{\alpha}}n}$;
    \item

    $\overline{\gamma_{\alpha}}n = \overline{\gamma_{\beta}}n$;

    \item
    $
    \overline{\gamma_{\beta}}n <_{n} \overline{\gamma_{\alpha}}n$
    and $\gamma_{\alpha} = \widehat{\overline{\gamma_{\alpha}}n}$.
  \end{enumerate}
  Since conditions
$\overline{\gamma_{\alpha}}n < \overline{\gamma_{\beta}}n$,
$\overline{\gamma_{\alpha}}n = \overline{\gamma_{\beta}}n$, and 
$\overline{\gamma_{\beta}}n < \overline{\gamma_{\alpha}}n$
are mutually exclusive, we obtain the desired conclusion.
\end{proof}

\begin{proposition}
  \label{prop:UCTcImpDFAN}
  $\UCTcT$ implies $\DFAN$.
\end{proposition}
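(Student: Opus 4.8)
The plan is to derive $\DFAN$ by coding a decidable bar as a real-valued function on $\UInt$ whose uniform continuity yields a uniform bound for the bar; $\UCTcT$ then forces that function to be uniformly continuous and hence the bar to be uniform. So fix a decidable bar $B \subseteq \BSeq$ and let $B_{0}$ be the detachable set of minimal elements of $B$. The closed intervals $\CSet_{s}$ for $s \in B_{0}$ are pairwise disjoint subintervals of $\UInt$ of length $3^{-\lh{s}}$, separated by deleted ``middle-third'' intervals. I take $f \colon \UInt \to \Real$ to be the \emph{Cantor tent function} of $B$: $0$ off $\bigcup_{s \in B_{0}} \CSet_{s}$, and on each $\CSet_{s}$ the tent of base $\CSet_{s}$ and height $1$ --- the piecewise-linear map vanishing at the two endpoints of $\CSet_{s}$ and equal to $1$ at its midpoint, so of Lipschitz constant $2 \cdot 3^{\lh{s}}$.

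To make this precise constructively, I would first define $f \circ \Phi \colon \TTree \to \Real$ using the binary sequences $\gamma_{\alpha}$ introduced above: since $B$ is a bar, for each $\alpha \in \TTree$ there is a least $n(\alpha)$ with $B(\overline{\gamma_{\alpha}}n(\alpha))$, so that $s(\alpha) \defeql \overline{\gamma_{\alpha}}n(\alpha) \in B_{0}$, and I put $(f \circ \Phi)(\alpha) \defeql \max\{0,\; 1 - 2 \cdot 3^{n(\alpha)}\lvert \Phi(\alpha) - c_{s(\alpha)}\rvert\}$, where $c_{s(\alpha)}$ is the midpoint of $\CSet_{s(\alpha)}$. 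By Lemma~\ref{lem:ImmediatePredSucc}, $\Phi(\alpha)$ lies strictly between $\snd{\CSet_{p}}$ and $\fst{\CSet_{q}}$, where $p,q$ are the immediate predecessor and successor of $s(\alpha)$ in $\Bin^{n(\alpha)}$ (with the obvious modification at the extremes); moreover the two gaps flanking $\CSet_{s(\alpha)}$ are deleted middle-thirds of $\CSet_{w}$ for common ancestors $w$ of $s(\alpha)$ with $p$, resp.\ $q$ (Lemma~\ref{lem:CharacterisationImmediateSucc}), which, since $s(\alpha) \in B_{0}$ forbids any initial segment of such $w$ from lying in $B$, meet no frontier interval of $B$ and so carry $f = 0$. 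Hence the displayed formula does compute $f(\Phi(\alpha))$. That $f \circ \Phi$ descends along the surjection $\Phi$ (Proposition~\ref{prop:Surject}) to a function $f$ on $\UInt$ comes down to the compatibility $\Phi(\alpha) \simeq \Phi(\beta) \imp (f \circ \Phi)(\alpha) \simeq (f \circ \Phi)(\beta)$, for which Corollary~\ref{cor:EqualPathInitialSegment} reduces the only non-trivial case to one in which $\Phi(\alpha) \simeq \Phi(\beta)$ lies in a deleted interval, where both values are $0$.

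The heart of the argument is equipping $f$ with a \emph{continuous} ternary modulus $g \colon \Nat \to \TTree \to \Nat$. For $\alpha \in \TTree$, a point within a suitable distance of $\Phi(\alpha)$ can meet only finitely many tents of $f$: namely those on $\CSet_{s(\alpha)}$ and on the two frontier intervals of $B$ flanking the deleted gaps around it, and the depths $n(\alpha), d_{L}(\alpha), d_{R}(\alpha)$ of these three intervals are computable from $B$ (using that $B$ is a bar and decidable) and are fixed by finitely many digits of $\gamma_{\alpha}$, hence of $\alpha$. I would therefore set $g_{k}(\alpha) \defeql k + 1 + \lceil D(\alpha)\log_{2}3\rceil$ with $D(\alpha) = \max\{n(\alpha), d_{L}(\alpha), d_{R}(\alpha)\}$: then $g_{k}$ is pointwise continuous, and for any $x \in \UInt$ with $\lvert\Phi(\alpha) - x\rvert \leq 2^{-g_{k}(\alpha)}$, the point $x$ stays within an interval on which $f$ is a sum of at most three disjointly supported tents each of Lipschitz constant $\leq 2 \cdot 3^{D(\alpha)}$, whence $\lvert f(\Phi(\alpha)) - f(x)\rvert \leq 2 \cdot 3^{D(\alpha)}\lvert\Phi(\alpha) - x\rvert \leq 2^{-k}$. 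Verifying this --- in particular pinning down where $\Phi(\alpha)$ and $x$ sit relative to $\CSet_{s(\alpha)}$, its endpoints, and the adjacent gaps, and that the flanking intervals are identified correctly across those boundaries --- is exactly the bookkeeping that the combinatorial lemmas of this subsection (Lemmas~\ref{lem:CharacterisationImmediateSucc}, \ref{lem:ImmediatePredSucc}, \ref{lem:BetweenImmediateSuccPred} and Corollaries~\ref{cor:ImmediatePredSucc}, \ref{cor:EqualPathInitialSegment}) are set up to carry out, and I expect it to be the main obstacle.

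Granted a continuous ternary modulus, $f$ is in particular pointwise continuous (by Proposition~\ref{prop:Surject}), so $\UCTcT$ applies and $f$ is uniformly continuous, say with modulus $\omega \colon \Nat \to \Nat$. Instantiating at $k = 1$: for every $s \in B_{0}$ the midpoint and an endpoint of $\CSet_{s}$ lie at distance $3^{-\lh{s}}/2$ while their $f$-values are $1$ and $0$, so uniform continuity forces $3^{-\lh{s}}/2 > 2^{-\omega(1)}$, hence $\lh{s} \leq M$ for an $M$ computable from $\omega(1)$. Then for each $\alpha \in \Cantor$ the least $n$ with $B(\overline{\alpha}n)$ gives $\overline{\alpha}n \in B_{0}$ and so $n \leq M$; thus $\forall\alpha \in \Cantor\,\exists n \leq M\,B(\overline{\alpha}n)$, i.e.\ $B$ is uniform, which is $\DFAN$. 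By comparison with the modulus construction, the well-definedness of $f$ (granted Corollary~\ref{cor:EqualPathInitialSegment}) and this final extraction of a bound for $B$ are routine.
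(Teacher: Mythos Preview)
Your approach is correct and genuinely different from the paper's. Both constructions feed a decidable bar $B$ into a real-valued function on $\UInt$ via the Cantor discontinuum and the same combinatorial apparatus (the sequences $\gamma_\alpha$ and Lemmas~\ref{lem:CharacterisationImmediateSucc}--\ref{lem:BetweenImmediateSuccPred}, Corollary~\ref{cor:EqualPathInitialSegment}), but the functions differ in where the ``non-uniformity'' of $B$ is stored. The paper's $f$ is piecewise linear with $f(\kappa(\alpha))$ equal to the \emph{hitting depth} $N_\alpha$ of $\alpha$ in $B$; thus $f$ is unbounded in value if $B$ is not uniform, and uniform continuity of $f\circ\kappa$ directly bounds the hitting depths. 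Your $f$ is instead $[0,1]$-valued (tent functions of height~$1$), with the non-uniformity encoded in the \emph{slopes} $2\cdot 3^{\lh{s}}$; you then read off the bound from the modulus $\omega(1)$ via a Lipschitz argument. The paper's extraction is marginally more direct (no logarithm needed), and its piecewise-linear interpolation between the values $\xi_s, N_\alpha, \psi_t$ makes the modulus construction slightly more uniform in shape; your tent construction is arguably more geometric and the function itself simpler to describe. In both cases the real work --- well-definedness across different ternary representatives and the continuous ternary modulus --- rests on exactly the case analysis of Corollary~\ref{cor:EqualPathInitialSegment} and the identification of the flanking $B_0$-intervals via $\xi_s$ and $\psi_t$, which you correctly flag as the main obstacle. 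Your sketch of that verification (in particular, that in the non-trivial cases of Corollary~\ref{cor:EqualPathInitialSegment} the common value $\Phi(\alpha)\simeq\Phi(\beta)$ falls in a deleted middle-third, forcing both tent values to~$0$) is right and matches the bookkeeping the paper carries out for its own $f$.
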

\begin{proof}
  Assume $\UCTcT$, and let $B \subseteq \BSeq$ be a decidable bar.
  Without loss of generality, assume $\nil \notin B$; otherwise,  $B$
  is trivially uniform. Before proceeding further, we introduce some
  notations: for $\alpha \in \TTree$ and $s \in \BSeq$, define
  \[
    \begin{aligned}
      N_{\alpha} 
      &\defeql
      \text{the least $n \in \Nat$ such that
        $\overline{\gamma_{\alpha}}n \in B$},\\
      \xi_s
      &\defeql
      \text{the least $n \in \Nat$ such that $\overline{\widebreve{s}}
    n \in B$}, \\
      \psi_s
      &\defeql
      \text{the least $n \in \Nat$ such that $\overline{\widehat{s}} n
    \in B$}. \\
    \end{aligned}
  \]
  For rationals $p,q \in \Rat$ such that  $p \leq q$, define
  \[
    [p,q]_{\Rat}
    \defeql
    \left\{ r \in \Rat \mid p \leq r \leq q \right\}.
  \]

  We construct a function $f \colon \UInt \to \Real$ as follows.  
  First, we define $f_{T} \colon \TTree \allowbreak \to \Real$.
  Fix $\alpha \in \TTree$. Without loss of generality,
  assume that $\overline{\gamma_{\alpha}}N_{\alpha}$ has an immediate
  predecessor $s$ and an immediate successor $t$.
  Define a piecewise linear function $F_{\alpha} \colon [\snd{\CSet_{s}},
  \fst{\CSet_{t}}]_{\Rat} \to \Rat$ as follows (see Figure~\ref{fig:F}):%
  \footnote{
  If $\overline{\gamma_{\alpha}}N_{\alpha}$ does not have an immediate
  predecessor or an immediate successor, then we simply restrict the domain
  of $F_{\alpha}$ to
  $[\fst{\CSet_{\overline{\gamma_{\alpha}}N_{\alpha}}}, \fst{\CSet_{t}}]_{\Rat}$
  or $[\snd{\CSet_{s}},
  \snd{\CSet_{\overline{\gamma_{\alpha}}N_{\alpha}}}]_{\Rat}$ respectively.}
  \begin{figure}[bt]
    \centering
  \begin{tikzpicture}
    \draw [->] (-.5,0) -- (9,0);
    \draw [->] (0,-.5) -- (0,5);
    \draw (1,1) -- (3.5,3) -- (6,3) -- (7.5,4.5);
    \draw [anchor=north] (1,0) node {$\snd{\CSet_{s}}$};
    \draw [dotted] (1,0) -- (1,1);
    \draw [anchor=north] (3.5,0) node
    {$\fst{\CSet_{\overline{\gamma_{\alpha}}N_{\alpha}}}$};
    \draw [dotted] (3.5,0) -- (3.5,3);
    \draw [anchor=north] (6,0) node 
    {$\snd{\CSet_{\overline{\gamma_{\alpha}}N_{\alpha}}}$};
    \draw [dotted] (6,0) -- (6,3);
    \draw [anchor=north] (7.5,0) node {$\fst{\CSet_{t}}$};
    \draw [dotted] (7.5,0) -- (7.5,4.5);
    \draw [anchor=east] (0,1) node {$\xi_s$};
    \draw [dotted] (0,1) -- (1,1);
    \draw [anchor=east] (0,3) node {$N_{\alpha}$};
    \draw [dotted] (0,3) -- (3.5,3);
    \draw [anchor=east] (0,4.5) node {$\psi_t$};
    \draw [dotted] (0,4.5) -- (7.5,4.5);
  \end{tikzpicture}
  \caption{The graph of $F_{\alpha}$} \label{fig:F}
  \end{figure}
  \begin{equation}
    \label{def:F}
    F_{\alpha}(r) \defeql 
    \begin{cases}
      \frac{N_{\alpha} - \xi_s}%
      {\fst{\CSet_{\overline{\gamma_{\alpha}}N_{\alpha}}} - \snd{\CSet_s}}
      (r - \snd{\CSet_s}) + \xi_s
      & \text{if $r \in [\snd{\CSet_{s}},
      \fst{\CSet_{\overline{\gamma_{\alpha}}N_{\alpha}}}]$}, \\[.5em]
      N_{\alpha}
      & \text{if $r \in \CSet_{\overline{\gamma_{\alpha}}N_{\alpha}}$}, \\[.5em]
      \frac{\psi_t  - N_{\alpha}}%
      {\fst{\CSet_{t}} - \snd{\CSet_{\overline{\gamma_{\alpha}}N_{\alpha}}}}
      (r - \snd{\CSet_{\overline{\gamma_{\alpha}}N_{\alpha}}}) +
      N_{\alpha}
      & \text{if $r \in
        [\snd{\CSet_{\overline{\gamma_{\alpha}}N_{\alpha}}},
        \fst{\CSet_{t}}]$}.
      \end{cases}
    \end{equation}
  By Lemma \ref{lem:ImmediatePredSucc}, we have 
  \[
    \snd{\CSet_{s}} 
    < \fst{\RInt_{\overline{\alpha}L(N_{\alpha})}}
    < \snd{\RInt_{\overline{\alpha}L(N_{\alpha})}}
    < 
    \fst{\CSet_{t}},
  \]
   so the sequence $\seq{x^{n}_{\alpha}}_{n \geq L(N_{\alpha})}$ 
   lies in the domain of $F_{\alpha}$. Define
  \[
    f_{T}(\alpha) 
    \defeql \seq{F_{\alpha}(x^{n}_{\alpha})}_{n \geq L(N_{\alpha})}.
  \]
  Since $F_{\alpha}$ is piecewise linear (and thus uniformly continuous), 
  the sequence $\seq{F_{\alpha}(x^{n}_{\alpha})}_{n \geq L(N_{\alpha})}$ is a
  fundamental sequence. 

  Next, we show that 
  \begin{equation}
  \label{eq:welldeffT}
    \Phi(\alpha) \simeq \Phi(\beta) \to f_{T}(\alpha) \simeq
    f_{T}(\beta)
   \end{equation}
  for all $\beta \in \TTree$. Let $\beta \in \TTree$ such that
  $\Phi(\beta) \simeq \Phi(\alpha)$. Assume, without loss of
  generality, that $\overline{\gamma_{\beta}}N_{\beta}$
  has an immediate predecessor $s'$ and an immediate successor
  $t'$. Define a piecewise linear function 
  $F_{\beta} \colon  [\snd{\CSet_{s'}}, \fst{\CSet_{t'}}]_{\Rat} \to \Rat$
  by \eqref{def:F} using  $\beta,s',t'$
  instead of $\alpha, s, t$.
  By Corollary \ref{cor:EqualPathInitialSegment}, one of the following
  holds:
   \begin{enumerate}
     \item\label{prop:UCTcImpDFAN1}
       $\overline{\gamma_{\alpha}}N_{\alpha}<_{N_{\alpha}}
       \overline{\gamma_{\beta}}N_{\alpha}$,
       $\gamma_{\alpha} = \widebreve{\overline{\gamma_{\alpha}}N_{\alpha}}$, and 
       $\gamma_{\beta} = \widehat{\overline{\gamma_{\beta}}N_{\alpha}}$;

     \item\label{prop:UCTcImpDFAN2}
       $\overline{\gamma_{\alpha}}N_{\alpha} =
       \overline{\gamma_{\beta}}N_{\alpha}$;

     \item\label{prop:UCTcImpDFAN3}
       $\overline{\gamma_{\beta}}N_{\alpha} <_{N_{\alpha}}
       \overline{\gamma_{\alpha}}N_{\alpha}$,
       $\gamma_{\beta} = \widebreve{\overline{\gamma_{\beta}}N_{\alpha}}$, and 
       $\gamma_{\alpha} = \widehat{\overline{\gamma_{\alpha}}N_{\alpha}}$.
   \end{enumerate}
   By the same corollary, one of the following holds:
   \begin{enumerate}[label={\arabic*}$'$., ref={\arabic*}$'$]
     \item\label{prop:UCTcImpDFAN1p}
       $\overline{\gamma_{\alpha}}N_{\beta}<_{N_{\beta}}
       \overline{\gamma_{\beta}}N_{\beta}$,
       $\gamma_{\alpha} = \widebreve{\overline{\gamma_{\alpha}}N_{\beta}}$, and 
       $\gamma_{\beta} = \widehat{\overline{\gamma_{\beta}}N_{\beta}}$;

     \item\label{prop:UCTcImpDFAN2p}
       $\overline{\gamma_{\alpha}}N_{\beta} =
       \overline{\gamma_{\beta}}N_{\beta}$;

     \item\label{prop:UCTcImpDFAN3p}
       $\overline{\gamma_{\beta}}N_{\beta} <_{N_{\beta}}
       \overline{\gamma_{\alpha}}N_{\beta}$,
       $\gamma_{\beta} = \widebreve{\overline{\gamma_{\beta}}N_{\beta}}$, and 
       $\gamma_{\alpha} = \widehat{\overline{\gamma_{\alpha}}N_{\beta}}$.
   \end{enumerate}
   The only possible combinations are \ref{prop:UCTcImpDFAN1}
   and \ref{prop:UCTcImpDFAN1p}; \ref{prop:UCTcImpDFAN2} and
   \ref{prop:UCTcImpDFAN2p};  and 
  \ref{prop:UCTcImpDFAN3} and \ref{prop:UCTcImpDFAN3p}.
   In the case \ref{prop:UCTcImpDFAN1} and \ref{prop:UCTcImpDFAN1p}, 
   we have
   $\overline{\gamma_{\beta}}N_{\alpha} = t$
   and so 
   $
   \overline{\widehat{t}}\psi_t
   = \overline{\gamma_{\beta}}\psi_t
   = \overline{\gamma_{\beta}}N_{\beta}
   $
   by the definitions of $\psi_t$ and $N_{\beta}$.
   Similarly, we have $s' = \overline{\gamma_{\alpha}}N_{\beta}$, and so
   $\overline{\widebreve{s'}}\xi_{s'}
   = \overline{\gamma_{\alpha}}\xi_{s'}
   = \overline{\gamma_{\alpha}}N_{\alpha}$.
   Since $s' <_{N_{\beta}} \overline{\gamma_{\beta}}N_{\beta}$,
   there exists $u \in
  \BSeq$ and $m \in \Nat$ such that $s' = u * \seq{0} * 1^{m}$
  and $\overline{\gamma_{\beta}}N_{\beta} = u * \seq{1} *
  0^{m}$. By the definition of $N_{\beta}$, we must have
  $\lh{u} < \xi_{s'}$. Then, 
  $\snd{\CSet_{s'}}
  = \snd{\CSet_{u*\seq{0}}}
  = \snd{\CSet_{\overline{\widebreve{u*\seq{0}}}\xi_{s'}}}
  = \snd{\CSet_{\overline{\widebreve{s'}}\xi_{s'}}}
  = \snd{\CSet_{\overline{\gamma_{\alpha}}N_{\alpha}}}
  $. Similarly, we have $\fst{\CSet_{t}} =
  \fst{\CSet_{\overline{\gamma_{\beta}}N_{\beta}}}$.
  Thus, the functions $F_{\alpha}$ and $F_{\beta}$
  agree on the interval
  $[
    \snd{\CSet_{\overline{\gamma_{\alpha}}N_{\alpha}}},
    \fst{\CSet_{\overline{\gamma_{\beta}}N_{\beta}}}
  ]$.
  Since 
  $\snd{\RInt_{\overline{\alpha}L(N_{\alpha})}}
  < \fst{\CSet_{t}}
  = \fst{\CSet_{\overline{\gamma_{\beta}}N_{\beta}}}$,
  $
   \snd{\CSet_{\overline{\gamma_{\alpha}}N_{\alpha}}}
  = \snd{\CSet_{s'}}
  < \fst{\RInt_{\overline{\beta}L(N_{\beta})}}$
  and $\Phi(\alpha) \simeq \Phi(\beta)$,
  for sufficiently large $N_{\alpha,\beta} \geq \max \left\{ L(N_{\alpha}),
  L(N_{\beta}) \right\}$ both
  $\RInt_{\overline{\alpha}N_{\alpha. \beta}}$ 
  and $\RInt_{\overline{\beta}N_{\alpha. \beta}}$
  lie in the interval 
  $[\snd{\CSet_{\overline{\gamma_{\alpha}}N_{\alpha}}},
  \fst{\CSet_{\overline{\gamma_{\beta}}N_{\beta}}}]$.
   Since $F_{\alpha}$ and $F_{\beta}$ are
  uniformly continuous (and hence preserve equality on regular
  sequences), we have
  \[
    \begin{aligned}
      \seq{F_{\alpha}(x^{n}_{\alpha})}_{n \geq L(N_{\alpha})}
      &\simeq
      \seq{F_{\alpha}(x^{n}_{\alpha})}_{n \geq N_{\alpha,\beta}} \\
      &\simeq
      \seq{F_{\alpha}(x^{n}_{\beta})}_{n \geq N_{\alpha,\beta}} \\
      &=
      \seq{F_{\beta}(x^{n}_{\beta})}_{n \geq N_{\alpha,\beta}}
      \simeq
      \seq{F_{\beta}(x^{n}_{\beta})}_{n \geq L(N_{\beta})}.
    \end{aligned}
  \]
   Thus $f_{T}(\alpha) \simeq f_{T}(\beta)$.
   In the case \ref{prop:UCTcImpDFAN2} and \ref{prop:UCTcImpDFAN2p},
   we must have $N_{\alpha} = N_{\beta}$. Then 
   $F_{\alpha}$ and $F_{\beta}$ agree, and so $f_{T}(\alpha) \simeq
   f_{T}(\beta)$.
   The case \ref{prop:UCTcImpDFAN3} and \ref{prop:UCTcImpDFAN3p} is symmetric to
   the first case.
   Therefore $f_{T}(\alpha) \simeq f_{T}(\beta)$.

   For an arbitrary regular sequence $x$ in $\UInt$, define
   \[
     f(x) \defeql f_{T}(\alpha_{x}),
   \]
   where $\alpha_{x} \in \TTree$ is the path determined by
   \eqref{def:FromRegularToPath}.
   Then, for any $x,y
   \in \UInt$ such that $x \simeq y$, we have $\Phi(\alpha_{x}) \simeq x \simeq y
   \simeq \Phi(\alpha_{y})$ by Proposition \ref{prop:Surject}. Then
   by \eqref{eq:welldeffT}, we have
   \[
     f(x) = f_{T}(\alpha_{x})\simeq f_{T}(\alpha_{y}) = f(y).
   \]
    Thus $f$ is a function
   from $\UInt$ and $\Real$.

  Next, we define a ternary modulus $g \colon \Nat \to \TTree \to \Nat$ of
  $f$ as follows. Fix $k \in \Nat$ and $\alpha \in \TTree$. Without loss of
  generality, assume that $\overline{\gamma_{\alpha}}N_{\alpha}$
  has an immediate predecessor $s$ and an immediate successor
  $t$.
  Let $N$ be the least $n \in \Nat$ such that 
  $\snd{\CSet_{s}}
  < \fst{\RInt_{\overline{\alpha}L(N_{\alpha})}} - 2^{-n}$
  and 
  $\snd{\RInt_{\overline{\alpha}L(N_{\alpha})}} + 2^{-n}
  < \fst{\CSet_{t}}$.
  Put
  \[
    g_{k}(\alpha) \defeql \max \left\{ N, \omega(k) + 1 \right\},
  \]
  where $\omega$ is a modulus of uniform continuity of the function
  $F_{\alpha}$ defined by \eqref{def:F}.
  Note that $g_{k} \colon \TTree \to \Nat$
  is continuous because the construction of $g_{k}(\alpha)$ depends only
  on the initial segment of $\alpha$ up to length $L(N_{\alpha})$.

  We show that $g$ is a ternary modulus of $f$. Fix $k \in \Nat$ and
  $\alpha \in \TTree$. Without loss of generality, assume
  that $\overline{\gamma_{\alpha}}N_{\alpha}$ has an immediate
  predecessor $s$ and an immediate successor $t$. 
  Let $x \in \UInt$ be such that
  $|\Phi(\alpha) - x| \leq 2^{-g_{k}(\alpha)}$.
  We may assume that $x = \Phi(\beta)$ for some $\beta \in \TTree$.
  Since 
  $\fst{\RInt_{\overline{\alpha}L(N_{\alpha})}}
  \leq 
  \snd{\Phi(\alpha) \leq \RInt_{\overline{\alpha}L(N_{\alpha})}}$,
  we have $\snd{\CSet_{s}} < \Phi(\beta) < \fst{\CSet_{t}}$.
  By Lemma \ref{lem:BetweenImmediateSuccPred}, one of the following
  holds:
  \begin{enumerate}
    \item \label{prop:UCTcImpDFANg1}
    $
    \overline{\gamma_{\beta}}N_{\alpha} = s$ and $\gamma_{\beta} =
   \widebreve{\overline{\gamma_{\beta}}N_{\alpha}}$;
    \item \label{prop:UCTcImpDFANg2}
    $\overline{\gamma_{\beta}}N_{\alpha} 
    = \overline{\gamma_{\alpha}}N_{\alpha}$;

    \item \label{prop:UCTcImpDFANg3}
    $
    \overline{\gamma_{\beta}}N_{\alpha} = t$ and $\gamma_{\beta} =
   \widehat{\overline{\gamma_{\beta}}N_{\alpha}}$.
  \end{enumerate}
  In the case \ref{prop:UCTcImpDFANg1}, we have
  $\overline{\gamma_{\beta}}N_{\beta} = \overline{\breve{s}}\, \xi_s$. On the other hand, since $s <_{N_{\alpha}}
  \overline{\gamma_{\alpha}}N_{\alpha}$, there exists $u \in
  \BSeq$ and $m \in \Nat$ such that $s = u * \seq{0} * 1^{m}$
  and $\overline{\gamma_{\alpha}}N_{\alpha} = u * \seq{1} *
  0^{m}$. By the definition of $N_{\alpha}$, we must have
  $\lh{u} < N_{\beta}$. Then, $t' = \overline{\widehat{u * \seq{1}}}N_{\beta}$ is  an immediate successor of
  $\overline{\gamma_{\beta}}N_{\beta}$ and $\overline{\widehat{t'}} \psi_{t'} = \overline{\gamma_{\alpha}}N_{\alpha}$.
  Thus, the functions $F_{\alpha}$ and $F_{\beta}$ determined
  by $\alpha$ and $\beta$ as in \eqref{def:F} agree on the 
  interval 
  $[
    \snd{\CSet_{\overline{\gamma_{\beta}}N_{\beta}}},
    \fst{\CSet_{\overline{\gamma_{\alpha}}N_{\alpha}}}
  ]$.
  Since $\snd{\CSet_{\overline{\gamma_{\beta}}N_{\beta}}}
  = \snd{\CSet_{s}} < \Phi(\beta)$ and
  $\snd{\RInt_{\overline{\beta}L(N_{\beta})}}
  < \fst{\CSet_{t'}}
  = \fst{\CSet_{\overline{\gamma_{\alpha}}N_{\alpha}}}$, 
  the term $x_{\beta}^{n}$ lies in 
  $[
    \snd{\CSet_{\overline{\gamma_{\beta}}N_{\beta}}},
    \fst{\CSet_{\overline{\gamma_{\alpha}}N_{\alpha}}}
  ]$
  for sufficiently large $n \geq M_1$ for some
  $M_{1} \in \Nat$. Since 
    $
    | \Phi(\alpha) - \Phi(\beta) | \leq 2^{-(\omega(k) +1)},
    $
   we have
  \[
    \forall m \in \Nat
    \left(  |x_{\alpha}^{M_{2} + m} - x_{\beta}^{M_{2} + m}| 
    \leq 2^{-\omega(k)}\right),
  \]
  where $M_{2} \defeql \max \{ M_{1}, \omega(k) + 1\}$.
  Thus 
    $
      |F_{\alpha}(x^{m}_{\alpha}) - F_{\beta}(x^{m}_{\beta})| 
    \leq 2^{-k} 
    $
  for all $m \geq M_{2}$, which implies
  \[
    |\seq{F_{\alpha}(x_{\alpha}^{n})}_{n \geq L(N_{\alpha})}
    - \seq{F_{\beta}(x_{\beta}^{n})}_{n \geq L(N_{\beta})} |
    \leq 2^{-k},
  \]
  that is, $|f(\Phi(\alpha)) - f(\Phi(\beta)) | \leq
  2^{-k}$.
  The cases \ref{prop:UCTcImpDFANg2} and \ref{prop:UCTcImpDFANg3}
  are treated similarly.  Thus, $g$ is a modulus of $f$.
  
  By $\UCTcT$, $f$ is uniformly continuous. Then the composition
  $f \circ \kappa \colon \Cantor \to \Real$
  of $f$ with $\kappa \colon
  \Cantor \to \UInt$ given by \eqref{def:kappa} is uniformly
  continuous as well.
  We show that
  \[
    f(\kappa(\alpha))  = \text{the least $n \in \Nat$ such that
      $\overline{\alpha}n \in B$}
  \]
  for all $\alpha \in \Cantor$.
  Fix $\alpha \in \Cantor$
  and choose $\beta \in \TTree$ such that $\kappa(\alpha) \simeq
  \Phi(\beta)$ (cf.\ Proposition \ref{prop:Surject}). Then,  $\alpha = \gamma_{\beta}$ by Proposition
  \ref{prop:CantorDiscontinuum}, so
  it suffices to show $f(\Phi(\beta)) \simeq N_{\beta}$.
  Without loss of generality, assume that
  $\overline{\gamma_{\beta}}N_{\beta}$ has an immediate predecessor
  $s$ and an immediate successor $t$.
  Since $\alpha = \gamma_{\beta}$
  and $\snd{\CSet_{s}} <
  \fst{\RInt_{\overline{\beta}L(N_{\beta})}}< 
  \snd{\RInt_{\overline{\beta}L(N_{\beta})}} < \fst{\CSet_{t}}$
  by Lemma \ref{lem:ImmediatePredSucc},
  the sequences
  $\seq{\kappa(\alpha)(n)}_{n \geq
    L(N_{\beta})}$ 
  and $\seq{x^{n}_{\beta}}_{n \geq
  L(N_{\beta})}$ lie in the domain of the function $F_{\beta}$ defined as in \eqref{def:F}.
  Since $\seq{\kappa(\alpha)(n)}_{n \in \Nat}
  \simeq \seq{x^{n}_{\beta}}_{n \in \Nat}$
  and $\seq{\kappa(\alpha)(n)}_{n \geq L(N_{\beta})}$
  lies in $\CSet_{\overline{\gamma_{\beta}}N_{\beta}}$,
  we have 
  \[
    f(\Phi(\beta)) \simeq  \seq{F_{\beta}(x^{n}_{\beta})}_{n \geq
      L(N_{\beta})} \simeq \seq{F_{\beta}(\kappa(\alpha)(n))}_{n \geq
        L(N_{\beta})} = N_{\beta},
  \]
  as required.
  Since $f \circ \kappa$ is uniformly continuous, there exists
  $M_{3} \in \Nat$ such that 
  \[
    \forall \alpha, \beta \in \Cantor \left(
    \overline{\alpha}M_{3} = \overline{\beta} M_{3}
    \imp f(\kappa(\alpha)) =  f(\kappa(\beta))\right).
  \]
  Put $M \defeql \max \left\{ f(\kappa(\widehat{s})) \mid s \in
  \Bin^{M_{3}}\right\}$. For any $\alpha \in \Cantor$, 
  we have $f(\kappa(\alpha)) =
  f(\kappa(\widehat{\overline{\alpha}M_{3}})) \leq M$.
  Therefore $B$ is uniform.
\end{proof}
\begin{theorem}
  \label{thm:EquivUCTcDFAN}
  $\UCTc$ and $\DFAN$ are equivalent.
\end{theorem}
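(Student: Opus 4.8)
The plan is to obtain the equivalence purely by assembling the three results already proved in this section, so no new construction is needed. First I would invoke Theorem~\ref{cor:EqiuvUCTcUTCcT}, which identifies $\UCTc$ with its ternary-modulus reformulation $\UCTcT$; this reduces everything to relating $\UCTcT$ and $\DFAN$, and it is for $\UCTcT$ that both implications have been established separately.

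For the direction $\DFAN \imp \UCTc$, I would chain Proposition~\ref{prop:DFANimpliesUContMod} with Theorem~\ref{cor:EqiuvUCTcUTCcT}: given a continuous $f \colon \UInt \to \Real$ with a continuous modulus, Proposition~\ref{prop:EquivModTernMod} produces a continuous ternary modulus of $f$, whence $\UCTcT$ — which follows from $\DFAN$ by Proposition~\ref{prop:DFANimpliesUContMod} — yields uniform continuity of $f$. For the converse $\UCTc \imp \DFAN$, I would chain the other half of Theorem~\ref{cor:EqiuvUCTcUTCcT} ($\UCTc$ implies $\UCTcT$) with Proposition~\ref{prop:UCTcImpDFAN} ($\UCTcT$ implies $\DFAN$). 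Thus $\DFAN \imp \UCTcT \iff \UCTc$ and $\UCTc \iff \UCTcT \imp \DFAN$, which is the claimed equivalence.

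The main obstacle is not in this final assembly at all; it lies entirely in Proposition~\ref{prop:UCTcImpDFAN}, where, from a decidable bar $B \subseteq \BSeq$ of the binary fan, one must build a genuine function $f \colon \UInt \to \Real$ together with a \emph{continuous} ternary modulus, in such a way that uniform continuity of $f$ forces uniformity of $B$. The delicate points there are the well-definedness of $f$ on $\UInt$ (gluing the piecewise-linear profiles $F_\alpha$ consistently across paths representing the same real, which relies on the quotient property of $\Phi$ and on Corollary~\ref{cor:EqualPathInitialSegment}), the verification that the assignment $g_k$ really is a ternary modulus (case analysis via Lemma~\ref{lem:BetweenImmediateSuccPred}), and the final identification of $f \circ \kappa$ on the Cantor discontinuum with the ``least bar length'' function via Proposition~\ref{prop:CantorDiscontinuum}. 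Once that machinery is granted, the present theorem is an immediate corollary, and the proof I would write consists of exactly the two short chains above.

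\begin{proof}
  Immediate from Theorem~\ref{cor:EqiuvUCTcUTCcT},
  Proposition~\ref{prop:DFANimpliesUContMod}, and
  Proposition~\ref{prop:UCTcImpDFAN}: by
  Theorem~\ref{cor:EqiuvUCTcUTCcT}, $\UCTc$ is equivalent to
  $\UCTcT$; by Proposition~\ref{prop:DFANimpliesUContMod}, $\DFAN$
  implies $\UCTcT$; and by Proposition~\ref{prop:UCTcImpDFAN},
  $\UCTcT$ implies $\DFAN$. Hence $\UCTc$ and $\DFAN$ are equivalent.
\end{proof}
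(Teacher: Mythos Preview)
Your proof is correct and matches the paper's own proof essentially verbatim: the paper also derives the theorem immediately from Proposition~\ref{prop:DFANimpliesUContMod}, Proposition~\ref{prop:UCTcImpDFAN}, and Theorem~\ref{cor:EqiuvUCTcUTCcT}. Your additional discussion of where the real work lies (in Proposition~\ref{prop:UCTcImpDFAN}) is accurate but goes beyond what the paper records at this point.
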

\begin{proof}
  By Proposition \ref{prop:DFANimpliesUContMod}, Proposition
  \ref{prop:UCTcImpDFAN}, and Theorem \ref{cor:EqiuvUCTcUTCcT}.
\end{proof}

We summarise the equivalents of the decidable fan theorem.
\begin{theorem}\label{thm:EquivDFT}
  The principles
   $\DFAN$, $\UCc$, $\UCTcC$, and $\UCTc$
  are pairwise equivalent.
\end{theorem}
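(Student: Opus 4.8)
The plan is to assemble this theorem directly from the equivalences already established in the preceding sections; no new argument is needed, since all the substantive work has been done. First I would invoke Proposition~\ref{eq:EquivalenceDFANandDFANTernary}, which already records that $\DFAN$, $\UCc$, and $\UCTcC$ are pairwise equivalent; these rest on Berger's equivalence $\DFAN \equiv \UCc$ and on Proposition~\ref{prop:ContMod}, which establishes $\UCc \equiv \UCTcC$ by promoting a continuous modulus to one that is a modulus of itself (Lemma~\ref{lem:ContModItself}). It therefore remains only to splice $\UCTc$ into this chain.

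Next I would invoke Theorem~\ref{thm:EquivUCTcDFAN}, which states $\UCTc \equiv \DFAN$. Composing the two equivalences — $\DFAN \equiv \UCc \equiv \UCTcC$ on one side and $\DFAN \equiv \UCTc$ on the other — yields that all four principles are pairwise equivalent, which is the assertion of the theorem.

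For completeness I would also recall where the genuinely new content behind $\UCTc \equiv \DFAN$ resides, namely in Proposition~\ref{prop:DFANimpliesUContMod} and Proposition~\ref{prop:UCTcImpDFAN}, together with the bridge Theorem~\ref{cor:EqiuvUCTcUTCcT} between $\UCTc$ and its ternary-modulus variant $\UCTcT$. The easy direction $\DFAN \Rightarrow \UCTcT$ goes by replacing a given continuous ternary modulus $g$ by the pointwise-least one $G_k(\alpha) \defeql$ ``the least $n$ with $g_{k+1}(\widehat{\overline{\alpha}n}) < n$'', which is a modulus of itself and hence uniformly continuous by $\DFAN_{\TSeq}$, and then applying Lemma~\ref{lem:UCMod}. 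The hard direction $\UCTcT \Rightarrow \DFAN$ is where the main obstacle lies: from a decidable bar $B \subseteq \BSeq$ one must build $f \colon \UInt \to \Real$ together with a continuous ternary modulus so that uniform continuity of $f$ reads off a uniform bound on $B$; this uses the Cantor discontinuum $\kappa$, the piecewise-linear ``tent'' functions $F_{\alpha}$ of \eqref{def:F}, and the quotient property of $\Phi$ (Proposition~\ref{prop:Quotient}), with the delicate points being well-definedness of $f$ across distinct ternary codes of the same real (via Corollary~\ref{cor:EqualPathInitialSegment}) and continuity of the constructed modulus.
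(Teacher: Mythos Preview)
Your proposal is correct and follows essentially the same route as the paper: the theorem is a summary statement, and the paper likewise just assembles the already-proved equivalences, citing Berger's $\DFAN \leftrightarrow \UCc$, Proposition~\ref{prop:ContMod} for $\UCc \leftrightarrow \UCTcC$, and Theorem~\ref{thm:EquivUCTcDFAN} for $\DFAN \leftrightarrow \UCTc$. Your extra commentary on where the substantive work lies is accurate and matches the paper's own remarks following the theorem.
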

The equivalence of $\DFAN$ and $\UCc$ is due to
Berger~\cite[Proposition~4]{BergerFANandUC}. We have
established the other equivalence by showing $\UCc\leftrightarrow
\UCTcC$ (Proposition \ref{prop:ContMod}) and $\DFAN \leftrightarrow
\UCTc$ (Theorem \ref{thm:EquivUCTcDFAN}).
As our proof shows, it is not hard to show that $\DFAN$
implies the rest of the items in Theorem \ref{thm:EquivDFT}.
Among $\UCc$, $\UCTcC$, and $\UCTc$, the principle $\UCTcC$ seems to
be most general. Indeed, $\UCc$ immediately follows from $\UCTcC$.
Moreover, it is straightforward to show that $\UCTcC$ implies
$\UCTc$ using Theorem \ref{prop:UniversalQuotient}. Thus, the gist of our proof consists in
showing $\UCc \to \UCTcC$ and $\UCTc \to \DFAN$.

\section{Codes of continuous functions}\label{sec:Code}
In this section, we clarify the relation between type one continuous
functions described in Loeb \cite{Loeb05} and real-valued functions on
the unit interval which have continuous moduli.

Throughout this section, we assume that real numbers are represented 
by regular sequences.
For $x \in \Real$ and $k \in \Nat$, let $x_{k}$ denote the $k$-th term
of $x$.  We write $F \colon \LoebReal \to \Real$  for the bijection
between the set of shrinking sequences and the set of regular
sequences and $G \colon \Real \to \LoebReal$ for the inverse of $F$
(see Proposition~\ref{prop:OrderBijectLoebReal}).  Recall from
\eqref{def:RatInt} that $\RatInt$ denotes the set of (pairs of
end-points of) rational
intervals. In the following, we identify $\RatInt$ with a subset of
$\Nat$ via a fixed coding of rational numbers and the pairing
function.

\begin{definition}[{Loeb~\cite[Definition 3.1]{Loeb05}}]
  \label{def:CodeCont}
  A function $\varphi \Ter^{*} \to \Nat$ is a \emph{code of a
  continuous function} if
  \begin{enumerate}[({C}1)]
    \item\label{def:CodeCont1} $\forall s \in \Ter^{*} \left( \varphi(s) \neq 0 \imp
      \varphi(s) \dotminus 1  \in \RatInt \right)$,

    \item\label{def:CodeCont2} $\forall k \in \Nat \forall \alpha \in \TTree \exists n \in
      \Nat \left( \varphi(\overline{\alpha}n) \neq 0 \wedge
      \lh{\varphi(\overline{\alpha}n) \dotminus 1} \leq 2^{-k} \right)$,

    \item\label{def:CodeCont3} $\forall s \in \Ter^{*} \forall i \in \Ter 
      \left(\varphi(s) \neq 0 \imp \right. \\
      \phantom{\forall s \in \Ter^{*} \forall i \in \Ter \:}
      \left.\varphi(s * \seq{i}) \neq 0 \wedge
      \varphi(s*\seq{i}) \dotminus 1 \sqsubseteq \varphi(s) \dotminus
      1\right)$,

    \item\label{def:CodeCont4} $\forall s, t \in \Ter^{*} \left
      ( \varphi(s) \neq 0 \wedge \varphi(t) \neq 0 \wedge \RInt_s
      \approx \RInt_t \to \varphi(s) \dotminus 1 \approx \varphi(t)
      \dotminus 1\right)$.
  \end{enumerate}
\end{definition}
\begin{remark}
  Loeb~\cite{Loeb05} calls a code of a continuous function by
  \emph{continuous function}. Here, we introduce an alternative
  terminology in order to avoid any possible confusion with the usual
  notion of (pointwise) continuity for real-valued functions.
  In what follows, we call a code of a continuous function
  simply by \emph{code}.
\end{remark}

Given a code $\varphi \colon \Ter^{*} \to \Nat$, define $f^{\varphi}_{T} \colon
\TTree \to \Nat \to \RatInt$ by 
\begin{gather}
  \notag
  f^{\varphi}_{T}(\alpha) \defeql \seq{\varphi(\overline{\alpha}h_{n}(\alpha))
\dotminus 1}_{n \in \Nat},\\
\shortintertext{where}
\label{eq:hn}
h_{k}(\alpha) 
\defeql
\text{the least $n \in \Nat$ such that $\varphi(\overline{\alpha}n)
\neq 0 \wedge\lh{\varphi(\overline{\alpha}n) \dotminus 1} \leq
2^{-k}$}.
\end{gather}
Note that $h_{k}(\alpha)$ exists by the property
\ref{def:CodeCont2} of $\varphi$.
\begin{lemma}
  \label{lem:Realvalued}
  For each $\alpha,\beta \in \TTree$, 
  \begin{enumerate}
    \item\label{lem:Realvalued1} $f^{\varphi}_{T}(\alpha) \in \LoebReal$, 
    \item\label{lem:Realvalued2} $\Phi(\alpha) \simeq \Phi(\beta) \imp f^{\varphi}_{T}(\alpha) \simeq
      f^{\varphi}_{T}(\beta)$.
  \end{enumerate}
\end{lemma}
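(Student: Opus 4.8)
The plan is to read off \ref{lem:Realvalued1} directly from conditions \ref{def:CodeCont1}--\ref{def:CodeCont3} on $\varphi$, and to obtain \ref{lem:Realvalued2} from \ref{def:CodeCont4} together with the observation that $\Phi(\alpha)$ always lies in the rational interval $\RInt_{\overline{\alpha}n}$. Before doing either, I would record two preliminary facts. First, the sequence $n \mapsto h_{n}(\alpha)$ defined in \eqref{eq:hn} is nondecreasing: if $m = h_{n+1}(\alpha)$, then $\varphi(\overline{\alpha}m) \neq 0$ and $\lh{\varphi(\overline{\alpha}m) \dotminus 1} \leq 2^{-(n+1)} \leq 2^{-n}$, so $m$ is one of the numbers of which $h_{n}(\alpha)$ is the least, whence $h_{n}(\alpha) \leq m$. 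Second, iterating \ref{def:CodeCont3} along a chain and using transitivity of $\sqsubseteq$ gives: if $\varphi(s) \neq 0$ and $s \preccurlyeq t$, then $\varphi(t) \neq 0$ and $\varphi(t) \dotminus 1 \sqsubseteq \varphi(s) \dotminus 1$.

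For \ref{lem:Realvalued1}, write $\RInt_{n} \defeql \varphi(\overline{\alpha}h_{n}(\alpha)) \dotminus 1$; by \ref{def:CodeCont1} and the definition of $h_{n}$, each $\RInt_{n}$ is a genuine element of $\RatInt$. Condition (S1) of Definition~\ref{def:LoebReal} follows by applying the second preliminary fact to $\overline{\alpha}h_{n}(\alpha) \preccurlyeq \overline{\alpha}h_{n+1}(\alpha)$ (which is a prefix relation by the first fact) and $\varphi(\overline{\alpha}h_{n}(\alpha)) \neq 0$. Condition (S2) holds by taking $n = k$, since $\lh{\RInt_{k}} \leq 2^{-k}$ is exactly what the definition of $h_{k}(\alpha)$ asserts.

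For \ref{lem:Realvalued2}, the geometric input is that $\fst{\RInt_{\overline{\alpha}n}} \leq \Phi(\alpha) \leq \snd{\RInt_{\overline{\alpha}n}}$ for every $\alpha \in \TTree$ and $n \in \Nat$: by \eqref{eq:AssignmentInterval} the interval $\RInt_{\overline{\alpha}n}$ has centre $x^{n}_{\alpha}$ and radius $2^{-(n+1)}$, so this is precisely Lemma~\ref{lem:RepresentationApprox}. Assuming $\Phi(\alpha) \simeq \Phi(\beta)$ and fixing $n$, I set $s = \overline{\alpha}h_{n}(\alpha)$ and $t = \overline{\beta}h_{n}(\beta)$ and chain these inequalities through $\Phi(\alpha) \simeq \Phi(\beta)$ to obtain $\fst{\RInt_{t}} \leq \snd{\RInt_{s}}$ and $\fst{\RInt_{s}} \leq \snd{\RInt_{t}}$ as inequalities in $\Real$; since all four endpoints are rational and the embedding of $\Rat$ into $\Real$ is an order embedding, the same inequalities hold in $\Rat$, i.e. $\RInt_{s} \approx \RInt_{t}$. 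Because $\varphi(s) \neq 0$ and $\varphi(t) \neq 0$, condition \ref{def:CodeCont4} now yields $\varphi(s) \dotminus 1 \approx \varphi(t) \dotminus 1$, i.e. the $n$-th terms of $f^{\varphi}_{T}(\alpha)$ and $f^{\varphi}_{T}(\beta)$ are $\approx$-related; as $n$ was arbitrary, $f^{\varphi}_{T}(\alpha) \simeq f^{\varphi}_{T}(\beta)$ by \eqref{def:LoebRealEqual} (using part \ref{lem:Realvalued1} so that both sides are in $\LoebReal$).

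I do not expect any real obstacle here: the proof is essentially an unwinding of the definitions together with the already established properties of the ternary representation. The two points that need a moment's care are the monotonicity of $h$ (which is exactly what makes the nesting condition (S1) go through via \ref{def:CodeCont3}) and the passage between inequalities in $\Real$ and in $\Rat$ when verifying $\RInt_{s} \approx \RInt_{t}$, which rests on the fact that the rational-to-real embedding reflects the order.
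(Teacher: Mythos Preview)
Your proposal is correct and follows essentially the same route as the paper: monotonicity of $h$ plus \ref{def:CodeCont3} for (S1), the defining clause of $h_k$ for (S2), and $\RInt_{\overline{\alpha}n} \approx \RInt_{\overline{\beta}m}$ (which you justify via Lemma~\ref{lem:RepresentationApprox}) feeding into \ref{def:CodeCont4} for part~\ref{lem:Realvalued2}. You spell out a few points the paper leaves implicit---the monotonicity of $h$, the iteration of \ref{def:CodeCont3}, and the passage from real to rational inequalities---but the underlying argument is the same.
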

\begin{proof}
  \ref{lem:Realvalued1}. We must check \ref{def:LoebRea1}
  and \ref{def:LoebRea2} (cf.\ Definition~\ref{def:LoebReal}). 
  For \ref{def:LoebRea1}, by the leastness of $h_{n}(\alpha)$, we have
  $h_{n}(\alpha) \leq h_{n+1}(\alpha)$.  Thus 
  $\varphi(\overline{\alpha}h_{n+1}(\alpha)) \dotminus 1 
  \sqsubseteq \varphi(\overline{\alpha}h_{n}(\alpha)) \dotminus 1$ by
  \ref{def:CodeCont3}. The property \ref{def:LoebRea2} follows from
  \ref{def:CodeCont2}.
  \smallskip

  \noindent\ref{lem:Realvalued2}. 
  Suppose that $\Phi(\alpha) \simeq \Phi(\beta)$. Then
  $\RInt_{\overline{\alpha}n} \approx \RInt_{\overline{\beta}m}$ for
  all $n,m \in \Nat$, so
  $\varphi(\overline{\alpha}h_{n}(\alpha)) \dotminus 1 
  \approx \varphi(\overline{\beta}h_{n}(\beta)) \dotminus 1$ 
  for all $n \in \Nat$ by \ref{def:CodeCont4}.
  Thus
  $f^{\varphi}_{T}(\alpha) \simeq f^{\varphi}_{T}(\beta)$.
\end{proof}
Define a function $f_{\varphi} \colon \UInt \to \Real$ by 
\begin{equation}
  \label{eq:PhiToFunc}
  f_{\varphi}(x) \defeql F(f^{\varphi}_{T}(\alpha_{x})),
\end{equation}
where $\alpha_{x}$ is the path determined by $x$ as in
\eqref{def:FromRegularToPath}.  Since $x \simeq \Phi(\alpha_{x})$ by
Proposition \ref{prop:Surject}, $f_{\varphi}$ preserves the equality
of $\Real$ by Lemma \ref{lem:Realvalued}. Hence $f_{\varphi}$ is a function from $\UInt$ to $\Real$.

\begin{definition}
  A function $f \colon \UInt \to \Real$ is said to be \emph{induced}
  by a code $\varphi \colon \Ter^{*} \to \Nat$ if $\forall x \in \UInt
  \left( f(x) \simeq f_{\varphi}(x) \right)$.
\end{definition}
Note that a function $f \colon \UInt \to \Real$ is induced by a code
$\varphi \colon \Ter^{*} \to \Nat$  if and only if
\[
  \forall \alpha \in \TTree \left( G(f(\Phi(\alpha))) \simeq
  f^{\varphi}_{T}(\alpha) \right).
\]

\begin{lemma}
  \label{lem:CodeToContMod}
  If $f \colon \UInt \to \Real$ is induced by a code, then $f$
  has a continuous modulus.
\end{lemma}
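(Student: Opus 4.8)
The plan is to invoke Proposition~\ref{prop:EquivModTernMod} and instead exhibit a \emph{continuous ternary modulus} of $f$. Fix a code $\varphi \colon \Ter^{*} \to \Nat$ inducing $f$, so that $f(\Phi(\alpha)) \simeq F(f^{\varphi}_{T}(\alpha))$ for every $\alpha \in \TTree$, and recall the function $h_{k}$ from~\eqref{eq:hn}. For $k \in \Nat$ and $\alpha \in \TTree$ I would put
\[
  g_{k}(\alpha) \defeql h_{k}(\rho(\alpha)) + 6,
\]
where $\rho \colon \TTree \to \TTree$ is the map from~\eqref{def:rho}. That each $g_{k}$ is pointwise continuous on $\TTree$ is routine: $\rho$ is continuous since $\rho(\alpha)(n)$ depends only on $\overline{\alpha}(n+3)$ by~\eqref{def:rho}, and $h_{k} \colon \TTree \to \Nat$ is pointwise continuous because it is an unbounded search over a decidable predicate that at stage $j$ inspects only $\overline{\beta}j$; hence the composite $g_{k} = (h_{k} \circ \rho) + 6$ is pointwise continuous.

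Next I would check that $g$ is a ternary modulus of $f$. Fix $k \in \Nat$, $\alpha \in \TTree$, and $x \in \UInt$ with $|\Phi(\alpha) - x| \leq 2^{-g_{k}(\alpha)}$, and write $n = h_{k}(\rho(\alpha))$. By Corollary~\ref{cor:rhoPreserveEqual} we have $\Phi(\alpha) \simeq \Phi(\rho(\alpha))$, so $|\Phi(\rho(\alpha)) - x| \leq 2^{-(n+6)} < 2^{-(n+5)}$, and Proposition~\ref{prop:Quotient} yields $\gamma \in \overline{\rho(\alpha)}n$ with $x \simeq \Phi(\gamma)$. The crux is that $f^{\varphi}_{T}(\gamma)$ and $f^{\varphi}_{T}(\rho(\alpha))$ have the same $k$-th term: since $\gamma$ and $\rho(\alpha)$ agree on their first $n = h_{k}(\rho(\alpha))$ entries, unwinding the definition of $h_{k}$ gives $h_{k}(\gamma) = h_{k}(\rho(\alpha)) = n$, hence $\overline{\gamma}h_{k}(\gamma) = \overline{\rho(\alpha)}h_{k}(\rho(\alpha))$, and therefore
\[
  \varphi(\overline{\gamma}h_{k}(\gamma)) \dotminus 1
  = \varphi(\overline{\rho(\alpha)}h_{k}(\rho(\alpha))) \dotminus 1 =: \RInt^{*},
\]
an element of $\RatInt$ with $\lh{\RInt^{*}} \leq 2^{-k}$ by the definition of $h_{k}$. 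Since the real number represented by a shrinking sequence lies in each of its terms (readily from the nesting of the intervals and Proposition~\ref{prop:OrderBijectLoebReal}), both $F(f^{\varphi}_{T}(\gamma))$ and $F(f^{\varphi}_{T}(\rho(\alpha)))$ lie in $\RInt^{*}$, so their distance is at most $2^{-k}$. Finally, since $f$ respects $\simeq$ and is induced by $\varphi$, using $f(\Phi(\alpha)) = f(\Phi(\rho(\alpha))) \simeq F(f^{\varphi}_{T}(\rho(\alpha)))$ and $f(x) = f(\Phi(\gamma)) \simeq F(f^{\varphi}_{T}(\gamma))$ we obtain
\[
  |f(\Phi(\alpha)) - f(x)|
  \simeq \bigl| F(f^{\varphi}_{T}(\rho(\alpha))) - F(f^{\varphi}_{T}(\gamma)) \bigr|
  \leq 2^{-k}.
\]
Thus $g$ is a continuous ternary modulus of $f$, and Proposition~\ref{prop:EquivModTernMod} gives the claim.

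The step I expect to be most delicate is the passage from the metric bound $|\Phi(\alpha) - x| \leq 2^{-g_{k}(\alpha)}$ to a genuine \emph{path} $\gamma$ sharing a long initial segment with a canonical representative of $\Phi(\alpha)$: this is exactly the uniform quotient property of Proposition~\ref{prop:Quotient}, and it is the reason $\alpha$ must first be replaced by $\rho(\alpha)$ (for a general $\alpha$ sitting at a ``boundary'' position the naive statement fails). Everything else — the continuity of $g_{k}$, the stabilisation of the $k$-th term of $f^{\varphi}_{T}$ once finitely many coordinates of the argument are fixed, and the bookkeeping between the shrinking-sequence and regular-sequence presentations of $\Real$ via $F$ — rests on results already established in the excerpt.
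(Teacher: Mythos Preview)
Your proof is correct and follows essentially the same route as the paper: reduce to a continuous ternary modulus via Proposition~\ref{prop:EquivModTernMod}, set $g_{k}(\alpha) = h_{k}(\rho(\alpha)) + 6$, and use the quotient property (Proposition~\ref{prop:Quotient}) applied to $\rho(\alpha)$ to find $\gamma$ sharing the relevant initial segment, so that the $k$-th terms of $f^{\varphi}_{T}$ coincide. The only cosmetic differences are that the paper argues directly for $f_{\varphi}$ and phrases the final estimate as $\lh{f^{\varphi}_{T}(\rho(\alpha)) - f^{\varphi}_{T}(\gamma)} \leq 2^{-k}$ (since the two shrinking sequences share a common $k$-th term of length $\leq 2^{-k}$), whereas you translate through $F$ and work with a general $x \in \UInt$ rather than $\Phi(\beta)$.
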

\begin{proof}
  By Proposition \ref{prop:EquivModTernMod},
  it suffices to show that the function $f_{\varphi}$ induced by a
  code $\varphi \colon \Ter^{*} \to \Nat$ as in \eqref{eq:PhiToFunc} 
  has a continuous ternary modulus. 
  Define $g \colon \Nat \to \TTree \to \Nat$ by
  \begin{equation}
    \label{eq:CodeTernaryMod}
    g_{k}(\alpha) \defeql h_{k}(\rho(\alpha)) + 6,
  \end{equation}
  where $h_{k}(\alpha)$ 
  and $\rho \colon \TTree \to \TTree$ are  defined by \eqref{eq:hn}
  and \eqref{def:rho} respectively.
  We show that $g$ is a continuous ternary modulus of $f_{\varphi}$.
  First, note that $\rho$ is uniformly continuous.
  It is also easy to see that $h_{k} \colon \TTree \to \Nat$ is a
  continuous modulus of itself. Thus $g_{k}$ is continuous for each $k \in
  \Nat$. To see that $g$ is a ternary modulus of
  $f_{\varphi}$, it suffices to show that 
  \[
     \lh{\Phi(\alpha) - \Phi(\beta)} \leq 2^{-g_{k}(\alpha)}
    \imp \lh{f^{\varphi}_{T}(\alpha) - f^{\varphi}_{T}(\beta)} \leq
    2^{-k} 
  \]
  for all $k \in \Nat$ and $\alpha,\beta \in \TTree$.
  Fix $k \in \Nat$ and $\alpha,\beta \in \TTree$, and suppose that 
  $\lh{\Phi(\alpha) - \Phi(\beta)} \leq 2^{-g_{k}(\alpha)}$.
  Then $\lh{\Phi(\alpha) - \Phi(\beta)} < 2^{-(h_{k}(\rho(\alpha))+
5)}$. By Proposition \ref{prop:Quotient}, there exists $\gamma \in
  \overline{\rho(\alpha)}h_{k}(\rho(\alpha))$ such that
  $\Phi(\gamma) \simeq \Phi(\beta)$. By the definition of
  $h_{k}(\rho(\alpha))$, we have
    $
    h_{k}(\gamma) = h_{k}(\rho(\alpha)).
    $
 Thus
  \[
    \begin{aligned}
      f^{\varphi}_{T}(\gamma)_{k}
      =  \varphi(\overline{\gamma}h_{k}(\gamma)) \dotminus 1 
      =  \varphi(\overline{\gamma}h_{k}(\rho(\alpha))) \dotminus 1 
      =  \varphi(\overline{\rho(\alpha)}h_{k}(\rho(\alpha))) \dotminus 1 
      = f^{\varphi}_{T}(\rho(\alpha))_{k}.
    \end{aligned}
  \]
  Since $\lh{f^{\varphi}_{T}(\rho(\alpha))_{k}} \leq 2^{-k}$, we have
  $
  \lh{f^{\varphi}_{T}(\rho(\alpha)) - f^{\varphi}_{T}(\gamma)} \leq
  2^{-k}.
  $
  Therefore
  $
  \lh{f^{\varphi}_{T}(\alpha) - f^{\varphi}_{T}(\beta)} \leq
  2^{-k}
  $
  by Corollary \ref{cor:rhoPreserveEqual} and Lemma \ref{lem:Realvalued}.
\end{proof}
  
To prove the converse of Lemma \ref{lem:CodeToContMod}, we
use the following lemma.
\begin{lemma}
  \label{lem:ContItselfMonotone}
  If $f \colon \UInt \to \Real$ has a continuous ternary modulus,
  then $f$ has a continuous ternary modulus $g \colon \Nat \to \TTree
  \to \Nat$ such that for each $k \in \Nat$, 
  \begin{enumerate}
    \item\label{lem:ContItselfMonotone1} $g_{k}$ is a continuous modulus of itself,

    \item\label{lem:ContItselfMonotone2} $g_{k}(\alpha) \leq g_{k+1}(\alpha)$ for all $\alpha \in
        \TTree$.
  \end{enumerate}
\end{lemma}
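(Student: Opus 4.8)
The plan is to reuse the self-modulus construction from the proof of Proposition~\ref{prop:DFANimpliesUContMod} (which already delivers item~\ref{lem:ContItselfMonotone1}) and then to take running maxima over the parameter $k$ in order to secure the monotonicity in item~\ref{lem:ContItselfMonotone2}.

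First I would fix a continuous ternary modulus $g' \colon \Nat \to \TTree \to \Nat$ of $f$ and, for each $k \in \Nat$, set
\[
  G_{k}(\alpha) \defeql \text{the least $n \in \Nat$ such that
    $g'_{k+1}(\widehat{\overline{\alpha}n}) < n$}.
\]
As in Proposition~\ref{prop:DFANimpliesUContMod}, $G_{k}$ is well-defined because $g'_{k+1}$ is pointwise continuous: for $n$ large the value $g'_{k+1}(\widehat{\overline{\alpha}n})$ stabilises to $g'_{k+1}(\alpha)$, and hence eventually drops below $n$. The same source shows that $G_{k}$ is a continuous modulus of itself (cf.\ also \cite[Lemma~2.2]{FujiwaraKawaiBRCC}). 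That $G$ is still a ternary modulus of $f$ is verbatim the argument in Proposition~\ref{prop:DFANimpliesUContMod}: given $\alpha \in \TTree$ and $x \in \UInt$ with $|\Phi(\alpha) - x| \leq 2^{-G_{k}(\alpha)}$, Corollary~\ref{cor:UnifContPhi} places both $\Phi(\alpha)$ and $x$ within $2^{-g'_{k+1}(\widehat{\overline{\alpha}G_{k}(\alpha)})}$ of $\Phi(\widehat{\overline{\alpha}G_{k}(\alpha)})$, and two applications of the ternary modulus property of $g'$ with midpoint $\widehat{\overline{\alpha}G_{k}(\alpha)}$ give $|f(\Phi(\alpha)) - f(x)| \leq 2^{-(k+1)} + 2^{-(k+1)} = 2^{-k}$.

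Then I would put
\[
  g_{k}(\alpha) \defeql \max\left\{ G_{0}(\alpha), G_{1}(\alpha), \dots, G_{k}(\alpha) \right\}.
\]
Item~\ref{lem:ContItselfMonotone2} is immediate. For item~\ref{lem:ContItselfMonotone1}: if $\overline{\alpha}g_{k}(\alpha) = \overline{\beta}g_{k}(\alpha)$, then since $G_{j}(\alpha) \leq g_{k}(\alpha)$ for each $j \leq k$ we get $\overline{\alpha}G_{j}(\alpha) = \overline{\beta}G_{j}(\alpha)$, whence $G_{j}(\alpha) = G_{j}(\beta)$ because $G_{j}$ is a modulus of itself; taking the maximum over $j \leq k$ yields $g_{k}(\alpha) = g_{k}(\beta)$. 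In particular each $g_{k}$ is pointwise continuous, so $g$ is a continuous ternary modulus. Finally $g$ remains a ternary modulus of $f$: from $g_{k}(\alpha) \geq G_{k}(\alpha)$ the hypothesis $|\Phi(\alpha) - x| \leq 2^{-g_{k}(\alpha)}$ gives $|\Phi(\alpha) - x| \leq 2^{-G_{k}(\alpha)}$, and we conclude by the previous paragraph.

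There is no genuine obstacle here: the lemma is essentially the combination of the self-modulus trick of Proposition~\ref{prop:DFANimpliesUContMod} with a routine maximisation over $k$. The only points that need a word of care are the well-definedness of $G_{k}$ (using pointwise continuity of $g'_{k+1}$) and the fact that the running maximum preserves the ``modulus of itself'' property, both of which follow directly from the definitions.
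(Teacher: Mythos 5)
Your proposal is correct and follows essentially the same route as the paper: first obtain a ternary modulus in which each $g_{k}$ is a modulus of itself (the paper invokes the ternary analogue of Lemma~\ref{lem:ContModItself}, whose proof is exactly the ``least $n$ with $g'_{k+1}(\widehat{\overline{\alpha}n})<n$'' construction you spell out), and then take running maxima over $k$, checking that this preserves both the self-modulus property and the modulus-of-$f$ property. The only difference is that you inline the first step rather than citing the lemma, which changes nothing of substance.
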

\begin{proof}
  Suppose that $f \colon \UInt \to \Real$ has a continuous ternary
  modulus.
  By Lemma~\ref{lem:ContModItself}, 
  $f$ has a continuous ternary modulus $g \colon \Nat \to \TTree
  \to \Nat$ which satisfies \ref{lem:ContItselfMonotone1}.
  Define $G \colon \Nat \to \TTree \to \Nat$ by 
  \[
    G_{k}(\alpha) \defeql \max \left\{ g_{i}(\alpha) \mid i \leq
  k\right\},
  \]
  which clearly satisfies \ref{lem:ContItselfMonotone2}.
  It is also easy to see that $G$ is a modulus of $f$. 
  To see that $G$ is a modulus of itself, let $k \in \Nat$ and
  $\alpha, \beta \in \TTree$, and suppose that 
    $
    \overline{\alpha}G_{k}(\alpha) = \overline{\beta}G_{k}(\alpha).
    $
  Then, $\overline{\alpha}g_{i}(\alpha) =
  \overline{\beta}g_{i}(\alpha)$ for all $i \leq k$. Since
  $g_{i}\; (i \leq k)$ is a modulus of itself, we have
  $g_{i}(\alpha) = g_{i}(\beta)$ for all $i \leq k$. Thus
  $G_{k}(\alpha) = G_{k}(\beta)$.
\end{proof}

\begin{lemma}
  \label{lem:ContModToCode}
  If $f \colon \UInt \to \Real$ has a continuous modulus, then
  $f$ is induced by a code of a continuous function.
\end{lemma}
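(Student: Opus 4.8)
The plan is to exhibit an explicit code $\varphi\colon\TSeq\to\Nat$ inducing $f$. By Proposition~\ref{prop:EquivModTernMod} and Lemma~\ref{lem:ContItselfMonotone} we may assume $f$ has a continuous ternary modulus $g\colon\Nat\to\TTree\to\Nat$ in which each $g_k$ is a modulus of itself and $g_k(\alpha)\le g_{k+1}(\alpha)$. For $s\in\TSeq$ put $\widehat s=s*0^{\omega}\in\TTree$. The estimate driving the construction is: \emph{if $\lh{s}\ge g_k(\widehat s)$, then $|f(\Phi(\widehat s))-f(\Phi(\alpha))|\le 2^{-k}$ for every $\alpha\in s$}; indeed for $\alpha\in s$ we have $\overline{\alpha}\lh{s}=s=\overline{\widehat s}\lh{s}$, so $|\Phi(\alpha)-\Phi(\widehat s)|\le 2^{-\lh{s}}\le 2^{-g_k(\widehat s)}$ by Corollary~\ref{cor:UnifContPhi}, and Definition~\ref{def:ModonTTree} then applies with $\widehat s$ in the role of the first argument. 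Consequently, writing $r_s^k:=f(\Phi(\widehat s))_{k+2}$ (a rational with $|f(\Phi(\widehat s))-r_s^k|\le 2^{-(k+2)}$, cf.\ Lemma~\ref{lem:RepresentationApprox}), the rational interval $J_k(s):=[\,r_s^k-2^{-(k-1)},\,r_s^k+2^{-(k-1)}\,]$ contains $f(\Phi(\alpha))$ in the order of $\Real$ for every $\alpha\in s$.

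For $s$ with $g_0(\widehat s)\le\lh{s}$ let $k(s)$ be the largest $k\le\lh{s}$ with $g_k(\widehat s)\le\lh{s}$. Put $\varphi(s):=0$ if $g_0(\widehat s)>\lh{s}$, and otherwise let $\varphi(s)$ be the code of
\[
  I(s)\;:=\;\bigcap\;\bigl\{\,J_{k(u)}(u)\;:\;u\preccurlyeq s,\ g_0(\widehat u)\le\lh{u}\,\bigr\}
\]
shifted by $1$. Since $g_k$ is a modulus of itself, $g_k(\widehat s)\le\lh{s}$ forces $g_k(\widehat{s*\seq{i}})=g_k(\widehat s)$, so the predicate ``$g_0(\widehat u)\le\lh{u}$'' is inherited by extensions; hence $\varphi(s)\neq0\Rightarrow\varphi(s*\seq{i})\neq0$ and $k(s)\le k(s*\seq{i})$. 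Also every $u$ occurring in $I(s)$ has $\widehat s\in u$, so $J_{k(u)}(u)\ni f(\Phi(\widehat s))$, whence $I(s)$ is a nonempty rational interval, i.e.\ (C1). Moreover $I(s*\seq{i})=I(s)\cap J_{k(s*\seq{i})}(s*\seq{i})\sqsubseteq I(s)$, which is (C3); and for (C2), once $n\ge\max\{k+2,g_{k+2}(\alpha)\}$ one gets $k(\overline{\alpha}n)\ge k+2$, so $\lh{I(\overline{\alpha}n)}\le\lh{J_{k(\overline{\alpha}n)}(\overline{\alpha}n)}=2^{-(k(\overline{\alpha}n)-2)}\le 2^{-k}$. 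Finally, for any $\alpha$ and any $u\preccurlyeq\overline{\alpha}n$ with $g_0(\widehat u)\le\lh{u}$ we have $\alpha\in u$, hence $f(\Phi(\alpha))\in J_{k(u)}(u)$ by the estimate; intersecting over such $u$, $f(\Phi(\alpha))\in I(\overline{\alpha}n)$ whenever $\varphi(\overline{\alpha}n)\neq0$.

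For (C4), let $\varphi(s)\neq0$, $\varphi(t)\neq0$ and $\RInt_s\approx\RInt_t$, and pick a rational $p\in\RInt_s\cap\RInt_t\subseteq[0,1]$. Each $\RInt_u$ is the union of the intervals of its three immediate successors, so a greedy descent (always choosing the least successor whose interval contains $p$) yields $\alpha\in s$ and $\beta\in t$ with $\RInt_{\overline{\alpha}m}\ni p$ and $\RInt_{\overline{\beta}m}\ni p$ for all large $m$, whence $\Phi(\alpha)\simeq p\simeq\Phi(\beta)$. By the last sentence of the previous paragraph, $f(p)\simeq f(\Phi(\alpha))\in I(s)$ and $f(p)\simeq f(\Phi(\beta))\in I(t)$, so the rational intervals $I(s)$ and $I(t)$ both contain the real $f(p)$, and therefore $I(s)\approx I(t)$. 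Hence $\varphi$ is a code. That it induces $f$ is now immediate: $f^{\varphi}_T(\alpha)$ has $n$-th term $I(\overline{\alpha}h_n(\alpha))\ni f(\Phi(\alpha))$, while by Proposition~\ref{prop:OrderBijectLoebReal} the $n$-th term of $G(f(\Phi(\alpha)))$ is $[\,f(\Phi(\alpha))_{n+1}-2^{-(n+1)},\,f(\Phi(\alpha))_{n+1}+2^{-(n+1)}\,]\ni f(\Phi(\alpha))$; two rational intervals with a common real point overlap, so $f^{\varphi}_T(\alpha)_n\approx G(f(\Phi(\alpha)))_n$ for all $n$, i.e.\ $G(f(\Phi(\alpha)))\simeq f^{\varphi}_T(\alpha)$.

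The main obstacle, and the reason the interval attached to $s$ is an \emph{intersection} of the local intervals $J_{k(u)}(u)$ rather than $J_{k(s)}(s)$ itself, is property (C3): the intervals $J_{k(s)}(s)$ are re-centred at $f(\Phi(\widehat s))$ and need not be nested as $s$ grows, because the oscillation of $f$ over a successor cell of $\RInt_s$ need not decrease even though the cell shrinks. Intersecting with the intervals of all ancestors restores nesting automatically and costs nothing elsewhere, since each $J_{k(u)}(u)$ along a branch already contains $f(\Phi(\alpha))$, so both (C4) and the ``induced'' clause still go through.
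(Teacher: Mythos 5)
Your proof is correct, but it constructs the code by a genuinely different mechanism than the paper. The paper also reduces to a ternary modulus $g$ with each $g_k$ a modulus of itself and monotone in $k$, but it then assigns to $s$ (with $k_s\neq\bot$) a \emph{single} interval of the carefully tuned radius $7\cdot 2^{-k_s}$ centred at the rational $f(\Phi(\widebreve{\overline{s}g_{k_s}(\widebreve{s})}))_{k_s}$ — crucially evaluated at the truncation $\overline{s}g_{k_s}(\widebreve{s})$, so that when $k_s=k_{s*\seq{i}}$ the interval does not change at all, and when $k_s<k_{s*\seq{i}}$ a triangle-inequality computation bounds the drift of the centre by $7\cdot 2^{-(k_s+1)}$, which exactly matches the halving of the radius and yields (C3) directly. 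You instead take the naive interval $J_{k(s)}(s)$ around $f(\Phi(\widehat{s}))$ and \emph{force} nesting by intersecting along the branch; this trades the paper's delicate constant-chasing for a small amount of bookkeeping (one must check, as you do, that all the intervals being intersected share the real point $f(\Phi(\widehat{s}))$, so the intersection is a legitimate element of $\RatInt$, and that the defining predicate is inherited by extensions so the intersection telescopes). Your (C4) argument is also different: the paper runs a six-term triangle inequality through a common point $x\in\RInt_s\cap\RInt_t$, whereas you produce paths $\alpha\in s$, $\beta\in t$ converging to a common rational $p$ by greedy descent (using that $\RInt_u$ is the union of its three successor intervals) and then invoke the principle that two rational intervals containing a common real must satisfy $\approx$; the same principle gives you the ``induced'' clause in one line. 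Two cosmetic points: the bound $\lvert f(\Phi(\widehat{s}))-f(\Phi(\widehat{s}))_{k+2}\rvert\le 2^{-(k+2)}$ is just regularity of the sequence $f(\Phi(\widehat{s}))$, not Lemma~\ref{lem:RepresentationApprox}; and in (C2) the hypothesis you actually need is $g_{k+2}(\widehat{\overline{\alpha}n})\le n$, which follows from $n\ge g_{k+2}(\alpha)$ only via the modulus-of-itself property — worth making explicit, though it is implicit in your setup.
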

\begin{proof}
  Suppose that $f \colon \UInt \to \Real$ has a continuous modulus.
  By Proposition~\ref{prop:EquivModTernMod} and Lemma~\ref{lem:ContItselfMonotone}, we may assume that 
  $f$ has a continuous ternary modulus $g \colon \Nat \to \TTree \to
  \Nat$ which satisfies \ref{lem:ContItselfMonotone1} and
  \ref{lem:ContItselfMonotone2} of Lemma \ref{lem:ContItselfMonotone}.

  For each $s \in \Ter^{*}$, define $k_{s} \in \Nat + \left\{ \bot
  \right\}$ by
  \[
    k_{s} 
    \defeql
    \begin{cases}
     \bot  & \text{if $\forall k \leq \lh{s} \left( g_{k}(\widebreve{s})
       > \lh{s}\right)$}, \\
      \text{the largest $k \leq \lh{s}$ such that $
      g_{k}(\widebreve{s}) \leq \lh{s}$} & \text{otherwise}.
    \end{cases}
  \]
  Define a function $\varphi \colon \Ter^{*} \to \Nat$ by
  \begin{equation}
    \label{def:ContModToCode}
    \varphi(s)
    \defeql 
    \begin{cases}
      0 & \text{if $k_{s} = \bot$},\\
      1 + \left(
      f(\Phi(\widebreve{\overline{s}g_{k_{s}}(\widebreve{s})}))_{k_{s}} 
      - 7 \cdot 2^{-k_{s}}, 
      f(\Phi(\widebreve{\overline{s}g_{k_{s}}(\widebreve{s})}))_{k_{s}} 
      + 7 \cdot 2^{-k_{s}} \right)
      & \text{otherwise}.
    \end{cases}
  \end{equation}
  We show that $\varphi$ is a code. 
  The property \ref{def:CodeCont1} is obvious.
  For \ref{def:CodeCont2}, let $k \in \Nat$  
  and $\alpha \in \TTree$. Since $g_{k+4}$ is continuous, there exists
  $n \in \Nat$ such that $g_{k+4}(\widebreve{\overline{\alpha}n}) \leq n$ and
  $k + 4 \leq n$. Then $k_{\overline{\alpha}n} \neq \bot$ 
  and $k + 4 \leq k_{\overline{\alpha}n}$. Thus
  $\varphi(\overline{\alpha}n) \neq 0$
  and $\lh{\varphi(\overline{\alpha}n) \dotminus 1}
  \leq 14 \cdot 2^{-k_{\overline{\alpha}n}} \leq 14 \cdot 2^{-(k + 4)} < 2^{-k}$.

  For \ref{def:CodeCont3}, let $s \in \Ter^{*}$ and
  $i \in \Ter$, and suppose that $\varphi(s) \neq 0$.
  Put $t = s*\seq{i}$. Since $g_{k_{s}}(\widebreve{s}) \leq
  \lh{s}$ and $g_{k_{s}}$ is a modulus of itself, we have
  $g_{k_s}(\widebreve{s}) = g_{k_s}(\widebreve{t}) \leq \lh{t}$.
  Hence $\varphi(t) \neq 0$ and $k_{s} \leq k_{t}$.
  We distinguish two cases:
  \smallskip

  \noindent\emph{Case $k_{s} = k_{t}$}:
  Then, $g_{k_{s}}(\widebreve{s}) = g_{k_{t}}(\widebreve{t})$, so
  $\varphi(s) \dotminus 1 = \varphi(t) \dotminus 1$.

  \smallskip
  \noindent\emph{Case $k_{s} < k_{t}$}: Then, 
   $g_{k_{s}}(\widebreve{s}) = g_{k_{s}}(\widebreve{t}) 
   \leq g_{k_{t}}(\widebreve{t})$ by the monotonicity of $g$ on the
   first argument. Thus
      $
      \overline{s}g_{k_{s}}(\widebreve{s}) \preccurlyeq 
      \overline{t}g_{k_{t}}(\widebreve{t}),
      $
   so by Lemma \ref{lem:RepresentationApprox}, we have
   \begin{align*}
     \lh{\Phi(\widebreve{s}) -
     \Phi(\widebreve{\overline{t}g_{k_{t}}(\widebreve{t})}) }
     &\leq 
     \lh{\Phi(\widebreve{s}) -
     \Phi(\widebreve{\overline{s}g_{k_{s}}(\widebreve{s})})}
     + 
     \lh{\Phi(\widebreve{\overline{s}g_{k_{s}}(\widebreve{s})})
     - \Phi(\widebreve{\overline{t}g_{k_{t}}(\widebreve{t})})} \\
     &\leq 
     2^{-(g_{k_{s}}(\widebreve{s}) + 1)}
     + 
     2^{-(g_{k_{s}}(\widebreve{s}) + 1)}
     = 
     2^{-g_{k_{s}}(\widebreve{s})}.
   \end{align*}
   Since $g$ is a modulus of $f$, 
   \begin{align*}
     &\lh{f(\Phi(\widebreve{\overline{s}g_{k_{s}}(\widebreve{s})}))_{k_{s}}
     -
     f(\Phi(\widebreve{\overline{t}g_{k_{t}}(\widebreve{t})}))_{k_{t}}}
     \\
     &\leq 
     \lh{f(\Phi(\widebreve{\overline{s}g_{k_{s}}(\widebreve{s})}))_{k_{s}}
     - f(\Phi(\widebreve{\overline{s}g_{k_{s}}(\widebreve{s})}))}
     +
     \lh{f(\Phi(\widebreve{\overline{s}g_{k_{s}}(\widebreve{s})}))
     - f(\Phi(\widebreve{s}))} \\
     &\quad\qquad\qquad +
     \lh{f(\Phi(\widebreve{s}))
     - f(\Phi(\widebreve{\overline{t}g_{k_{t}}(\widebreve{t})}))}
     +
     \lh{f(\Phi(\widebreve{\overline{t}g_{k_{t}}(\widebreve{t})}))
     -
     f(\Phi(\widebreve{\overline{t}g_{k_{t}}(\widebreve{t})}))_{k_{t}}}
     \\
     &\leq 2^{-k_{s}} + 2^{-k_{s}} + 2^{-k_{s}} + 2^{-k_{t}} \\
     &\leq  3 \cdot 2^{-k_{s}} + 2^{-(k_{s} + 1)} 
     = 7 \cdot
     2^{-(k_{s} + 1)},
   \end{align*}
   where the last $\leq$ follows from $k_{s} < k_{t}$.
   Then
   \[
     \begin{aligned}
       f(\Phi(\widebreve{\overline{s}g_{k_s}(\widebreve{s})}))_{k_s} - 7 \cdot 2^{-k_{s}} 
       &\leq 
       f(\Phi(\widebreve{\overline{s}g_{k_t}(\widebreve{t})}))_{k_t} - 7 \cdot
       2^{-(k_{s} + 1)}  \\
       &\leq
       f(\Phi(\widebreve{\overline{s}g_{k_t}(\widebreve{t})}))_{k_t} - 7 \cdot
       2^{-k_{t}},
     \end{aligned}
   \]
   and similarly
   $f(\Phi(\widebreve{\overline{s}g_{k_t}(\widebreve{t})}))_{k_t} + 7 \cdot
   2^{-k_{t}} \leq
   f(\Phi(\widebreve{\overline{s}g_{k_s}(\widebreve{s})}))_{k_s} + 7 \cdot 2^{-k_{s}}$.
   Hence $\varphi(t) \dotminus 1 \sqsubseteq
   \varphi(s) \dotminus 1$.

   For \ref{def:CodeCont4}, let $s, t \in
   \Ter^{*}$ such that $\varphi(s) \neq 0$, $\varphi(t) \neq 0$,
   and $\RInt_s \approx \RInt_t$. 
   Then, there exists $x \in \RInt_s \cap \RInt_t$ such that 
   $\lh{x - \Phi(\widebreve{s})} \leq 2^{-(\lh{s} + 1)}$
   and 
   $\lh{x - \Phi(\widebreve{t})} \leq 2^{-(\lh{t} + 1)}$.
   Then
   $\lh{x - \Phi(\widebreve{s})} \leq 2^{-g_{k_{s}}(\widebreve{s})}$
   and 
   $\lh{x - \Phi(\widebreve{t})} \leq
   2^{-g_{k_{t}}(\widebreve{t})}$ by the definitions of $k_{s}$ and
   $k_{t}$.
   Since $g$ is a modulus of $f$, we have
   $\lh{f(x) - f(\Phi(\widebreve{s}))} \leq 2^{-k_{s}}$
   and 
   $\lh{f(x) - f(\Phi(\widebreve{t}))} \leq 2^{-k_{t}}$.
   Thus
   \[
    \begin{aligned}
   &\lh{f(\Phi(\widebreve{\overline{s}g_{k_{s}}(\widebreve{s})}))_{k_{s}}
   -
   f(\Phi(\widebreve{\overline{t}g_{k_{t}}(\widebreve{t})}))_{k_{t}}}
   \\
   &\leq
   \lh{f(\Phi(\widebreve{\overline{s}g_{k_{s}}(\widebreve{s})}))_{k_{s}}
   -
   f(\Phi(\widebreve{\overline{s}g_{k_{s}}(\widebreve{s})}))
   }
   +
   \lh{f(\Phi(\widebreve{\overline{s}g_{k_{s}}(\widebreve{s})}))
   - 
   f(\Phi(\widebreve{s}))}  \\
   &\qquad
   +
   \lh{f(\Phi(\widebreve{s})) - f(x)}
   +
   \lh{f(x) - f(\Phi(\widebreve{t}))}\\
   &\qquad\qquad
   +
   \lh{f(\Phi(\widebreve{t}))
   -
   f(\Phi(\widebreve{\overline{t}g_{k_{t}}(\widebreve{t})}))}
   + 
   \lh{f(\Phi(\widebreve{\overline{t}g_{k_{t}}(\widebreve{t})}))
   -
   f(\Phi(\widebreve{\overline{t}g_{k_{t}}(\widebreve{t})}))_{k_{t}}
   } \\
   &\leq 2^{-k_{s}} + 2^{-k_{s}} + 2^{-k_{s}}  + 2^{-k_{t}} +
   2^{-k_{t}} + 2^{-k_{t}} \\
   &= 3 \cdot 2^{-k_{s}} + 3 \cdot 2^{-k_{t}}.
     \end{aligned}
   \]
   Hence $\varphi(s) \dotminus 1 \approx \varphi(t) \dotminus 1$.
   Therefore $\varphi$ is a code.

   Next, we show that $\varphi$ induces $f$. To this end, it
   suffices to show that 
   \[
     f^{\varphi}_{T}(\alpha) \simeq G(f(\Phi(\alpha)))
   \]
   for all $\alpha \in \TTree$, i.e., 
   \begin{equation}
     \label{eq:lem:ContModToCode}
     \varphi(\overline{\alpha}h_{k}(\alpha)) \dotminus 1
     \approx 
     (f(\Phi(\alpha))_{k+1} - 2^{-(k+1)},f(\Phi(\alpha))_{k+1} + 2^{-(k+1)} )
   \end{equation}
   for all $\alpha \in \TTree$ and $k \in \Nat$, where
   $h_{k}(\alpha)$ is given by \eqref{eq:hn}.
   Fix $\alpha \in \TTree$ and $k \in \Nat$, and 
   put $s = \overline{\alpha}h_{k}(\alpha)$. By definition, we have
   \[
     \varphi(s) \dotminus 1
     =
     (f(\Phi(\widebreve{\overline{\alpha}g_{k_{s}}(\widebreve{s})}))_{k_{s}}
     - 7 \cdot 2^{-k_s},
     f(\Phi(\widebreve{\overline{\alpha}g_{k_{s}}(\widebreve{s})}))_{k_{s}}
     + 7 \cdot 2^{-k_s}).
   \]
   We have
   $\lh{\Phi(\alpha) - \Phi(\widebreve{s})}
   \leq 2^{-(\lh{s}+1)} \leq 2^{-g_{k_{s}}(\widebreve{s})}$
   and 
   $\lh{\Phi(\widebreve{s}) -
   \Phi(\widebreve{\overline{\alpha}g_{k_{s}}(\widebreve{s})})}
   \leq 2^{-g_{k_{s}}(\widebreve{s})}$ by Lemma
   \ref{lem:RepresentationApprox}. Since $g$ is a modulus of
    $f$, 
   \[
     \begin{aligned}
       &\lh{f(\Phi(\alpha))_{k+1} -
       f(\Phi(\widebreve{\overline{\alpha}g_{k_{s}}(\widebreve{s})}))_{k_{s}}}\\
       &\leq
       \lh{f(\Phi(\alpha))_{k+1} -
       f(\Phi(\alpha))} 
       + 
       \lh{f(\Phi(\alpha)) - f(\Phi(\widebreve{s}))} \\
       & \qquad \qquad +
       \lh{f(\Phi(\widebreve{s}))
       - f(\Phi(\widebreve{\overline{\alpha}g_{k_{s}}(\widebreve{s})}))}
       +
       \lh{f(\Phi(\widebreve{\overline{\alpha}g_{k_{s}}(\widebreve{s})}))
       -
         f(\Phi(\widebreve{\overline{\alpha}g_{k_{s}}(\widebreve{s})}))_{k_{s}}}
         \\
         &\leq 2^{-(k+1)} + 2^{-k_{s}} + 2^{-k_{s}} + 2^{-k_{s}} \\
         &= 2^{-(k+1)} + 3 \cdot 2^{-k_{s}},
       \end{aligned}
   \]
   from which \eqref{eq:lem:ContModToCode} follows.
\end{proof}

In summary, we have the following equivalence.
\begin{theorem} \label{thm:EquivLoebContWithContMod}
  A function $f \colon \UInt \to \Real$ has a continuous modulus if
  and only if $f$ is induced by a code of a continuous function.
\end{theorem}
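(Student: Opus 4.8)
The plan is to read off both implications from the two lemmas already established in this section. For the ``if'' direction, suppose $f \colon \UInt \to \Real$ is induced by a code $\varphi \colon \TSeq \to \Nat$ of a continuous function; Lemma~\ref{lem:CodeToContMod} then yields immediately that $f$ has a continuous modulus. For the ``only if'' direction, suppose $f \colon \UInt \to \Real$ has a continuous modulus; Lemma~\ref{lem:ContModToCode} then produces a code of a continuous function inducing $f$. Together these give the stated equivalence, and no further argument is needed at this point.

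It is worth noting where the real work sits. The ``if'' direction (Lemma~\ref{lem:CodeToContMod}) builds the ternary modulus $g_{k}(\alpha) \defeql h_{k}(\rho(\alpha)) + 6$ from the index function $h_{k}$ of $\varphi$ and the reduction map $\rho$, and the verification that it is indeed a modulus of $f_{\varphi}$ leans on the uniform quotient property of $\Phi$ (Proposition~\ref{prop:Quotient}) together with Corollary~\ref{cor:rhoPreserveEqual}. The ``only if'' direction (Lemma~\ref{lem:ContModToCode}) is the harder half: one first normalises the given continuous ternary modulus to a monotone one each of whose components is a modulus of itself (Lemma~\ref{lem:ContItselfMonotone}), then defines $\varphi(s)$ to encode a $7 \cdot 2^{-k_{s}}$-neighbourhood of a suitable approximation of $f$, and checks the four clauses of Definition~\ref{def:CodeCont} by routine estimates on regular sequences. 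The only subtle point there --- already handled --- is calibrating the radius of those intervals (the constant $7$) so that the nesting clause (C3) and the compatibility clause (C4) hold while the lengths are still forced to zero for (C2).
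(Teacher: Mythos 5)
Your proof is correct and is exactly the paper's proof: the theorem follows immediately by combining Lemma~\ref{lem:CodeToContMod} for the ``if'' direction and Lemma~\ref{lem:ContModToCode} for the ``only if'' direction, and your accompanying summary of where the real work lies in those two lemmas accurately reflects their content.
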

\begin{proof}
  By Lemma \ref{lem:CodeToContMod} and Lemma \ref{lem:ContModToCode}.
\end{proof}

Next, we characterise uniformly continuous functions from $\UInt$
to $\Real$ in terms of uniformly continuous codes.
\begin{definition}[{Loeb~\cite[Definition 3.2]{Loeb05}}]
  \label{def:CodeUCont}
  A code $\varphi \Ter^{*} \to \Nat$ is said to be \emph{uniformly
  continuous} if
  \begin{equation}
    \label{eq:CodeUifCont}
    \forall k \in \Nat \exists n \in \Nat \forall \alpha \in \TTree 
      \left( \varphi(\overline{\alpha}n) \neq 0 \wedge
      \lh{\varphi(\overline{\alpha}n) \dotminus 1} \leq 2^{-k} \right).
  \end{equation}
\end{definition}
\begin{lemma}
  \label{lem:CodeToUCont}
  If $f \colon \UInt \to \Real$ is induced by a uniformly
  continuous code, then $f$ is uniformly continuous.
\end{lemma}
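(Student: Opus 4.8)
The plan is to use Theorem~\ref{prop:UniversalQuotient} to reduce the claim to uniform continuity of $f \circ \Phi \colon \TTree \to \Real$. Since $f$ is induced by a (uniformly continuous) code $\varphi$, we have $G(f(\Phi(\alpha))) \simeq f^{\varphi}_{T}(\alpha)$ for every $\alpha \in \TTree$, so $f(\Phi(\alpha))$ represents the same real number as the shrinking sequence $f^{\varphi}_{T}(\alpha)$; because $F$ and $G$ are order isomorphisms preserving the arithmetic operations (Proposition~\ref{prop:OrderBijectLoebReal}), bounding $|f(\Phi(\alpha)) - f(\Phi(\beta))|$ amounts to bounding the distance between the real numbers determined by $f^{\varphi}_{T}(\alpha)$ and $f^{\varphi}_{T}(\beta)$.

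Fix $k \in \Nat$. First I would invoke the uniform continuity of $\varphi$, i.e.\ \eqref{eq:CodeUifCont}, with parameter $k+1$, obtaining $N \in \Nat$ such that $\varphi(\overline{\gamma}N) \neq 0$ and $\lh{\varphi(\overline{\gamma}N) \dotminus 1} \leq 2^{-(k+1)}$ for every $\gamma \in \TTree$; I claim $k \mapsto N$ is a modulus of uniform continuity of $f \circ \Phi$. So let $\alpha, \beta \in \TTree$ with $\overline{\alpha}N = \overline{\beta}N$, and set $\RInt \defeql \varphi(\overline{\alpha}N) \dotminus 1 = \varphi(\overline{\beta}N) \dotminus 1$, a nonempty rational interval of length $\leq 2^{-(k+1)}$. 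Because $N$ witnesses the condition defining $h_{k+1}(\alpha)$, the least such witness satisfies $h_{k+1}(\alpha) \leq N$, and similarly $h_{k+1}(\beta) \leq N$; iterating \ref{def:CodeCont3} from $\overline{\alpha}h_{k+1}(\alpha)$ up to $\overline{\alpha}N$ then gives $\RInt \sqsubseteq \varphi(\overline{\alpha}h_{k+1}(\alpha)) \dotminus 1 = f^{\varphi}_{T}(\alpha)_{k+1}$, and likewise $\RInt \sqsubseteq f^{\varphi}_{T}(\beta)_{k+1}$.

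Now $f^{\varphi}_{T}(\alpha)_{k+1}$ and $f^{\varphi}_{T}(\beta)_{k+1}$ are rational intervals of length $\leq 2^{-(k+1)}$ that both contain the nonempty interval $\RInt$, so their union is a rational interval of length at most $2^{-k}$. Since the real number determined by a shrinking sequence lies in every interval of that sequence (by the nesting property \ref{def:LoebRea1} together with the definition of $F$), the real numbers $f^{\varphi}_{T}(\alpha)$ and $f^{\varphi}_{T}(\beta)$ both lie in this union, hence $|f^{\varphi}_{T}(\alpha) - f^{\varphi}_{T}(\beta)| \leq 2^{-k}$; by the reduction in the first paragraph, $|f(\Phi(\alpha)) - f(\Phi(\beta))| \leq 2^{-k}$. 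Thus $f \circ \Phi$ is uniformly continuous, and so is $f$ by Theorem~\ref{prop:UniversalQuotient}.

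The genuinely load-bearing step is combining \eqref{eq:CodeUifCont} with \ref{def:CodeCont3}: that whenever $\overline{\alpha}N = \overline{\beta}N$, the single small interval $\varphi(\overline{\alpha}N)\dotminus 1$ is nested below the $(k+1)$-th approximating intervals of both $f^{\varphi}_{T}(\alpha)$ and $f^{\varphi}_{T}(\beta)$, which pins them together. The remaining points — transferring the estimate between $\LoebReal$ and $\Real$ through $F$ and $G$ (equivalently, an elementary computation with the interval arithmetic on $\LoebReal$), and the fact that a shrinking sequence's real lies in each of its intervals — are routine, though they should be spelled out.
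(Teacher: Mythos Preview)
Your proof is correct, but it proceeds along a different line from the paper's. The paper recycles the ternary modulus $g_{k}(\alpha) = h_{k}(\rho(\alpha)) + 6$ constructed in the proof of Lemma~\ref{lem:CodeToContMod}, observes that $\rho$ is uniformly continuous and that each $h_{k}$ is uniformly continuous when $\varphi$ satisfies~\eqref{eq:CodeUifCont}, and then invokes Lemma~\ref{lem:UCMod} directly. You instead reduce via Theorem~\ref{prop:UniversalQuotient} to uniform continuity of $f \circ \Phi$ and establish that by a direct interval estimate: the single small interval $\varphi(\overline{\alpha}N)\dotminus 1$ is, by iterated~\ref{def:CodeCont3}, nested inside both $f^{\varphi}_{T}(\alpha)_{k+1}$ and $f^{\varphi}_{T}(\beta)_{k+1}$, pinning the two values within $2^{-k}$.

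The trade-off: the paper's route is more modular (it reuses the ternary modulus already built and defers the passage from $\TTree$ to $\UInt$ to Lemma~\ref{lem:UCMod}), while your route is more self-contained once Theorem~\ref{prop:UniversalQuotient} is available and avoids re-invoking $\rho$ explicitly in this proof. Both ultimately rely on the quotient property (Proposition~\ref{prop:Quotient}): the paper through the construction of $g$ in Lemma~\ref{lem:CodeToContMod}, you through Theorem~\ref{prop:UniversalQuotient}. The informal step that ``the real determined by a shrinking sequence lies in each of its intervals'' and the transfer through $F,G$ are indeed routine, but in a fully formal write-up you would want to cash them out as an explicit inequality in $\LoebReal$ using the interval arithmetic of Definition~\ref{def:LoebReal}.
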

\begin{proof}
  Let $\varphi \colon \Ter^{*} \to \Nat$ be a uniformly continuous
  code. By Lemma \ref{lem:UCMod}, it suffices to show that the ternary
  modulus $g$ of $f_{\varphi} \colon \UInt \to \Real$ defined by \eqref{eq:CodeTernaryMod} is uniformly
  continuous. Since $\rho \colon \TTree \to \TTree$ is uniformly
  continuous, it suffices to show that the function $h_{k} \colon \TTree
  \to \Nat$ defined by \eqref{eq:hn} is uniformly continuous for each
  $k \in \Nat$. But this clearly follows from the uniform continuity of
  $\varphi$.
\end{proof}

The following is analogous to 
Lemma \ref{lem:ContItselfMonotone}.
\begin{lemma}
  \label{lem:UContItselfMonotone}
  If $f \colon \UInt \to \Real$ is uniformly continuous,
  then $f$ has a uniformly continuous ternary modulus $g \colon \Nat \to \TTree
  \to \Nat$ such that for each $k \in \Nat$, 
  \begin{enumerate}
    \item\label{lem:UContItselfMonotone1} $g_{k}$ is a continuous
      modulus of itself,

    \item\label{lem:UContItselfMonotone2} $g_{k}(\alpha) \leq
      g_{k+1}(\alpha)$ for all $\alpha \in \TTree$.
  \end{enumerate}
\end{lemma}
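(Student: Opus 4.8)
The plan is to read this as an easy strengthening of the fact that a uniformly continuous function has a uniformly continuous ternary modulus, and then to import, if desired, the two bookkeeping constructions from Lemma~\ref{lem:ContModItself} and Lemma~\ref{lem:ContItselfMonotone}.

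First I would fix a modulus of uniform continuity $\omega \colon \Nat \to \Nat$ of $f$ and, replacing $\omega$ by $k \mapsto \max\{\omega(i) \mid i \le k\}$, assume without loss of generality that $\omega$ is nondecreasing; this is harmless because enlarging $\omega$ only strengthens the hypothesis in $|x - y| \le 2^{-\omega(k)} \imp |f(x) - f(y)| \le 2^{-k}$. Then I would define $g \colon \Nat \to \TTree \to \Nat$ by the \emph{constant} assignment $g_k(\alpha) \defeql \omega(k)$. The verification then splits into three routine checks: (i) $g$ is a ternary modulus of $f$ in the sense of Definition~\ref{def:ModonTTree}, since $|\Phi(\alpha) - x| \le 2^{-g_k(\alpha)} = 2^{-\omega(k)}$ gives $|f(\Phi(\alpha)) - f(x)| \le 2^{-k}$ by uniform continuity of $f$; (ii) each $g_k$ is constant, hence uniformly continuous (with modulus $0$) and trivially a continuous modulus of itself, which yields \ref{lem:UContItselfMonotone1}; (iii) $g_k(\alpha) = \omega(k) \le \omega(k+1) = g_{k+1}(\alpha)$ by monotonicity of $\omega$, which yields \ref{lem:UContItselfMonotone2}.

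If one instead wants a proof that literally mirrors that of Lemma~\ref{lem:ContItselfMonotone}, one starts from \emph{any} uniformly continuous ternary modulus of $f$ (the constant one above will do), applies the construction of Lemma~\ref{lem:ContModItself} to secure the ``modulus of itself'' property, and then applies the pointwise-maximum construction $G_k(\alpha) \defeql \max\{g_i(\alpha) \mid i \le k\}$ of Lemma~\ref{lem:ContItselfMonotone} to secure monotonicity. The only additional point to record is that both operations send uniformly continuous moduli to uniformly continuous moduli: in the first case because the value produced by the Lemma~\ref{lem:ContModItself} construction is then bounded by a constant depending only on the least modulus of uniform continuity of $g_{k+1}$, so it depends only on a fixed-length initial segment of $\alpha$; in the second case because a finite maximum of uniformly continuous functions $\TTree \to \Nat$ is uniformly continuous.

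I do not expect any genuine obstacle here — the content is essentially bookkeeping, and with the constant-modulus shortcut even the preservation of uniform continuity becomes trivial, which is why I would present that argument as the main proof. The one thing worth being explicit about, rather than leaving it to the phrase ``analogous to Lemma~\ref{lem:ContItselfMonotone}'', is precisely that the two inherited constructions do not destroy uniform continuity of the modulus.
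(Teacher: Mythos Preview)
Your main argument is correct and is exactly the paper's proof: the paper defines $g_k(\alpha) \defeql \max\{\omega(i) \mid i \le k\}$, which is your monotonised $\omega$ used as a constant-in-$\alpha$ modulus, and then notes the same three trivial checks. Your second paragraph (routing through Lemma~\ref{lem:ContModItself} and Lemma~\ref{lem:ContItselfMonotone}) is a sound alternative but unnecessary, and the paper does not take it.
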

\begin{proof}
  Let $\omega \colon \Nat \to \Nat$ be a modulus of uniform
  continuity of $f$. For each $k \in \Nat$, defined $g_{k} \colon
  \TTree \to \Nat$ by 
  \[
    g_{k}(\alpha) \defeql \max \left\{ \omega(i) \mid i \leq
    k\right\}.
  \]
  Then, $g_{k}$ is trivially uniformly continuous which is a modulus of
  $f$ and of itself and is monotone on the first argument.
\end{proof}

\begin{lemma}
  \label{lem:UContToCode}
  If $f \colon \UInt\to \Real$ is uniformly continuous, then  $f$ is
  induced by a uniformly continuous code.
\end{lemma}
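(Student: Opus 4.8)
The plan is to re-run the construction from the proof of Lemma~\ref{lem:ContModToCode}, but starting from a \emph{uniformly} continuous ternary modulus rather than merely a continuous one, and then to verify the extra clause \eqref{eq:CodeUifCont}.

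Concretely, by Lemma~\ref{lem:UContItselfMonotone} we may fix a uniformly continuous ternary modulus $g \colon \Nat \to \TTree \to \Nat$ of $f$ such that each $g_{k}$ is a continuous modulus of itself and $g_{k}(\alpha) \leq g_{k+1}(\alpha)$ for all $\alpha \in \TTree$. From this $g$ we define $k_{s}$ and the function $\varphi \colon \TSeq \to \Nat$ exactly as in \eqref{def:ContModToCode}. The proof of Lemma~\ref{lem:ContModToCode} that $\varphi$ is a code (properties \ref{def:CodeCont1}--\ref{def:CodeCont4}) and that $\varphi$ induces $f$ uses the chosen ternary modulus only through the hypotheses of Lemma~\ref{lem:ContItselfMonotone}, which our $g$ satisfies; hence that argument carries over verbatim, and it remains only to establish uniform continuity of $\varphi$.

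For this, fix $k \in \Nat$. Since $g_{k+4} \colon \TTree \to \Nat$ is uniformly continuous, its range is bounded, say by $B \in \Nat$; put $n \defeql \max\{B, k+4\}$. For every $\alpha \in \TTree$ we then have $g_{k+4}(\widebreve{\overline{\alpha}n}) \leq B \leq n = \lh{\overline{\alpha}n}$ together with $k + 4 \leq n$, so $k_{\overline{\alpha}n} \neq \bot$ and $k + 4 \leq k_{\overline{\alpha}n}$. By the definition of $\varphi$ this yields $\varphi(\overline{\alpha}n) \neq 0$ and $\lh{\varphi(\overline{\alpha}n) \dotminus 1} \leq 14 \cdot 2^{-k_{\overline{\alpha}n}} \leq 14 \cdot 2^{-(k+4)} < 2^{-k}$, which is exactly \eqref{eq:CodeUifCont}.

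I do not anticipate a genuine difficulty here. The only point requiring care is the observation that the code construction and the ``$\varphi$ induces $f$'' argument of Lemma~\ref{lem:ContModToCode} depend on the input modulus solely via the two properties of Lemma~\ref{lem:ContItselfMonotone}, so none of that reasoning needs to be redone; the new uniform-continuity clause then drops out because the modulus produced by Lemma~\ref{lem:UContItselfMonotone} is uniformly continuous (indeed constant) in its second argument.
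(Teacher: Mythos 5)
Your proposal is correct and follows essentially the same route as the paper: reuse the construction \eqref{def:ContModToCode} from Lemma~\ref{lem:ContModToCode} with the modulus supplied by Lemma~\ref{lem:UContItselfMonotone}, then verify \eqref{eq:CodeUifCont} by finding a level $n$ past which $g_{k+4}(\widebreve{\overline{\alpha}n}) \leq n$ and $k+4 \leq n$, forcing $k+4 \leq k_{\overline{\alpha}n}$. The paper obtains its bound as $\max\{g_{k+4}(\widebreve{s}) \mid s \in \Ter^{N}\}$ for $N$ a modulus of uniform continuity of $g_{k+4}$, which is exactly the justification your phrase ``its range is bounded'' tacitly requires (and which is trivial here since the modulus from Lemma~\ref{lem:UContItselfMonotone} is constant in $\alpha$).
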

\begin{proof}
  Suppose that $f$ is uniformly continuous. By Lemma
  \ref{lem:UContItselfMonotone}, $f$ has a uniformly continuous
  ternary modulus $g \colon \Nat \to \TTree \to \Nat$ which satisfies
  \ref{lem:UContItselfMonotone1} and~\ref{lem:UContItselfMonotone2} of 
  Lemma \ref{lem:UContItselfMonotone}.
  By the proof of Lemma \ref{lem:ContModToCode}, it suffices to show that the code
  $\varphi$ defined by \eqref{def:ContModToCode} is uniformly
  continuous.
  Fix $k \in \Nat$. Since $g_{k+4}$ is uniformly continuous, there
  exists
  $n \in \Nat$ such that 
  \[
    \forall \alpha,\beta \in \TTree \left( \overline{\alpha}n =
    \overline{\beta}n \imp g_{k+4}(\alpha)  = g_{k+4}(\beta)\right).
  \]
  Put $M = \max \left\{ \max \left\{ g_{k+4}(\widebreve{s}) \mid s \in
  \Ter^{n}\right\}, k+4, n \right\}$.
  Let $\alpha \in \TTree$. Then
  $g_{k+4}(\widebreve{\overline{\alpha}M})
  = g_{k+4}(\widebreve{\overline{\alpha}n})
  \leq M$.
  Since $k+4 \leq M$, we have $k+4 \leq k_{\overline{\alpha}M}$.
  Thus $14 \cdot 2^{-k_{\overline{\alpha}M}} \leq 14 \cdot 2^{-(k+4)}
  < 2^{-k}$. Hence $\varphi(\overline{\alpha}M) \neq 0$
  and $\lh{\varphi(\overline{\alpha}M))\dotminus 1} \leq 2^{-k}$.
\end{proof}

In summary, we have the following equivalence.
\begin{theorem} \label{thm:EquivLoebContUCont}
  A function $f \colon \UInt \to \Real$
  is uniformly continuous if and only if $f$ is induced by
  a uniformly continuous code.
\end{theorem}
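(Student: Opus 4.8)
The plan is to obtain the equivalence by combining the two preceding lemmas, one for each implication, in exact parallel with the way Theorem~\ref{thm:EquivLoebContWithContMod} was derived from Lemma~\ref{lem:CodeToContMod} and Lemma~\ref{lem:ContModToCode}. Concretely, the ``if'' direction is Lemma~\ref{lem:CodeToUCont} and the ``only if'' direction is Lemma~\ref{lem:UContToCode}, so the proof of the theorem itself is a one-line citation of both.

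For the ``if'' direction I would start from a function $f$ induced by a uniformly continuous code $\varphi$. By Lemma~\ref{lem:CodeToUCont} such an $f$ is uniformly continuous; the substance of that lemma is that the ternary modulus $g_{k}(\alpha) \defeql h_{k}(\rho(\alpha)) + 6$ of $f_{\varphi}$ constructed in the proof of Lemma~\ref{lem:CodeToContMod} is not merely continuous but uniformly continuous once $\varphi$ satisfies the uniformity clause~\eqref{eq:CodeUifCont}. This is because $\rho \colon \TTree \to \TTree$ is uniformly continuous and $h_{k} \colon \TTree \to \Nat$ inherits uniform continuity directly from~\eqref{eq:CodeUifCont}, so that Lemma~\ref{lem:UCMod} applies and yields uniform continuity of $f$.

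For the ``only if'' direction I would start from a uniformly continuous $f$ with a modulus of uniform continuity $\omega$. By Lemma~\ref{lem:UContItselfMonotone} one may first pass to a uniformly continuous ternary modulus $g$ with each $g_{k}$ a continuous modulus of itself and $g_{k}(\alpha) \leq g_{k+1}(\alpha)$ --- for instance $g_{k}(\alpha) \defeql \max\{ \omega(i) \mid i \leq k \}$. Then the code $\varphi$ built from this $g$ by~\eqref{def:ContModToCode} induces $f$ by the same verification of clauses (C1)--(C4) as in the proof of Lemma~\ref{lem:ContModToCode}, and is moreover uniformly continuous: given $k$, uniform continuity of $g_{k+4}$ supplies a single $n$ bounding $g_{k+4}(\widebreve{\overline{\alpha}n})$ uniformly in $\alpha$, and with $M \defeql \max\{ \max\{ g_{k+4}(\widebreve{s}) \mid s \in \Ter^{n} \}, k+4, n \}$ one gets $\varphi(\overline{\alpha}M) \neq 0$ and $\lh{\varphi(\overline{\alpha}M) \dotminus 1} \leq 2^{-k}$ for every $\alpha$, which is exactly~\eqref{eq:CodeUifCont}. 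This is the content of Lemma~\ref{lem:UContToCode}.

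The main obstacle, were one to prove the theorem from scratch rather than invoke the lemmas, would be the ``only if'' direction: constructing a code $\varphi$ out of a self-modulus $g$ and checking all of (C1)--(C4) together with the extra uniformity clause~\eqref{eq:CodeUifCont}. Since that analysis is already carried out in the proofs of Lemma~\ref{lem:ContModToCode} and Lemma~\ref{lem:UContToCode}, here the theorem reduces to combining Lemma~\ref{lem:CodeToUCont} and Lemma~\ref{lem:UContToCode}.
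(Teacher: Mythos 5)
Your proposal is correct and matches the paper exactly: the theorem is proved there in one line by citing Lemma~\ref{lem:CodeToUCont} for the ``if'' direction and Lemma~\ref{lem:UContToCode} for the ``only if'' direction. Your additional summaries of what those lemmas do are accurate but not needed for the theorem's proof itself.
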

\begin{proof}
  By Lemma \ref{lem:CodeToUCont} and Lemma \ref{lem:UContToCode}.
\end{proof}

\subsection*{Acknowledgements}
Part of this work was carried out in October 2019 at the
Zukunftskolleg of the University of Konstanz, which was hosting the
first author as a visiting fellow. The authors thank the institute for
their support and hospitality. The first author was supported by JSPS KAKENHI
Grant Numbers JP18K13450, JP19J01239, and JP20K14354. 
The second author was supported by JSPS KAKENHI Grant Number JP20K14352.

\end{document}